\newcommand\redcircle[1]{\filldraw[fill=white, draw=red] #1 circle (2pt)}
\newcommand\bluedot[1]{\filldraw[blue] #1 circle (2pt)}
\newcommand\greensquare[1]{\filldraw[green,xshift=-2pt,yshift=-2pt] #1 rectangle ++(4pt,4pt)}
\newcommand\opensquare[1]{\filldraw[color=violet,fill=white,xshift=-2pt,yshift=-2pt] #1 rectangle ++(4pt,4pt)}
\newcommand\squarelabel[1]{$\scriptstyle{#1}$}
\newcommand\dotlabel[1]{$\scriptstyle{#1}$}
\newcommand\circled[1]{\tikz[baseline=(char.base)]{
            \node[shape=circle,draw,inner sep=2pt] (char) {#1};}}
\newcommand\cbubble[2]{
  \begin{tikzpicture}[anchorbase]
    \draw[->] (0,0.3) arc (90:-270:0.3);
    \bluedot{(0.3,0)} node[anchor=west,color=black] {#1};
    \redcircle{(-0.3,0)} node[anchor=east,color=black] {#2};
  \end{tikzpicture}
}
\newcommand\ccbubble[2]{
  \begin{tikzpicture}[anchorbase]
    \draw[->] (0,0.3) arc (90:450:0.3);
    \bluedot{(-0.3,0)} node[anchor=east,color=black] {#1};
    \redcircle{(0.3,0)} node[anchor=west,color=black] {#2};
  \end{tikzpicture}
}
\tikzset{anchorbase/.style={>=To,baseline={([yshift=-0.5ex]current bounding box.center)}}}
\tikzset{->-/.style={decoration={
  markings,
  mark=at position #1 with {\arrow{>}}},postaction={decorate}}}
\tikzset{-<-/.style={decoration={
  markings,
  mark=at position #1 with {\arrow{<}}},postaction={decorate}}}
\crefname{eg}{Example}{Examples}
\crefname{lem}{Lemma}{Lemmas}
\crefname{theo}{Theorem}{Theorems}
\crefname{equation}{}{}
\crefname{enumi}{}{}
\newcommand\Z{\mathbb{Z}}
\newcommand\kk{\Bbbk}
\newcommand\cA{\mathcal{A}}
\newcommand\cH{\mathcal{H}}
\newcommand\Heis{\mathcal{H}\mathit{eis}}
\newcommand\bC{\mathbold{C}}
\newcommand\bsf{\mathbold{f}}
\newcommand\one{\mathbb{1}}
\newcommand{\rh}{\mathrm{Heis}}
\newcommand\tr{\mathrm{tr}}
\newcommand\op{\mathrm{op}}              
\newcommand\sQ{\mathsf{Q}}
\def\chk#1{#1^{\smash{\scalebox{.8}[1.4]{\rotatebox{90}{\textnormal{\guilsinglleft}}}}}} 
\DeclareMathOperator{\End}{End}
\DeclareMathOperator{\END}{END}
\DeclareMathOperator{\grdim}{grdim} 
\DeclareMathOperator{\Hom}{Hom}
\DeclareMathOperator{\HOM}{HOM}
\DeclareMathOperator{\id}{id}
\DeclareMathOperator{\Ind}{Ind}
\DeclareMathOperator{\Kar}{Kar}     
\DeclareMathOperator{\Res}{Res}
\newtheorem{theo}{Theorem}[section]
\newtheorem{prop}[theo]{Proposition}
\newtheorem{lem}[theo]{Lemma}
\newtheorem*{lem*}{Lemma}
\newtheorem{cor}[theo]{Corollary}
\theoremstyle{definition}
\newtheorem{defin}[theo]{Definition}
  \newcommand{\comments}[1]{
    \ \\
    {\color{red}
      \textbf{AS:} #1
    }
    \\
    }
  \newcommand{\comments}[1]{}
  \newcommand{\details}[1]{
      \ \\
      {\color{OliveGreen}
        \textbf{Details:} #1
      }
      \\
  }
  \newcommand{\details}[1]{}
\begin{document}
%

\title{Frobenius Heisenberg categorification}

\author{Alistair Savage}
\address{
  Department of Mathematics and Statistics \\
  University of Ottawa
}
\urladdr{\href{https://alistairsavage.ca}{alistairsavage.ca}, \textrm{\textit{ORCiD}:} \href{https://orcid.org/0000-0002-2859-0239}{orcid.org/0000-0002-2859-0239}}
\email{alistair.savage@uottawa.ca}

\thanks{This research was supported by Discovery Grant RGPIN-2017-03854 from the Natural Sciences and Engineering Research Council of Canada.}

\begin{abstract}
  We associate a graded monoidal supercategory $\Heis_{F,k}$ to every graded Frobenius superalgebra $F$ and integer $k$.  These categories, which categorify a broad range of lattice Heisenberg algebras, recover many previously defined Heisenberg categories as special cases.  In this way, the categories $\Heis_{F,k}$ serve as a unifying and generalizing framework for Heisenberg categorification.  Even in the case of previously defined Heisenberg categories, we obtain new, more efficient, presentations of these categories, based on an approach of Brundan.  When $k=0$, our construction yields new versions of the affine oriented Brauer category depending on a graded Frobenius superalgebra.
\end{abstract}

\subjclass[2010]{Primary 18D10; Secondary 17B10, 17B65}
\keywords{Categorification, graded Frobenius superalgebra, Heisenberg algebra, diagrammatic calculus}

\maketitle
\thispagestyle{empty}


\section{Introduction}

In \cite{Kho14}, Khovanov developed a graphical calculus for the induction and restriction functors arising from the tower of algebras coming from the symmetric groups.  He showed that the Grothendieck ring of the resulting monoidal category contains an infinite-dimensional Heisenberg algebra and conjectured that the two are isomorphic.  Since Khovanov's original work, his construction has been generalized to $q$-deformations \cite{LS13,BSW2}, to categories depending on a graded Frobenius superalgebra \cite{CL12,RS17}, and to higher level \cite{MS17}.  In addition, a generalization of Khovanov's conjecture \cite[Conj.~4.5]{MS17} has recently been proved in \cite[Th.~1.1]{BSW1}.

In \cite{Bru17}, Brundan introduced a new approach to Heisenberg categorification, proving that the higher level Heisenberg categories of \cite{MS17}, which include Khovanov's original category, can be defined using a smaller set of relations, including an ``inversion relation''.  This approach also shows that the affine oriented Brauer category of \cite{BCNR17} can be viewed as the level zero Heisenberg category.

In the current paper, we associate a graded monoidal supercategory $\Heis_{F,k}$ to each graded Frobenius superalgebra $F$ and integer $k$.  The Heisenberg supercategories $\Heis_{F,k}$ can simultaneously be viewed as graded Frobenius superalgebra deformations of the categories of \cite{MS17}, and as extensions of the graded monoidal supercategories of \cite{RS17} to higher level.  When $k=0$, we also obtain new Frobenius deformations of affine oriented Brauer categories.  In this way, the categories introduced in the current paper unify and generalize these previous constructions.  Our approach is inspired by the inversion relation method of \cite{Bru17}.  As a consequence, even when we specialize to the setting of \cite{RS17}, which corresponds to the choice $k=-1$, we obtain two new presentations of the Heisenberg categories defined there.  Specializing further, we obtain higher level versions of, as well as new presentations of, the categories introduced in \cite{CL12}, which are related to the geometry of the Hilbert scheme.

Fix a commutative ground ring $\kk$.  Throughout the paper, the term \emph{graded} will mean $\Z$-graded.  Let $F$ be a nonnegatively graded Frobenius superalgebra with homogeneous basis $B$ (thus, we assume $F$ is free as a $\kk$-module), Nakayama automorphism $\psi$, even trace map $\tr$, and top degree $\Delta$.  By definition, the map
\[
  F \to \Hom_\kk(F,\kk),\quad f \mapsto \left( g \mapsto (-1)^{\bar f \bar g} \tr(gf) \right),
\]
is an isomorphism, and
\begin{equation} \label{eq:Nakayama-def}
  \tr(fg)
  = (-1)^{\bar f \bar g} \tr(g \psi(f))
  = (-1)^{\bar f} \tr(g \psi(f))
  = (-1)^{\bar g} \tr(g \psi(f))
  \quad f,g \in F,
\end{equation}
where $\bar f$ denotes the parity of a homogeneous element $f \in F$.  Here and throughout the paper, when an equation involves the parity of elements, it is understood that we extended it by linearity to non-homogeneous elements.  The degree of a homogeneous element $f \in F$ will be denoted by $|f|$.  We let $\{\chk{b} : b \in B\}$ denote the left dual basis, so that
\begin{equation} \label{eq:dual-basis-def}
  \tr (\chk{a} b) = \delta_{a,b},\quad a,b \in B.
\end{equation}
Fix $k \in \Z$.  We refer the reader to \cite{BE17} for a treatment of monoidal supercategories and the diagramatic conventions used below.

\begin{defin} \label{def:H}
  The supercategory $\Heis_{F,k}$ is the strict $\kk$-linear graded monoidal supercategory defined as follows. The objects are generated by $\sQ_+$ and $\sQ_-$, and we use juxaposition to denote tensor product. The morphisms of $\Heis_{F,k}$ are generated by
  \begin{gather*}
    x \colon \sQ_+ \to \sQ_+,\
    s \colon \sQ_+ \sQ_+ \to \sQ_+ \sQ_+,\
    c \colon \one \to \sQ_- \sQ_+,\
    d \colon \sQ_+ \sQ_- \to \one,\
    \beta_f \colon \sQ_+ \to \sQ_+,\ f \in F,
    \\
    |x| = \Delta,\ \bar x = 0,\
    |s| = 0,\ \bar s = 0,\
    |c| = 0,\ \bar c = 0,\
    |d| = 0,\ \bar d = 0,\
    |\beta_f| = |f|,\
    \bar{\beta}_f = \bar f,\ f \in F,
  \end{gather*}
  subject to certain relations.  Using the usual string calculus for strict monoidal (super)categories, we depict the generating morphisms by the diagrams
  \[
    x =
    \begin{tikzpicture}[anchorbase]
      \draw[->] (0,0) -- (0,0.6);
      \redcircle{(0,0.3)};
    \end{tikzpicture}
    \ ,\quad
    s =
    \begin{tikzpicture}[anchorbase]
      \draw [->](0,0) -- (0.6,0.6);
      \draw [->](0.6,0) -- (0,0.6);
    \end{tikzpicture}
    \ ,\quad
    c =
    \begin{tikzpicture}[anchorbase]
      \draw[->] (0,.2) -- (0,0) arc (180:360:.3) -- (.6,.2);
    \end{tikzpicture}
    \ ,\quad
    d =
    \begin{tikzpicture}[anchorbase]
      \draw[->] (0,-.2) -- (0,0) arc (180:0:.3) -- (.6,-.2);
    \end{tikzpicture}
    \ ,\quad
    \beta_f =
    \begin{tikzpicture}[anchorbase]
      \draw[->] (0,0) -- (0,0.6);
      \bluedot{(0,0.3)} node[anchor=west, color=black] {$f$};
    \end{tikzpicture}
    \ ,\ f \in F.
  \]
  We refer to the decoration representing $x$ as a \emph{dot} and the decorations representing $\beta_f$, $f \in F$, as \emph{tokens}.  The identity morphisms of $\sQ_+$ and $\sQ_-$ are denoted by $\uparrow$ and $\downarrow$, respectively.  For $n \ge 1$, we denote the $n$-th power $x^n$ of $x$ by labelling the dot with the exponent $n$:
  \[
    x^n =
    \begin{tikzpicture}[anchorbase]
      \draw[->] (0,0) -- (0,1);
      \redcircle{(0,0.5)} node[anchor=west, color=black] {$n$};
    \end{tikzpicture}
  \]
  We also define
  \begin{equation} \label{eq:t-def}
    t \colon \sQ_+ \sQ_- \to \sQ_- \sQ_+,\quad
    t =
    \begin{tikzpicture}[anchorbase]
      \draw [->](0,0) -- (0.6,0.6);
      \draw [<-](0.6,0) -- (0,0.6);
    \end{tikzpicture}
    \ :=\
    \begin{tikzpicture}[anchorbase,scale=0.6]
      \draw[->] (0.3,0) -- (-0.3,1);
      \draw[->] (-0.75,1) -- (-0.75,0.5) .. controls (-0.75,0.2) and (-0.5,0) .. (0,0.5) .. controls (0.5,1) and (0.75,0.8) .. (0.75,0.5) -- (0.75,0);
    \end{tikzpicture}
    \ .
  \end{equation}
  We impose three sets of relations:
  \begin{enumerate}[wide]
    \item \emph{Affine wreath product algebra relations}:  We have a homomorphism of graded superalgebras
      \begin{equation} \label{rel:token-homom}
        F \to \End \sQ_+,\quad f \mapsto \beta_f,
      \end{equation}
      so that, in particular,
      \begin{equation} \label{rel:token-colide-up}
        \begin{tikzpicture}[anchorbase]
          \draw[->] (0,0) -- (0,1);
          \bluedot{(0,0.35)} node[anchor=east,color=black] {$g$};
          \bluedot{(0,0.7)} node[anchor=east,color=black] {$f$};
        \end{tikzpicture}
        \ = \
        \begin{tikzpicture}[anchorbase]
          \draw[->] (0,0) -- (0,1);
          \bluedot{(0,0.5)} node[anchor=west,color=black] {$fg$};
        \end{tikzpicture}
        ,\quad f,g \in F.
      \end{equation}

      Furthermore, the following relations are satisfied for all $f \in F$:

      \noindent\begin{minipage}{0.33\linewidth}
        \begin{equation} \label{rel:braid-up}
          \begin{tikzpicture}[anchorbase]
            \draw[->] (0,0) -- (1,1);
            \draw[->] (1,0) -- (0,1);
            \draw[->] (0.5,0) .. controls (0,0.5) .. (0.5,1);
          \end{tikzpicture}
          \ =\
          \begin{tikzpicture}[anchorbase]
            \draw[->] (0,0) -- (1,1);
            \draw[->] (1,0) -- (0,1);
            \draw[->] (0.5,0) .. controls (1,0.5) .. (0.5,1);
          \end{tikzpicture}\ ,
        \end{equation}
      \end{minipage}%
      \begin{minipage}{0.33\linewidth}
        \begin{equation} \label{rel:doublecross-up}
          \begin{tikzpicture}[anchorbase]
            \draw[->] (0,0) .. controls (0.5,0.5) .. (0,1);
            \draw[->] (0.5,0) .. controls (0,0.5) .. (0.5,1);
          \end{tikzpicture}
          \ =\
          \begin{tikzpicture}[anchorbase]
            \draw[->] (0,0) --(0,1);
            \draw[->] (0.5,0) -- (0.5,1);
          \end{tikzpicture}\ ,
        \end{equation}
      \end{minipage}
      \begin{minipage}{0.33\linewidth}
        \begin{equation} \label{rel:dot-token-up-slide}
          \begin{tikzpicture}[anchorbase]
            \draw[->] (0,0) -- (0,1);
            \redcircle{(0,0.3)};
            \bluedot{(0,0.6)} node[anchor=east, color=black] {$f$};
          \end{tikzpicture}
          \ =\
          \begin{tikzpicture}[anchorbase]
            \draw[->] (0,0) -- (0,1);
            \redcircle{(0,0.6)};
            \bluedot{(0,0.3)} node[anchor=west, color=black] {$\psi(f)$};
          \end{tikzpicture}\ ,
        \end{equation}
      \end{minipage}\par\vspace{\belowdisplayskip}

      \noindent\begin{minipage}{0.5\linewidth}
        \begin{equation} \label{rel:tokenslide-up-right}
          \begin{tikzpicture}[anchorbase]
            \draw[->] (0,0) -- (1,1);
            \draw[->] (1,0) -- (0,1);
            \bluedot{(.25,.25)} node [anchor=south east, color=black] {$f$};
          \end{tikzpicture}
          \ =\
          \begin{tikzpicture}[anchorbase]
            \draw[->](0,0) -- (1,1);
            \draw[->](1,0) -- (0,1);
            \bluedot{(0.75,.75)} node [anchor=north west, color=black] {$f$};
          \end{tikzpicture}\ ,
        \end{equation}
      \end{minipage}%
      \begin{minipage}{0.5\linewidth}
        \begin{equation} \label{rel:dotslide1}
          \begin{tikzpicture}[anchorbase]
            \draw[->] (0,0) -- (1,1);
            \draw[->] (1,0) -- (0,1);
            \redcircle{(0.25,.75)};
          \end{tikzpicture}
          \ -\
          \begin{tikzpicture}[anchorbase]
            \draw[->] (0,0) -- (1,1);
            \draw[->] (1,0) -- (0,1);
            \redcircle{(.75,.25)};
          \end{tikzpicture}
          \ =\
          \begin{tikzpicture}[anchorbase]
            \draw[->] (0,0) -- (0,1);
            \draw[->] (0.5,0) -- (0.5,1);
            \bluedot{(0,0.3)} node[anchor=east, color=black] {$\chk{b}$};
            \bluedot{(0.5,0.6)} node[anchor=west, color=black] {$b$};
          \end{tikzpicture}\ .
        \end{equation}
      \end{minipage}\par\vspace{\belowdisplayskip}

      \noindent In \cref{rel:dotslide1} and throughout the paper we adopt the convention that, \emph{whenever an expression contains the symbols $b$ and $\chk{b}$ (or $a$ and $\chk{a}$, etc.), there is an implicit sum over $b \in B$ (or $a \in B$, etc.)}.  For example,
      \[
        \begin{tikzpicture}[anchorbase]
          \draw[->] (0,0) -- (0,1);
          \draw[->] (0.5,0) -- (0.5,1);
          \bluedot{(0,0.3)} node[anchor=east, color=black] {$\chk{b}$};
          \bluedot{(0.5,0.6)} node[anchor=west, color=black] {$b$};
        \end{tikzpicture}
        =
        \sum_{b \in B}
        \begin{tikzpicture}[anchorbase]
          \draw[->] (0,0) -- (0,1);
          \draw[->] (0.5,0) -- (0.5,1);
          \bluedot{(0,0.3)} node[anchor=east, color=black] {$\chk{b}$};
          \bluedot{(0.5,0.6)} node[anchor=west, color=black] {$b$};
        \end{tikzpicture}
        \qquad \text{and} \qquad
        b a \otimes \chk{a} = \sum_{a \in B} b a \otimes \chk{a}
        \qquad \text{by convention}.
      \]
      (In a few instances we will include the explicit sum where there is some possibility for confusion.)  It follows from the above relations that we also have the relations:

      \noindent\begin{minipage}{0.5\linewidth}
        \begin{equation} \label{rel:tokenslide-up-left}
          \begin{tikzpicture}[anchorbase]
            \draw[->] (0,0) -- (1,1);
            \draw[->] (1,0) -- (0,1);
            \bluedot{(.75,.25)} node [anchor=south west, color=black] {$f$};
          \end{tikzpicture}
          \ =\
          \begin{tikzpicture}[anchorbase]
            \draw[->] (0,0) -- (1,1);
            \draw[->] (1,0) -- (0,1);
            \bluedot{(0.25,.75)} node [anchor=north east, color=black] {$f$};
          \end{tikzpicture}\ ,
        \end{equation}
      \end{minipage}%
      \begin{minipage}{0.5\linewidth}
        \begin{equation} \label{rel:dotslide2}
          \begin{tikzpicture}[anchorbase]
            \draw[->] (0,0) -- (1,1);
            \draw[->] (1,0) -- (0,1);
            \redcircle{(0.25,.25)};
          \end{tikzpicture}
          \ -\
          \begin{tikzpicture}[anchorbase]
            \draw[->] (0,0) -- (1,1);
            \draw[->] (1,0) -- (0,1);
            \redcircle{(.75,.75)};
          \end{tikzpicture}
          \ =\
          \begin{tikzpicture}[anchorbase]
            \draw[->] (0,0) -- (0,1);
            \draw[->] (0.5,0) -- (0.5,1);
            \bluedot{(0,0.5)} node[anchor=east, color=black] {$b$};
            \bluedot{(0.5,0.5)} node[anchor=west, color=black] {$\chk{b}$};
          \end{tikzpicture}\ .
        \end{equation}
      \end{minipage}\par\vspace{\belowdisplayskip}

      \noindent Recall that when morphisms appear at the same height, as is the case for the two tokens on the right side of \cref{rel:dotslide2}, one obtains the same morphism by slightly increasing the height of the leftmost one.  See, for example, \cite[(1.3)]{BE17}.

    \item \emph{Right adjunction relations}:  We impose the following relations:

      \noindent\begin{minipage}{0.5\linewidth}
        \begin{equation} \label{rel:right-adjunction-up}
          \begin{tikzpicture}[anchorbase]
            \draw[->] (0,0) -- (0,0.6) arc(180:0:0.2) -- (0.4,0.4) arc(180:360:0.2) -- (0.8,1);
          \end{tikzpicture}
          \ =\
          \begin{tikzpicture}[anchorbase]
            \draw[->] (0,0) -- (0,1);
          \end{tikzpicture}\ ,
        \end{equation}
      \end{minipage}%
      \begin{minipage}{0.5\linewidth}
        \begin{equation} \label{rel:right-adjunction-down}
          \begin{tikzpicture}[anchorbase]
            \draw[->] (0,1) -- (0,0.4) arc(180:360:0.2) -- (0.4,0.6) arc(180:0:0.2) -- (0.8,0);
          \end{tikzpicture}
          \ =\
          \begin{tikzpicture}[anchorbase]
            \draw[<-] (0,0) -- (0,1);
          \end{tikzpicture}\ .
        \end{equation}
      \end{minipage}\par\vspace{\belowdisplayskip}

    \item \emph{Inversion relation}: The following matrix of morphisms is an isomorphism in the additive envelope of $\Heis_{F,k}$:
      \begin{gather} \label{eq:inversion-relation-pos-l}
        \left[
          \begin{tikzpicture}[anchorbase]
            \draw [->](0,0) -- (0.6,0.6);
            \draw [<-](0.6,0) -- (0,0.6);
          \end{tikzpicture}
          \quad
          \begin{tikzpicture}[anchorbase]
            \draw[->] (0,0) -- (0,0.7) arc (180:0:.3) -- (0.6,0);
            \redcircle{(0,0.6)} node[anchor=west,color=black] {$r$};
            \bluedot{(0,0.2)} node[anchor=west,color=black] {$\chk{b}$};
          \end{tikzpicture}\ ,\
          0 \le r \le k-1,\ b \in B
        \right]^T
        \ \colon \sQ_+ \sQ_- \to \sQ_- \sQ_+ \oplus \one ^{\oplus k \dim F} \quad \text{if } k \ge 0,
        \\ \label{eq:inversion-relation-neg-l}
        \left[
          \begin{tikzpicture}[anchorbase]
            \draw [->](0,0) -- (0.6,0.6);
            \draw [<-](0.6,0) -- (0,0.6);
          \end{tikzpicture}
          \quad
          \begin{tikzpicture}[anchorbase]
            \draw[->] (0,1) -- (0,0.3) arc (180:360:.3) -- (0.6,1);
            \redcircle{(0.6,0.7)} node[anchor=east,color=black] {$r$};
            \bluedot{(0.6,0.3)} node[anchor=east,color=black] {$\chk{b}$};
          \end{tikzpicture}\ ,\
          0 \le r \le -k-1,\ b \in B
        \right]
        \ \colon \sQ_+ \sQ_- \oplus \one^{\oplus (-k \dim F)} \to \sQ_- \sQ_+ \quad \text{if } k < 0.
      \end{gather}
      (The matrix \cref{eq:inversion-relation-pos-l} is of size $(1 + k \dim F) \times 1$, while the matrix \cref{eq:inversion-relation-neg-l} is of size $1 \times (1 + k \dim F)$.)  Note that these conditions are independent of the choice of basis $B$ of $F$.
  \end{enumerate}
\end{defin}

In the special case $k=0$, the inversion relation means that there is another generating morphism
\[
  t' =
  \begin{tikzpicture}[anchorbase]
    \draw [<-](0,0) -- (0.6,0.6);
    \draw [->](0.6,0) -- (0,0.6);
  \end{tikzpicture}
  \colon \sQ_- \sQ_+ \to \sQ_+ \sQ_-,
\]
that is inverse to $t$.  Thus we have
\[
  \begin{tikzpicture}[anchorbase]
    \draw[->] (0,0) .. controls (0.5,0.5) .. (0,1);
    \draw[<-] (0.5,0) .. controls (0,0.5) .. (0.5,1);
  \end{tikzpicture}
  \ =\
  \begin{tikzpicture}[anchorbase]
    \draw[->] (0,0) -- (0,1);
    \draw[<-] (0.5,0) -- (0.5,1);
  \end{tikzpicture}
  ,\qquad
  \begin{tikzpicture}[anchorbase]
    \draw[<-] (0,0) .. controls (0.5,0.5) .. (0,1);
    \draw[->] (0.5,0) .. controls (0,0.5) .. (0.5,1);
  \end{tikzpicture}
  \ =\
  \begin{tikzpicture}[anchorbase]
    \draw[<-] (0,0) -- (0,1);
    \draw[->] (0.5,0) -- (0.5,1);
  \end{tikzpicture}
  \ .
\]
If $F = \kk$ and we reflect diagrams in a vertical axis, we obtain precisely the affine oriented Brauer category of \cite{BCNR17}.  Thus, we can view $\Heis_{F,0}$ as a graded Frobenius superalgebra deformation of the (reversed) affine oriented Brauer category.

In the case $k \ne 0$, the inversion relation is substantially more intricate.  We analyze its consequences in \cref{sec:inversion}.  The main result of that analysis is contained in the next two theorems.  The proofs of these and other theorems stated in this introduction will be given in \cref{sec:proofs}.  In the special case that $F=\kk$, \cref{theo:alternate-presentation,theo:relations} are due to Brundan \cite[Th.~1.2, Th.~1.3]{Bru17}.

We adopt the following convention for computing determinants of matrices whose entries lie in a superalgebra.  For $1 \times 1$ matrices, we define $\det(a) = a$.  Then, for $n > 1$, we recursively define
\begin{equation} \label{eq:determinant-convention}
  \det (a_{i,j})_{i,j=1}^n
  = \sum_{s=1}^n (-1)^{s+1} a_{s,1} \det A_{s,1},
\end{equation}
where $A_{s,1}$ is the $(n-1) \times (n-1)$ matrix obtained from the matrix $(a_{i,j})_{i,j=1}^n$ by deleting the $s$-th row and first column.  In other words, we compute determinants by recursively expanding along the first column.

\begin{theo} \label{theo:alternate-presentation}
  There are unique even morphisms $c' \colon \one \to \sQ_+ \sQ_-$ and $d' \colon \sQ_- \sQ_+ \to \one$ in $\Heis_{F,k}$, drawn as
  \[
    c' =
    \begin{tikzpicture}[anchorbase]
      \draw[<-] (0,.2) -- (0,0) arc (180:360:.3) -- (.6,.2);
    \end{tikzpicture}
    \ ,\qquad
    d' =
    \begin{tikzpicture}[anchorbase]
      \draw[<-] (0,-.2) -- (0,0) arc (180:0:.3) -- (.6,-.2);
    \end{tikzpicture}
    \ ,
  \]
  with $|c'| = - k \Delta$ and $|d'| = k \Delta$, such that the following relations hold:
  \begin{equation} \label{theo-eq:doublecross-up-down}
    \begin{tikzpicture}[anchorbase]
      \draw[->] (0,0) .. controls (0.5,0.5) .. (0,1);
      \draw[<-] (0.5,0) .. controls (0,0.5) .. (0.5,1);
    \end{tikzpicture}
    \ =\
    \begin{tikzpicture}[anchorbase]
      \draw[->] (0,0) -- (0,1);
      \draw[<-] (0.5,0) -- (0.5,1);
    \end{tikzpicture}
    \ + \sum_{r,s \ge 0}
    \begin{tikzpicture}[anchorbase]
      \draw[->] (0,0) -- (0,0.7) arc (180:0:0.3) -- (0.6,0);
      \draw[<-] (0,2.1) -- (0,1.8) arc (180:360:0.3) -- (0.6,2.1);
      \redcircle{(0,0.6)} node[anchor=west,color=black] {$r$};
      \bluedot{(0,0.2)} node[anchor=west,color=black] {$\chk{b}$};
      \redcircle{(0.6,1.8)} node[anchor=west,color=black] {$s$};
      \bluedot{(0,1.8)} node[anchor=east,color=black] {$a$};
      \draw[->] (1,1.55) arc(90:450:0.3);
      \bluedot{(1.3,1.25)} node[anchor=west,color=black] {$\chk{a} b$};
      \redcircle{(0.7,1.25)} node[anchor=east,color=black] {\dotlabel{-r-s-2}};
    \end{tikzpicture}
    \left( =
    \begin{tikzpicture}[anchorbase]
      \draw[->] (0,0) -- (0,1);
      \draw[<-] (0.5,0) -- (0.5,1);
    \end{tikzpicture}
    \ + \delta_{k,1}
    \begin{tikzpicture}[anchorbase]
      \draw[<-] (0,1.1) -- (0,0.9) arc(180:360:0.3) -- (0.6,1.1);
      \draw[->] (0,-0.1) -- (0,0.1) arc(180:0:0.3) -- (0.6,-0.1);
      \bluedot{(0,0.1)} node[anchor=west,color=black] {$\chk{b}$};
      \bluedot{(0,0.9)} node[anchor=west,color=black] {$b$};
    \end{tikzpicture}
    \text{ if } k \le 1 \right),
  \end{equation}
  \begin{equation} \label{theo-eq:doublecross-down-up}
    \begin{tikzpicture}[anchorbase]
      \draw[<-] (0,0) .. controls (0.5,0.5) .. (0,1);
      \draw[->] (0.5,0) .. controls (0,0.5) .. (0.5,1);
    \end{tikzpicture}
    \ =\
    \begin{tikzpicture}[anchorbase]
      \draw[<-] (0,0) -- (0,1);
      \draw[->] (0.5,0) -- (0.5,1);
    \end{tikzpicture}
    \ + \sum_{r,s \ge 0} (-1)^{\bar a \bar b + \bar a + \bar b}
    \begin{tikzpicture}[anchorbase]
      \draw[->] (0,2) -- (0,1.3) arc (180:360:0.3) -- (0.6,2);
      \draw[<-] (0,-0.1) -- (0,0.2) arc (180:0:0.3) -- (0.6,-0.1);
      \redcircle{(0.6,1.7)} node[anchor=east,color=black] {$r$};
      \bluedot{(0.6,1.4)} node[anchor=west,color=black] {$\chk{b}$};
      \redcircle{(0.6,0.2)} node[anchor=west,color=black] {$s$};
      \bluedot{(0,0.2)} node[anchor=east,color=black] {$a$};
      \draw[->] (1,1.1) arc(90:-270:0.3);
      \bluedot{(1.3,0.8)} node[anchor=west,color=black] {$\chk{a}b$};
      \redcircle{(0.7,0.8)} node[anchor=east,color=black] {\dotlabel{-r-s-2}};
    \end{tikzpicture}
    \left( =
    \begin{tikzpicture}[anchorbase]
      \draw[<-] (0,0) -- (0,1);
      \draw[->] (0.5,0) -- (0.5,1);
    \end{tikzpicture}
    \ - \delta_{k,-1} (-1)^{\bar b}
    \begin{tikzpicture}[anchorbase]
      \draw[->] (0,1.1) -- (0,0.9) arc(180:360:0.3) -- (0.6,1.1);
      \draw[<-] (0,-0.1) -- (0,0.1) arc(180:0:0.3) -- (0.6,-0.1);
      \bluedot{(0,0.1)} node[anchor=east,color=black] {$b$};
      \bluedot{(0.6,0.9)} node[anchor=east,color=black] {$\chk{b}$};
    \end{tikzpicture}
    \text{ if } k \ge -1 \right),
  \end{equation}

  \noindent\begin{minipage}{0.5\linewidth}
    \begin{equation} \label{theo-eq:right-curl}
      \begin{tikzpicture}[anchorbase]
        \draw[->] (0,-0.75) .. controls (0,0.5) and (0.5,0.5) .. (0.5,0) .. controls (0.5,-0.5) and (0,-0.5) .. (0,0.75);
      \end{tikzpicture}
      = \delta_{k,0}\
      \begin{tikzpicture}[anchorbase]
        \draw[->] (0,-0.75) -- (0,0.75);
      \end{tikzpicture}
      \quad \text{if } k \ge 0,
    \end{equation}
  \end{minipage}%
  \begin{minipage}{0.5\linewidth}
    \begin{equation} \label{theo-eq:clockwise-circ}
      \begin{tikzpicture}[anchorbase]
        \draw[<-] (0,0.3) arc(90:450:0.3);
        \redcircle{(-0.3,0)} node[anchor=east,color=black] {$r$};
        \bluedot{(0.3,0)} node[anchor=west,color=black] {$f$};
      \end{tikzpicture}
      \ = -\delta_{r,k-1} \tr(f)
      \quad \text{if } 0 \le r < k,
    \end{equation}
  \end{minipage}\par\vspace{\belowdisplayskip}

  \noindent\begin{minipage}{0.5\linewidth}
    \begin{equation} \label{theo-eq:left-curl}
      \begin{tikzpicture}[anchorbase]
        \draw[->] (0,-0.75) .. controls (0,0.5) and (-0.5,0.5) .. (-0.5,0) .. controls (-0.5,-0.5) and (0,-0.5) .. (0,0.75);
      \end{tikzpicture}
      = \delta_{k,0}\
      \begin{tikzpicture}[anchorbase]
        \draw[->] (0,-0.75) -- (0,0.75);
      \end{tikzpicture}
      \quad \text{if } k \le 0,
    \end{equation}
  \end{minipage}%
  \begin{minipage}{0.5\linewidth}
    \begin{equation} \label{theo-eq:counterclockwise-circ}
      \begin{tikzpicture}[anchorbase]
        \draw[->] (0,0.3) arc(90:450:0.3);
        \redcircle{(0.3,0)} node[anchor=west,color=black] {$r$};
        \bluedot{(-0.3,0)} node[anchor=east,color=black] {$f$};
      \end{tikzpicture}
      \ = \delta_{r,-k-1} \tr(f)
      \quad \text{if } 0 \le r < -k.
    \end{equation}
  \end{minipage}\par\vspace{\belowdisplayskip}

  Moreover, $\Heis_{F,k}$ can be presented equivalently as the strict $\kk$-linear monoidal supercategory generated by the objects $\sQ_+$, $\sQ_-$, and morphisms $s,x,c,d,c',d'$, and $\beta_f$, $f \in F$, subject only to the relations \cref{rel:token-homom,rel:braid-up,rel:doublecross-up,rel:dot-token-up-slide,rel:tokenslide-up-right,rel:dotslide1,rel:right-adjunction-up,rel:right-adjunction-down,theo-eq:doublecross-up-down,theo-eq:doublecross-down-up,theo-eq:right-curl,theo-eq:clockwise-circ,theo-eq:left-curl,theo-eq:counterclockwise-circ}.
  In the above relations, in addition to the rightward crossing $t$ defined by \cref{eq:t-def}, we have used the left crossing $t' \colon \sQ_- \sQ_+ \to \sQ_+ \sQ_-$ defined by
  \begin{equation} \label{eq:t-def-alt}
    t' =
    \begin{tikzpicture}[anchorbase]
      \draw [<-](0,0) -- (0.6,0.6);
      \draw [->](0.6,0) -- (0,0.6);
    \end{tikzpicture}
    \ :=\
    \begin{tikzpicture}[anchorbase,scale=0.6]
      \draw[<-] (0.3,0) -- (-0.3,-1);
      \draw[<-] (-0.75,-1) -- (-0.75,-0.5) .. controls (-0.75,-0.2) and (-0.5,0) .. (0,-0.5) .. controls (0.5,-1) and (0.75,-0.8) .. (0.75,-0.5) -- (0.75,0);
    \end{tikzpicture}
    \ ,
  \end{equation}
  and the negatively dotted bubbles defined, for $f \in F$, by
  \begin{gather} \label{theo-eq:neg-ccbubble}
    \ccbubble{$f$}{\dotlabel{r-k-1}}
    = \sum_{b_1,\dotsc,b_{r-1} \in B} \det
    \left( \cbubble{$\chk{b}_{j-1}b_j$}{\dotlabel{i-j+k}} \right)_{i,j=1}^r,
    \quad \text{if } r \le k,
    \\ \label{theo-eq:neg-cbubble}
    \cbubble{$f$}{\dotlabel{r+k-1}}
    = (-1)^{r+1} \sum_{b_1,\dotsc,b_{r-1} \in B} \det
    \left( \ccbubble{$\chk{b}_{j-1}b_j$}{\dotlabel{i-j-k}} \right)_{i,j=1}^r,
    \quad \text{if } r \le -k,
  \end{gather}
  where we adopt the convention that $\chk{b}_0 = f$ and $b_r = 1$, and we interpret the determinants as $\tr(f)$ if $r=0$ and as $0$ if $r < 0$.  We have also used dots and tokens on downward strands, as defined by the first equalities in \cref{rel:token-rotation,rel:dot-rotation} below.
\end{theo}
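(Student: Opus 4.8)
The plan is to follow Brundan's argument for the case $F=\kk$, adapted to the graded Frobenius setting, splitting the work into an analysis of the inversion relation (the substance of \cref{sec:inversion}) followed by a formal comparison of presentations. First I would unpack the inversion relation inside the additive envelope of $\Heis_{F,k}$, say for $k\ge 0$ (the case $k<0$ being symmetric under the duality reversing strand orientation, which swaps the roles of $c'$ and $d'$, while $k=0$ gives $t'=t^{-1}$ at once). Let $M$ denote the column matrix \cref{eq:inversion-relation-pos-l} and let $N=\bigl[\,t'\ \big|\ u_{r,b}\,\bigr]$ be its unique two-sided inverse, a row matrix with entries $t'\colon\sQ_-\sQ_+\to\sQ_+\sQ_-$ and $u_{r,b}\colon\one\to\sQ_+\sQ_-$ for $0\le r\le k-1$ and $b\in B$. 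Expanding $NM=\id$ and $MN=\id$ entrywise and rewriting each entry diagrammatically using only \cref{rel:token-homom,rel:braid-up,rel:doublecross-up,rel:dot-token-up-slide,rel:tokenslide-up-right,rel:dotslide1,rel:right-adjunction-up,rel:right-adjunction-down} yields a ``partial inverse'' relation between $t'$ and $t$, the vanishing of the various dotted curls formed from $t'$, and the values of the dotted clockwise circles. Bootstrapping from these --- straightening curls using the dotted-curl relations and evaluating leftover bubbles using the circle values --- I would obtain \cref{theo-eq:right-curl,theo-eq:left-curl,theo-eq:clockwise-circ,theo-eq:counterclockwise-circ} and then the double-crossing relations \cref{theo-eq:doublecross-up-down,theo-eq:doublecross-down-up}. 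The morphisms $c'$ and $d'$ are extracted as the top-degree pieces of the $u_{r,b}$ and their adjoint mates; a degree count (using \cref{eq:Nakayama-def,eq:dual-basis-def}) forces these into internal degrees $-k\Delta$ and $k\Delta$ and shows independence of the basis $B$. The negatively-dotted-bubble formulas \cref{theo-eq:neg-ccbubble,theo-eq:neg-cbubble} follow by induction on $r$, the circle relations producing a linear recursion for the bubbles whose solution is precisely the first-column expansion \cref{eq:determinant-convention} of the stated determinant. Uniqueness of $c'$ and $d'$ is then automatic: capping a putative such morphism against $c$ or $d$ and applying \cref{theo-eq:doublecross-up-down,theo-eq:doublecross-down-up} and the adjunctions rewrites it in terms of $t$, $c$, $d$, dots, tokens, and bubbles.

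For the alternative presentation, let $\cH'$ be the strict $\kk$-linear monoidal supercategory on the listed generators and relations. Sending each generator to the morphism of the same name --- with $c'$ and $d'$ the morphisms just constructed --- defines a functor $\Phi\colon\cH'\to\Heis_{F,k}$, because every defining relation of $\cH'$ is either one of \cref{rel:token-homom,rel:braid-up,rel:doublecross-up,rel:dot-token-up-slide,rel:tokenslide-up-right,rel:dotslide1,rel:right-adjunction-up,rel:right-adjunction-down} (which hold in $\Heis_{F,k}$ by fiat) or one of \cref{theo-eq:doublecross-up-down,theo-eq:doublecross-down-up,theo-eq:right-curl,theo-eq:clockwise-circ,theo-eq:left-curl,theo-eq:counterclockwise-circ} (established above). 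Conversely I would define $\Psi\colon\Heis_{F,k}\to\cH'$ to be the identity on generators; this is legitimate once the inversion relation is known to hold in $\cH'$. For that I would exhibit an explicit candidate inverse $\widetilde N$ of $M$ in the additive envelope of $\cH'$ --- first entry $t'$, remaining entries the dotted cup-caps built from $c'$ together with the negatively dotted bubbles prescribed by \cref{theo-eq:doublecross-up-down} --- and verify $\widetilde N M=\id$ and $M\widetilde N=\id$ directly; the former is \cref{theo-eq:doublecross-up-down} together with the definition of the entries, while the latter reduces, after straightening curls by \cref{theo-eq:right-curl,theo-eq:left-curl,theo-eq:doublecross-down-up} and collapsing nested bubbles by \cref{theo-eq:clockwise-circ,theo-eq:counterclockwise-circ,theo-eq:neg-ccbubble,theo-eq:neg-cbubble}, to the ``infinite Grassmannian''-type identities among the bubbles. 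The cases $k<0$ and $k=0$ are handled symmetrically and directly, respectively. Since $\Phi$ and $\Psi$ are the identity on generators they are mutually inverse, proving $\Heis_{F,k}\cong\cH'$.

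The main obstacle, and the most error-prone step, is this last direct verification inside $\cH'$ that $\widetilde N$ is a two-sided inverse of \cref{eq:inversion-relation-pos-l,eq:inversion-relation-neg-l}: it requires repeatedly sliding dots and tokens through crossings via \cref{rel:dot-token-up-slide,rel:dotslide1,rel:dotslide2}, straightening many curls, and collapsing long alternating sums of nested dotted bubbles using the determinant identities, all the while tracking the Koszul signs of the super structure (such as the factor $(-1)^{\bar a\bar b+\bar a+\bar b}$ in \cref{theo-eq:doublecross-down-up}) and handling the degeneracies that arise when the degree-zero part $F_0$ is strictly larger than $\kk$. The other delicate point is pinning down the correct normalization of $c'$ and $d'$ in the first stage, so that their degrees and the constants appearing in \cref{theo-eq:right-curl,theo-eq:clockwise-circ,theo-eq:left-curl,theo-eq:counterclockwise-circ} come out exactly as stated.
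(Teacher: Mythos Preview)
Your proposal is correct and follows essentially the same route as the paper: extract $t'$ and the decorated left cups/caps from the two-sided inverse of the matrix \cref{eq:inversion-relation-pos-l} (or \cref{eq:inversion-relation-neg-l}), define $c'$ and $d'$ from these, derive \cref{theo-eq:doublecross-up-down,theo-eq:doublecross-down-up,theo-eq:right-curl,theo-eq:clockwise-circ,theo-eq:left-curl,theo-eq:counterclockwise-circ}, and then construct mutually inverse functors between the two presentations by exhibiting an explicit inverse to the matrix inside $\cH'$ and checking the entries via the bubble identities. One small point to tighten: your uniqueness argument (``capping a putative such morphism against $c$ or $d$'') is vaguer than what is actually needed; the paper first observes that $t'$ is already determined as the first entry of the inverse matrix, hence independent of $c'$ and $d'$, and then expresses $c'$ and $d'$ explicitly in terms of $t'$, dots, and the other generators (e.g.\ $d'$ as the dotted twist-cap when $k\ge 0$), which is what makes the composite $\Psi\circ\Phi$ fix $c'$ and $d'$.
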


\begin{theo} \label{theo:relations}
  Using the notation from \cref{theo:alternate-presentation}, the following relations are consequences of the defining relations.
  \begin{enumerate}[wide]
    \item \emph{Infinite grassmannian relations}: For $f,g \in F$, we have

      \noindent\begin{minipage}{0.5\linewidth}
        \begin{equation} \label{eq:inf-grass1}
          \begin{tikzpicture}[anchorbase]
            \draw[<-] (0,0.3) arc(90:450:0.3);
            \redcircle{(-0.3,0)} node[anchor=east,color=black] {$r$};
            \bluedot{(0.3,0)} node[anchor=west,color=black] {$f$};
          \end{tikzpicture}
          \ = -\delta_{r,k-1} \tr(f)
          \quad \text{if } r \le k-1,
       \end{equation}
      \end{minipage}%
      \begin{minipage}{0.5\linewidth}
        \begin{equation} \label{eq:inf-grass2}
          \begin{tikzpicture}[anchorbase]
            \draw[->] (0,0.3) arc(90:450:0.3);
            \redcircle{(0.3,0)} node[anchor=west,color=black] {$r$};
            \bluedot{(-0.3,0)} node[anchor=east,color=black] {$f$};
          \end{tikzpicture}
          \ = \delta_{r,-k-1} \tr(f)
          \quad \text{if } r \le -k-1,
        \end{equation}
      \end{minipage}\par\vspace{\belowdisplayskip}

      \begin{equation} \label{eq:inf-grass3}
        \sum_{\substack{r,s \in \Z \\ r + s = t-2}}\
        \begin{tikzpicture}[anchorbase]
          \draw[<-] (0,0.3) arc(90:450:0.3);
          \draw[->] (0,-0.5) arc(90:450:0.3);
          \redcircle{(-0.3,0)} node[anchor=east,color=black] {$r$};
          \bluedot{(0.3,0)} node[anchor=west,color=black] {$fb$};
          \redcircle{(0.3,-0.8)} node[anchor=west,color=black] {$s$};
          \bluedot{(-0.3,-0.8)} node[anchor=east,color=black] {$\chk{b}g$};
        \end{tikzpicture}
        \ = \sum_{\substack{r,s \ge 0 \\ r + s = t}}\
        \begin{tikzpicture}[anchorbase]
          \draw[<-] (0,0.3) arc(90:450:0.3);
          \draw[->] (0,-0.5) arc(90:450:0.3);
          \redcircle{(-0.3,0)} node[anchor=east,color=black] {\dotlabel{r+k-1}};
          \bluedot{(0.3,0)} node[anchor=west,color=black] {$fb$};
          \redcircle{(0.3,-0.8)} node[anchor=west,color=black] {\dotlabel{s-k-1}};
          \bluedot{(-0.3,-0.8)} node[anchor=east,color=black] {$\chk{b}g$};
        \end{tikzpicture}
        \ = - \delta_{t,0} \tr(fg).
      \end{equation}

    \item \emph{Left adjunction}:

      \noindent\begin{minipage}{0.5\linewidth}
        \begin{equation} \label{rel:zigzag-leftdown}
          \begin{tikzpicture}[anchorbase]
            \draw[<-] (0,0) -- (0,0.6) arc(180:0:0.2) -- (0.4,0.4) arc(180:360:0.2) -- (0.8,1);
          \end{tikzpicture}
          \ =\
          \begin{tikzpicture}[anchorbase]
            \draw[<-] (0,0) -- (0,1);
          \end{tikzpicture}\ ,
        \end{equation}
      \end{minipage}%
      \begin{minipage}{0.5\linewidth}
        \begin{equation} \label{rel:zigzag-leftup}
          \begin{tikzpicture}[anchorbase]
            \draw[<-] (0,1) -- (0,0.4) arc(180:360:0.2) -- (0.4,0.6) arc(180:0:0.2) -- (0.8,0);
          \end{tikzpicture}
          \ =\
          \begin{tikzpicture}[anchorbase]
            \draw[->] (0,0) -- (0,1);
          \end{tikzpicture}\ .
        \end{equation}
      \end{minipage}\par\vspace{\belowdisplayskip}

    \item \emph{Rotation relations}: For all $f \in F$,

      \noindent\begin{minipage}{0.4\linewidth}
        \begin{equation} \label{rel:token-rotation}
          \begin{tikzpicture}[anchorbase]
            \draw[<-] (0,0) -- (0,1);
            \bluedot{(0,0.5)} node[anchor=west,color=black] {$f$};
          \end{tikzpicture}
          \ :=\
          \begin{tikzpicture}[anchorbase]
            \draw[->] (0,1) -- (0,0.4) arc(180:360:0.2) -- (0.4,0.6) arc(180:0:0.2) -- (0.8,0);
            \bluedot{(0.4,0.5)} node[anchor=west,color=black] {$f$};
          \end{tikzpicture}
          \ =\
          \begin{tikzpicture}[anchorbase]
            \draw[<-] (0,0) -- (0,0.6) arc(180:0:0.2) -- (0.4,0.4) arc(180:360:0.2) -- (0.8,1);
            \bluedot{(0.4,0.5)};
            \draw (0.3,0.7) node[anchor=south,color=black] {\dotlabel{\psi^k(f)}};
          \end{tikzpicture}
          \ ,
        \end{equation}
      \end{minipage}%
      \begin{minipage}{0.6\linewidth}
        \begin{equation} \label{rel:dot-rotation}
          \begin{tikzpicture}[anchorbase]
            \draw[<-] (0,0) -- (0,1.4);
            \redcircle{(0,0.7)};
          \end{tikzpicture}
          \ :=\
          \begin{tikzpicture}[anchorbase]
            \draw[->] (0,1.2) -- (0,0.4) arc(180:360:0.2) -- (0.4,0.6) arc(180:0:0.2) -- (0.8,-0.2);
            \redcircle{(0.4,0.5)};
          \end{tikzpicture}
          \ =\
          \begin{tikzpicture}[anchorbase]
            \draw[<-] (0,-0.2) -- (0,0.6) arc(180:0:0.2) -- (0.4,0.4) arc(180:360:0.2) -- (0.8,1.2);
            \redcircle{(0.4,0.5)};
          \end{tikzpicture}
          \ - \
          \begin{tikzpicture}[anchorbase]
            \draw[<-] (0,-0.2) -- (0,1.2);
            \draw[<-] (0.7,1) arc(90:450:0.3);
            \bluedot{(0,0.1)} node[anchor=east,color=black] {$\chk{b}$};
            \redcircle{(0.4,0.7)} node[anchor=east,color=black] {$k$};
            \bluedot{(1,0.7)} node[anchor=west,color=black] {\dotlabel{\psi^{-1}(b)-b}};
          \end{tikzpicture}
          \ ,
        \end{equation}
      \end{minipage}\par\vspace{\belowdisplayskip}

      \begin{equation} \label{rel:crossing-rotation}
        \begin{tikzpicture}[anchorbase]
          \draw[<-] (0,0) -- (1,1);
          \draw[<-] (1,0) -- (0,1);
        \end{tikzpicture}
        \ :=\
        \begin{tikzpicture}[anchorbase,scale=0.5]
          \draw[->] (-1.5,1.5) .. controls (-1.5,0.5) and (-1,-1) .. (0,0) .. controls (1,1) and (1.5,-0.5) .. (1.5,-1.5);
          \draw[->] (-2,1.5) .. controls (-2,-2) and (1.5,-1.5) .. (0,0) .. controls (-1.5,1.5) and (2,2) .. (2,-1.5);
        \end{tikzpicture}
        \ =\
        \begin{tikzpicture}[anchorbase,scale=0.5]
          \draw[->] (1.5,1.5) .. controls (1.5,0.5) and (1,-1) .. (0,0) .. controls (-1,1) and (-1.5,-0.5) .. (-1.5,-1.5);
          \draw[->] (2,1.5) .. controls (2,-2) and (-1.5,-1.5) .. (0,0) .. controls (1.5,1.5) and (-2,2) .. (-2,-1.5);
        \end{tikzpicture}
        \ .
      \end{equation}

    \item \emph{Curl relations}: For all $r \ge 0$,

      \noindent\begin{minipage}{0.5\linewidth}
        \begin{equation} \label{rel:left-dotted-curl}
          \begin{tikzpicture}[anchorbase]
            \draw[->] (0,-0.75) .. controls (0,0.5) and (-0.5,0.5) .. (-0.5,0) .. controls (-0.5,-0.5) and (0,-0.5) .. (0,0.75);
            \redcircle{(-0.5,0)} node[anchor=east,color=black] {$r$};
          \end{tikzpicture}
          = \sum_{s \ge 0}
          \begin{tikzpicture}[anchorbase]
            \draw[->] (0,-0.75) -- (0,0.75);
            \draw[->] (-0.75,0.65) arc(90:450:0.3);
            \bluedot{(-0.45,0.35)} node[anchor=west,color=black] {$b$};
            \redcircle{(-1.05,0.35)} node[anchor=east,color=black] {\dotlabel{r-s-1}};
            \redcircle{(0,-0.05)} node[anchor=west,color=black] {$s$};
            \bluedot{(0,-0.4)} node[anchor=west,color=black] {$\chk{b}$};
          \end{tikzpicture}\ ,
        \end{equation}
      \end{minipage}%
      \begin{minipage}{0.5\linewidth}
        \begin{equation} \label{rel:right-dotted-curl}
          \begin{tikzpicture}[anchorbase]
            \draw[->] (0,-0.75) .. controls (0,0.5) and (0.5,0.5) .. (0.5,0) .. controls (0.5,-0.5) and (0,-0.5) .. (0,0.75);
            \redcircle{(0.5,0)} node[anchor=west,color=black] {$r$};
          \end{tikzpicture}
          = - \sum_{s \ge 0}\
          \begin{tikzpicture}[anchorbase]
            \draw[->] (0,-0.75) -- (0,0.75);
            \draw[<-] (0.95,-0.05) arc(90:450:0.3);
            \bluedot{(0.65,-0.35)} node[anchor=east,color=black] {$\chk{b}$};
            \redcircle{(1.25,-0.35)} node[anchor=west,color=black] {\dotlabel{r-s-1}};
            \redcircle{(0,0.05)} node[anchor=west,color=black] {$s$};
            \bluedot{(0,0.4)} node[anchor=west,color=black] {$b$};
          \end{tikzpicture}\ .
        \end{equation}
      \end{minipage}\par\vspace{\belowdisplayskip}

    \item \emph{Bubble slides}:  For all $f \in F$ and $r \ge 0$,
      \begin{equation} \label{rel:clockwise-bubble-slide}
        \begin{tikzpicture}[anchorbase]
          \draw[<-] (0,0.3) arc(90:450:0.3);
          \draw[->] (0.8,-1) -- (0.8,1);
          \redcircle{(0.3,0)} node[anchor=west,color=black] {$r$};
          \bluedot{(-0.3,0)} node[anchor=east,color=black] {$f$};
        \end{tikzpicture}
        \ =\
        \begin{tikzpicture}[anchorbase]
          \draw[<-] (0,0.3) arc(90:450:0.3);
          \draw[->] (-0.8,-1) -- (-0.8,1);
          \redcircle{(0.3,0)} node[anchor=west,color=black] {$r$};
          \bluedot{(-0.3,0)} node[anchor=east,color=black] {$f$};
        \end{tikzpicture}
        \ - \sum_{t \ge 0} \sum_{s=0}^t (-1)^{\bar a \bar b}\
        \begin{tikzpicture}[anchorbase]
          \draw[<-] (0.3,-0.2) arc(90:450:0.3);
          \draw[->] (-0.8,-1) -- (-0.8,1);
          \redcircle{(0.6,-0.5)} node[anchor=west,color=black] {\dotlabel{r-t-2}};
          \bluedot{(0,-0.5)} node[anchor=east,color=black] {$\chk{a}f$};
          \bluedot{(-0.8,0.6)} node[anchor=west,color=black] {$b \psi^{-s}(a) \psi^{-t}(\chk{b})$};
          \redcircle{(-0.8,0.1)} node[anchor=west,color=black] {$t$};
        \end{tikzpicture}
        \ ,
      \end{equation}
      \begin{equation} \label{rel:counterclockwise-bubble-slide}
        \begin{tikzpicture}[anchorbase]
          \draw[->] (0,0.3) arc(90:450:0.3);
          \draw[->] (-0.8,-1) -- (-0.8,1);
          \redcircle{(0.3,0)} node[anchor=west,color=black] {$r$};
          \bluedot{(-0.3,0)} node[anchor=east,color=black] {$f$};
        \end{tikzpicture}
        \ =\
        \begin{tikzpicture}[anchorbase]
          \draw[->] (0,0.3) arc(90:450:0.3);
          \draw[->] (0.8,-1) -- (0.8,1);
          \redcircle{(0.3,0)} node[anchor=west,color=black] {$r$};
          \bluedot{(-0.3,0)} node[anchor=east,color=black] {$f$};
        \end{tikzpicture}
        \ - \sum_{t \ge 0} \sum_{s=0}^t (-1)^{\bar a \bar b}\
        \begin{tikzpicture}[anchorbase]
          \draw[->] (-2.2,-0.2) arc(90:450:0.3);
          \draw[->] (-0.5,-1) -- (-0.5,1);
          \redcircle{(-1.9,-0.5)} node[anchor=west,color=black] {\dotlabel{r-t-2}};
          \bluedot{(-2.5,-0.5)} node[anchor=east,color=black] {$\chk{a}f$};
          \bluedot{(-0.5,0.6)} node[anchor=west,color=black] {$b \psi^{-s}(a) \psi^{-t}(\chk{b})$};
          \redcircle{(-0.5,0.1)} node[anchor=west,color=black] {$t$};
        \end{tikzpicture}
        \ .
      \end{equation}

    \item \emph{Alternating braid relation}:
      \begin{equation} \label{rel:braid-alternating}
        \begin{tikzpicture}[anchorbase]
          \draw[->] (0,0) -- (1,1);
          \draw[->] (1,0) -- (0,1);
          \draw[<-] (0.5,0) .. controls (0,0.5) .. (0.5,1);
        \end{tikzpicture}
        \ -\
        \begin{tikzpicture}[anchorbase]
          \draw[->] (0,0) -- (1,1);
          \draw[->] (1,0) -- (0,1);
          \draw[<-] (0.5,0) .. controls (1,0.5) .. (0.5,1);
        \end{tikzpicture}
        \ =\
        \begin{cases}
          \sum_{r,s,t \ge 0}
          \begin{tikzpicture}[anchorbase]
            \draw[<-] (0,2) -- (0,1.7) arc (180:360:0.3) -- (0.6,2);
            \draw[->] (0,-0.1) -- (0,0.5) arc (180:0:0.3) -- (0.6,-0.1);
            \draw[<-] (1,1.4) arc(90:-270:0.3);
            \draw[->] (2.3,-0.1) -- (2.3,2);
            \redcircle{(0.6,1.7)} node[anchor=east,color=black] {$r$};
            \bluedot{(0,1.7)} node[anchor=east,color=black] {$a$};
            \redcircle{(0.6,0.5)} node[anchor=west,color=black] {$s$};
            \bluedot{(0,0.5)} node[anchor=east,color=black] {$\chk{e}$};
            \bluedot{(1.3,1.1)} node[anchor=west,color=black] {$\chk{a} b$};
            \redcircle{(0.7,1.1)} node[anchor=east,color=black] {\dotlabel{-r-s-t-3}};
            \bluedot{(2.3,0.8)} node[anchor=west,color=black] {$e$};
            \redcircle{(2.3,0.5)} node[anchor=west,color=black] {$t$};
            \bluedot{(2.3,0.2)} node[anchor=west,color=black] {$\chk{b}$};
          \end{tikzpicture}
          & \text{if } k \ge 2,
          \\
          0 & \text{if } -1 \le k \le 1,
          \\
          \sum_{r,s,t \ge 0} (-1)^{\bar a \bar b + \bar a + \bar b + \bar b \bar e}\
          \begin{tikzpicture}[anchorbase]
            \draw[<-] (0,0) -- (0,0.2) arc(180:0:0.3) -- (0.6,0);
            \draw[->] (0,2) -- (0,1.5) arc(180:360:0.3) -- (0.6,2);
            \draw[->] (1.3,1.15) arc(90:-270:0.3);
            \draw[->] (-0.8,0) -- (-0.8,2);
            \bluedot{(0.6,1.8)} node[anchor=west,color=black] {\dotlabel{e \psi^{-r}(\chk{b})}};
            \redcircle{(0.6,1.5)} node[anchor=west,color=black] {$r$};
            \redcircle{(0.6,0.2)} node[anchor=west,color=black] {$s$};
            \bluedot{(0,0.2)} node[anchor=west,color=black] {$a$};
            \bluedot{(1.6,0.85)} node[anchor=west,color=black] {$\chk{a} b$};
            \redcircle{(1,0.85)} node[anchor=east,color=black] {\dotlabel{-r-s-t-3}};
            \bluedot{(-0.8,1.15)} node[anchor=east,color=black] {$\chk{e}$};
            \redcircle{(-0.8,1.6)} node[anchor=east,color=black] {$t$};
          \end{tikzpicture}
          & \text{if } k \le -2.
        \end{cases}
      \end{equation}
  \end{enumerate}
\end{theo}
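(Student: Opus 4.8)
The plan is to derive every relation directly from \cref{def:H}, making free use of \cref{theo:alternate-presentation}: thus we work with the morphisms $c',d'$, the left crossing $t'$, the negatively dotted bubbles of \cref{theo-eq:neg-ccbubble,theo-eq:neg-cbubble}, and the two double-crossing expansions \cref{theo-eq:doublecross-up-down,theo-eq:doublecross-down-up}. The overarching template is the case $F=\kk$ treated by Brundan in \cite{Bru17}; the genuinely new bookkeeping is the presence of tokens and of the Nakayama automorphism $\psi$, which is inserted each time a dot or token is dragged past a cup or cap by \cref{rel:dot-token-up-slide,eq:Nakayama-def}.

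I would begin with the left adjunction relations \cref{rel:zigzag-leftdown,rel:zigzag-leftup}. Since $c'$ and $d'$ are produced in \cref{sec:inversion} as matrix coefficients of the inverse of the matrix in \cref{eq:inversion-relation-pos-l} (respectively \cref{eq:inversion-relation-neg-l}), the two snake identities follow by composing the equations ``matrix $\circ$ inverse $=\id$'' and ``inverse $\circ$ matrix $=\id$'' with the evident projections and inclusions and simplifying the remaining matrix coefficients using the bubble and curl evaluations \cref{theo-eq:right-curl,theo-eq:clockwise-circ,theo-eq:left-curl,theo-eq:counterclockwise-circ}. With both adjunctions available, the rotation relations \cref{rel:token-rotation,rel:dot-rotation,rel:crossing-rotation} come from the standard ``drag the decoration around a bend'' argument: for a token one applies \cref{rel:dot-token-up-slide} each time the token is pulled past the cup or the cap, the cumulative effect being multiplication by $\psi^{k}$ (consistently with $|c'|=-k\Delta$); the dot version is the same computation but now a curl is created, which is resolved by \cref{rel:left-dotted-curl} and accounts for the bubble correction term; and \cref{rel:crossing-rotation} reduces to \cref{rel:braid-up} together with the two adjunctions.

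Next I would treat the curl relations \cref{rel:left-dotted-curl,rel:right-dotted-curl}, the infinite grassmannian relations \cref{eq:inf-grass1,eq:inf-grass2,eq:inf-grass3}, and the bubble slides \cref{rel:clockwise-bubble-slide,rel:counterclockwise-bubble-slide}. For a dotted curl one inserts the identity-plus-correction expansion of a mixed double crossing from \cref{theo-eq:doublecross-up-down} or \cref{theo-eq:doublecross-down-up}, slides the dot across the crossing using \cref{rel:dotslide1,rel:dotslide2}, and collapses the free strand into a dotted bubble; iterating \cref{rel:dotslide1} produces exactly the sum over $s$. The relations \cref{eq:inf-grass1,eq:inf-grass2} extend \cref{theo-eq:clockwise-circ,theo-eq:counterclockwise-circ} to all admissible $r$: for $r<0$ this is immediate from the determinant convention, since the defining matrix in \cref{theo-eq:neg-ccbubble,theo-eq:neg-cbubble} then has a row of bubbles of degree too low to be nonzero, while for $0\le r<k$ (respectively $0\le r<-k$) it is \cref{theo-eq:clockwise-circ,theo-eq:counterclockwise-circ} themselves. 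The identity \cref{eq:inf-grass3} is the generating-function statement that the two families of bubbles are mutually inverse; I would prove it by induction on $t$ using \cref{eq:inf-grass1,eq:inf-grass2} together with the cofactor recursion built into \cref{eq:determinant-convention,theo-eq:neg-ccbubble,theo-eq:neg-cbubble}. For the bubble slides one drags a dotted bubble across a vertical strand by resolving its two crossings with the strand via \cref{theo-eq:doublecross-up-down} (or its down-up analogue), normalising with \cref{rel:dotslide1,rel:dotslide2,rel:dot-token-up-slide}, and collecting terms; the nested sum $\sum_{t\ge 0}\sum_{s=0}^{t}$ and the composite token $b\,\psi^{-s}(a)\,\psi^{-t}(\chk b)$ emerge from iterating these moves, and the sign $(-1)^{\bar a\bar b}$ from the super-interchange law.

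The main obstacle will be the alternating braid relation \cref{rel:braid-alternating}, which, exactly as in \cite{Bru17}, is the most delicate point. Starting from the mixed configuration on the left, one bends the middle strand using the adjunctions, applies \cref{rel:braid-up} together with the double-crossing expansions \cref{theo-eq:doublecross-up-down,theo-eq:doublecross-down-up}, and is left with a triple sum of diagrams that must be matched term by term with the right-hand side. The trichotomy in $k$ (namely $k\ge 2$, $-1\le k\le 1$, and $k\le -2$) records which negatively dotted bubble corrections survive: the middle range collapses cleanly because the relevant corrections vanish, but for $|k|\ge 2$ one must track the dot exponents ($-r-s-t-3$ and the like) and, in the $k\le -2$ case, the composite sign $(-1)^{\bar a\bar b+\bar a+\bar b+\bar b\bar e}$ and the $\psi^{-r}$ on the surviving token, which is precisely where the Frobenius-superalgebra version diverges from Brundan's and where sign errors are easiest to make. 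I would hope to reduce the $k\le -2$ case to the $k\ge 2$ case via a symmetry of the construction that interchanges $\sQ_+$ with $\sQ_-$ and replaces $F$ by $F^{\op}$, hence $k$ by $-k$, so that only one genuinely new triple-sum identity needs to be verified in full.
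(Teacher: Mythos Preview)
Your overall strategy mirrors the paper's, but the order you propose is not viable: the left adjunction relations do \emph{not} drop out of the matrix identities ``matrix $\circ$ inverse $=\id$'' and ``inverse $\circ$ matrix $=\id$'' together with \cref{theo-eq:right-curl,theo-eq:clockwise-circ,theo-eq:left-curl,theo-eq:counterclockwise-circ}. Those matrix identities only give \cref{rel:up-down-doublecross,rel:down-up-doublecross}, the vanishing of \emph{undecorated} curls, and the bubble evaluations for $0\le r<|k|$. The undecorated left cup/cap are by definition the $(\pm k-1,1)$ entries of the inverse matrix (see \cref{eq:left-cup-def,eq:left-cap-def}), and unwinding the zigzag produces a curl carrying $|k|$ dots, which is outside the range of \cref{theo-eq:right-curl,theo-eq:left-curl}. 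In the paper the left adjunction (\cref{lem:left-zigzag}) is proved only \emph{after} the infinite grassmannian relations, the general dotted curl relations \cref{rel:left-dotted-curl,rel:right-dotted-curl}, and a preliminary pitchfork lemma (\cref{lem:partial-pitchfork}); all three are genuinely needed in the computation.

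This ordering issue also contaminates your treatment of the rotation relations: you invoke \cref{rel:left-dotted-curl} to resolve the curl created in the dot rotation, but you only propose to establish the curl relations in the \emph{next} block. And your mechanism for the token rotation is not quite right: the $\psi^k$ does not come from repeated applications of \cref{rel:dot-token-up-slide} (which slides a token past a dot on a single strand), but from the separate fact that tokens pick up a $\psi^k$ when sliding over a left cup or cap, namely \cref{rel:f-left-cap,rel:f-left-cup}, which in the paper is its own lemma with a nontrivial proof depending on \cref{rel:down-up-doublecross} and \cref{rel:leftcurl-l-negative,rel:ccc-l-negative}. The correct dependency order is: infinite grassmannian $\Rightarrow$ dotted curls $\Rightarrow$ partial pitchfork $\Rightarrow$ left adjunction $\Rightarrow$ full pitchfork and rotation $\Rightarrow$ bubble slides $\Rightarrow$ alternating braid. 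Your idea of obtaining the $k\le -2$ alternating braid case from the $k\ge 2$ case via the symmetry $\omega$ of \cref{lem:flip-symmetry} is sound and is exactly how the paper organises several of the arguments.
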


The terminology \emph{infinite grassmannian relations} for \cref{eq:inf-grass1,eq:inf-grass2,eq:inf-grass3} comes from the analogy with the defining relations for the cohomology ring of the infinite grassmannian.  When $F=\kk$, these relations can also be interpreted at the relations between the elementary and complete symmetric functions.  (See, for example, \cite[(1.21)]{Bru17}.)  Thus, the results of the current paper suggest a theory of deformations of symmetric functions depending on a graded Frobenius superalgebra.

Taking $t=1$ in \cref{eq:inf-grass3} and using \cref{eq:inf-grass1,eq:inf-grass2,eq:f-in-basis,eq:f-in-dual-basis} gives
\begin{equation} \label{eq:central-bubble-reverse}
  \begin{tikzpicture}[anchorbase]
    \draw[->] (0,0.3) arc(90:450:0.3);
    \redcircle{(0.3,0)} node[anchor=west,color=black] {$-k$};
    \bluedot{(-0.3,0)} node[anchor=east,color=black] {$f$};
  \end{tikzpicture}
  \ =\
  \begin{tikzpicture}[anchorbase]
    \draw[<-] (0,0.3) arc(90:450:0.3);
    \redcircle{(-0.3,0)} node[anchor=east,color=black] {$k$};
    \bluedot{(0.3,0)} node[anchor=west,color=black] {$f$};
  \end{tikzpicture}
  \qquad \text{and} \qquad
  \begin{tikzpicture}[anchorbase]
    \draw[->] (0,0.3) arc(90:450:0.3);
    \redcircle{(-0.3,0)} node[anchor=east,color=black] {$-k$};
    \bluedot{(0.3,0)} node[anchor=west,color=black] {$f$};
  \end{tikzpicture}
  \ =\
  \begin{tikzpicture}[anchorbase]
    \draw[<-] (0,0.3) arc(90:450:0.3);
    \redcircle{(0.3,0)} node[anchor=west,color=black] {$k$};
    \bluedot{(-0.3,0)} node[anchor=east,color=black] {$f$};
  \end{tikzpicture}
  \qquad \text{for all } f \in F.
\end{equation}
\details{
  The first equation follows from the second by sliding the dot over the right cup/cap and the token over the left cup/cap.  Then the tokens carry labels of $\psi^{-k}(f)$, which is fine since we quantify over all $f \in F$ and $\psi$ is an automorphism.
}

It follows from the bubble slide relations \cref{rel:clockwise-bubble-slide,rel:counterclockwise-bubble-slide} that the bubbles
\begin{equation} \label{eq:central-bubbles}
  \circled{$f$} :=\
  \begin{tikzpicture}[anchorbase]
    \draw[->] (0,0.3) arc(90:450:0.3);
    \redcircle{(0.3,0)} node[anchor=west,color=black] {$-k$};
    \bluedot{(-0.3,0)} node[anchor=east,color=black] {$f$};
  \end{tikzpicture}
  \ =\
  \begin{tikzpicture}[anchorbase]
    \draw[<-] (0,0.3) arc(90:450:0.3);
    \redcircle{(-0.3,0)} node[anchor=east,color=black] {$k$};
    \bluedot{(0.3,0)} node[anchor=west,color=black] {$f$};
  \end{tikzpicture}
  \ ,\quad f \in F,
\end{equation}
which have the same degree and parity as $f$, are strictly central:
\[
  \begin{tikzpicture}[anchorbase]
    \draw[->] (-0.8,-0.5) -- (-0.8,0.5);
  \end{tikzpicture}
  \ \circled{$f$}
  \ = \circled{$f$}\
  \begin{tikzpicture}[anchorbase]
    \draw[->] (0.8,-0.5) -- (0.8,0.5);
  \end{tikzpicture}
  \qquad \text{and} \qquad
  \begin{tikzpicture}[anchorbase]
    \draw[<-] (-0.8,-0.5) -- (-0.8,0.5);
  \end{tikzpicture}
  \ \circled{$f$}
  \ = \circled{$f$}\
  \begin{tikzpicture}[anchorbase]
    \draw[<-] (0.8,-0.5) -- (0.8,0.5);
  \end{tikzpicture}
  \ .
\]
We can therefore impose additional relations on these bubbles.  If $R$ is a set of homogeneous relations involving these bubbles, we let $\Heis_{F,k}(R)$ denote the supercategory obtained from $\Heis_{F,k}$ by imposing the additional relations $R$.  For example, if
\[
  R = \left\{\circled{$f$} - \circled{$\psi(f)$} : f \in F \right\},
\]
then the sum on the right side of \cref{rel:dot-rotation} is zero, and hence the right and left mates of the dot are equal.  If we also have that $k$ is a multiple of the order of $\psi$, then we see from \cref{rel:token-rotation} that the right and left mates of tokens are equal, in which case the category $\Heis_{F,k}(R)$ is strictly pivotal.

It also follows from \cref{rel:clockwise-bubble-slide,rel:counterclockwise-bubble-slide,theo-eq:neg-ccbubble,theo-eq:neg-cbubble} that the bubbles
\[
  \begin{tikzpicture}[anchorbase]
    \draw[->] (0,0.3) arc(90:450:0.3);
    \redcircle{(0.3,0)} node[anchor=west,color=black] {$r$};
    \bluedot{(-0.3,0)} node[anchor=east,color=black] {$f$};
  \end{tikzpicture}
  \qquad \text{and} \qquad
  \begin{tikzpicture}[anchorbase]
    \draw[<-] (0,0.3) arc(90:450:0.3);
    \redcircle{(-0.3,0)} node[anchor=east,color=black] {$r$};
    \bluedot{(0.3,0)} node[anchor=west,color=black] {$f$};
  \end{tikzpicture}
  \ ,\quad r \in \Z,\ f \in F,\ |f| > 0,
\]
are central.  Indeed, consider the terms in the sum in \cref{rel:clockwise-bubble-slide}.  The token labelled $b \psi^{-s}(a) \psi^{-t}(\chk{b})$ is zero for degree reasons unless $|a|=0$.  But $|a| = 0$ implies that $|\chk{a}| = \Delta$, which in turn implies that the token labelled $\chk{a}f$ is zero whenever $f$ has positive degree.  A similar argument holds for \cref{rel:counterclockwise-bubble-slide}.  That negatively dotted bubbles are strictly central then follows from \cref{theo-eq:neg-ccbubble,theo-eq:neg-cbubble}.

\begin{theo} \label{theo:specializations}
  \begin{enumerate}
    \item \label{theo-item:F=k-specialization} The Heisenberg category $\tilde{\cH}^\lambda$ defined in \cite{MS17} is isomorphic to the additive envelope of $\Heis_{\kk,k}(R)$, where $k = - \sum_i \lambda_i$, and $R = \left\{ \circled{$1$} - \sum_i i\lambda_i \right\}$.  In particular, the Heisenberg category $\cH'$ defined by Khovanov in \cite{Kho14} is isomorphic to the additive envelope of $\Heis_{\kk,-1}(R)$, where $R = \left\{ \circled{$1$} \right\}$.

    \item \label{theo-item:xi=-1-specialization} The Heisenberg supercategory $\cH_F'$ defined in \cite{RS17} (in the case where the trace map of $F$ is even) is isomorphic to the additive envelope of the underlying category of the $\Pi$-envelope (see \cite[\S1.5]{BE17}) of $\Heis_{F^\op,-1}(R)$, where $R = \left\{ \circled{$f$} : f \in F \right\}$.
  \end{enumerate}
\end{theo}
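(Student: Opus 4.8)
The plan is to treat both parts as comparisons between monoidal supercategories that are presented by generators and relations: in each case I would produce a pair of mutually inverse strict $\kk$-linear (graded) monoidal superfunctors by matching up generating morphisms and then verifying that the finite list of defining relations on one side is sent to relations that hold on the other. Throughout I would work with the efficient presentation of $\Heis_{F,k}$ from \cref{theo:alternate-presentation}, supplemented by the derived relations of \cref{theo:relations}, so that every relation to be checked is an explicit diagrammatic identity rather than the invertibility condition in the definition of $\Heis_{F,k}$. Once the relations have been verified, both composite functors are checked to act as the identity on generators, which makes them mutually inverse.

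For part (a), setting $F = \kk$ collapses every token $\beta_f$ to a scalar, so $\Heis_{\kk,k}$ is generated by $s,x,c,d,c',d'$ subject to the $F=\kk$ specializations of the relations in \cref{theo:alternate-presentation,theo:relations}; as recorded in the introduction, these are precisely Brundan's relations from \cite{Bru17}. Brundan proved that the category presented by those relations, once the central degree-zero bubble \circled{$1$} is set equal to the scalar $\sum_i i\lambda_i$, is a presentation of the additive envelope of $\tilde{\cH}^\lambda$ from \cite{MS17} with $k = -\sum_i \lambda_i$. Since this is exactly the normalization $R$, part (a) follows on passing to additive envelopes; Khovanov's category $\cH'$ is the case $k = -1$, in which $R$ sets the bubble to $0$.

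For part (b) I would start from the presentation of $\cH_F'$ given in \cite{RS17} and compare it, relation by relation, with the $k = -1$ specialization of \cref{theo:alternate-presentation}, after the bookkeeping adjustments recorded in the statement (pass to the $\Pi$-envelope of \cite[\S1.5]{BE17}, then to the underlying supercategory, then to the additive envelope) and after imposing $R$. The substantive points are: the token map of \cite{RS17} is, relative to ours, a homomorphism out of $F^{\op}$, because stacking tokens there multiplies them in the order opposite to \cref{rel:token-colide-up}; at $k = -1$ we have $-k-1 = 0$, so the inversion relation \cref{eq:inversion-relation-neg-l} is exactly the near-invertibility of the rightward crossing on which the definition in \cite{RS17} rests; and the curl, rotation, bubble-slide, and alternating-braid relations of \cref{theo:relations}, specialized at $k = -1$, coincide with those of \cite{RS17} once the central bubbles have been killed, which is precisely what $R$ does. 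Concretely I would define the functor from the (suitably enveloped) $\Heis_{F^{\op},-1}(R)$ to $\cH_F'$ by sending $s,x,\beta_f,c,d,c',d'$ to their evident counterparts and checking the relations of \cref{theo:alternate-presentation,theo:relations} together with $R$, then define the reverse functor on the generators of $\cH_F'$, then verify that the two composites act as the identity on generators.

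The main obstacle will be the relation-by-relation check in part (b): one must track every Koszul sign from the super structure, handle the sign $(-1)^{\bar f \bar g}$ produced by the $\op$ when reversing products of tokens, match the powers of the Nakayama automorphism $\psi$ occurring in \cref{rel:dot-token-up-slide,rel:token-rotation,rel:dot-rotation} against the conventions of \cite{RS17}, and confirm that neither presentation needs a relation not implied by the other --- in particular that imposing $R$ does not over-collapse $\End(\one)$. The last point should follow because the central bubbles \circled{$f$} form a polynomial-type family in $\End(\one)$ that we are quotienting by a regular family of elements, or alternatively from a basis theorem for $\Heis_{F,k}$ combined with the basis theorem of \cite{RS17}. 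Part (a), by contrast, is essentially a matter of invoking \cite{Bru17,MS17,Kho14} and matching normalizations.
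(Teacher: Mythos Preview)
Your overall plan matches the paper's: part~(a) is indeed resolved by observing that $\Heis_{\kk,k}$ coincides with Brundan's category and then citing \cite[Th.~1.4]{Bru17}, and part~(b) is handled by constructing mutually inverse strict monoidal functors using the presentation of \cref{theo:alternate-presentation}.

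There is one concrete point you gloss over in part~(b). The category $\cH_F'$ of \cite{RS17} is generated by crossings, cups, caps, and tokens; it has no dot generator, so there is no ``evident counterpart'' for $x$. The paper sends $x$ to the \emph{right curl} in $\cH_F'$. To make the inverse work it first computes, from \cref{rel:right-dotted-curl} at $r=0$ and $k=-1$, that in $\Heis_{F,-1}$ the right curl equals the dot plus a correction lying in the ideal generated by the central bubbles \circled{$f$}; imposing $R$ therefore makes the right curl equal to the dot in $\Heis_{F^{\op},-1}(R)$. This identity is exactly what is needed to see that the two functors compose to the identity on $x$. Without this observation your plan stalls at the step of saying where $x$ goes and why the composites agree there.

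Your worry about $R$ over-collapsing $\End(\one)$ is unnecessary: once a two-sided inverse functor exists, nothing extra can have been killed. The paper does not argue via bases or regularity; it simply notes that the forward functor sends each \circled{$f$} to a figure-eight diagram in $\cH_F'$, which vanishes because left curls are zero there, so the functor descends to the quotient, and then checks the inverse on generators.
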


We note also that when $F$ is the two-dimensional Clifford superalgebra, the supercategory $\Heis_{F,k}$ is studied in \cite{CK18}.  In particular, when $k=0$, this supercategory reduces to the degenerate affine oriented Brauer--Clifford supercategory of \cite{CK17}.

The definition of the supercategory $\Heis_{F,k}$ is inspired by the affine wreath product algebras studied in \cite{Sav17}.  One benefit of the inversion relation presentation of \cref{def:H} is that it makes it relatively straightforward to verify that $\Heis_{F,k}$ acts naturally on suitable categories.  In particular, we now describe how $\Heis_{F,k}$ acts naturally on modules over cyclotomic quotients of affine wreath product algebras.  For the remainder of this introduction, we suppose that the Nakayama auotmorphism $\psi$ has finite order $\theta$, and that $k<0$.  (Analogous results could be formulated in the case $k > 0$.)

Following \cite[\S3.1]{Sav17}, we define the \emph{affine wreath product algebra} $\cA_n(F)$ to be the graded superalgebra that is the free product of superalgebras
\[
  \kk[x_1,\dotsc,x_n] \star F^{\otimes n} \star \kk S_n,
\]
modulo the relations
\begin{align*}
  \bsf x_i &= x_i \psi_i(\bsf),& 1 \le i \le n,\ \bsf \in F^{\otimes n}, \\
  s_i x_j &= x_j s_i,& 1 \le i \le n-1,\ 1 \le j \le n,\ j \ne i,i+1, \\
  s_i x_i &= x_{i+1} s_i - b_i \chk{b}_{i+1},& 1 \le i \le n-1, \\
  \pi \bsf &= \prescript{\pi}{}{\bsf} \pi,& \pi \in S_n,\ \bsf \in F^{\otimes n},
\end{align*}
where $\psi_i = \id^{\otimes (i-1)} \otimes \psi \otimes \id^{\otimes (n-i)}$, $f_i = 1^{\otimes (i-1)} \otimes f \otimes 1^{\otimes (n-i)} \in F^{\otimes n}$ ($f \in F$), and $\prescript{\pi}{}{\bsf}$ denotes the natural action of $\pi$ on $\bsf$ by superpermutation of the factors.  The degree and parity on $\cA_n(F)$ are determined by the degree and parity of elements of $F^{\otimes n}$, together with
\[
  |x_i| = \Delta,\quad \bar x_i = 0,\quad
  |\pi| = 0,\quad \bar \pi = 0,\quad
  1 \le i \le n,\ \pi \in S_n,
\]
By convention, we set $\cA_0(F) = \kk$.

For $1 \le r \le \theta$, choose $n_r \ge 0$ and even elements of degree $r \Delta$
\[
  c^{(r,1)},\dotsc,c^{(r,n_r)} \in \left\{ f \in F : \psi(f) = f,\ gf = (-1)^{\bar f \bar g} f \psi^r(g) \text{ for all } g \in F \right\}
\]
such that $\sum_{r=1}^\theta r n_r = -k$.  Let $\bC = (c^{(1,1)}, \dotsc, c^{(1,n_1)},\dotsc, c^{(\theta,1)}, \dotsc, c^{(\theta,n_\theta)})$.  As in \cite[\S6.2]{Sav17}, for $n \ge 0$, we define the \emph{cyclotomic wreath product algebra} to be the quotient $\cA_n^\bC(F) = \cA_n(F)/J_\bC$, where $J_\bC$ is the two-sided ideal in $\cA_n(F)$ generated by the homogeneous element
\[
  \prod_{r=1}^\theta \prod_{j=1}^{n_r} \left( x_1^r - c^{(r,j)} \right).
\]
By convention, we set $\cA_0^\bC(F) = \kk$.  For $n \ge 0$, we have a natural inclusion $\cA_n^\bC(F) \hookrightarrow \cA_{n+1}^\bC(F)$.  (See \cite[\S6.5]{Sav17}.)

We claim that there is a strict $\kk$-linear supermonoidal functor
\[
  \Psi_\bC \colon \Heis_{F^\op,k} \to \END_\kk \left( \bigoplus_{n \ge 0} \cA_n^\bC(F)\text{-mod} \right)
\]
to the graded monoidal supercategory of endofunctors of the sum of the categories of left $\cA_n^\bC(F)$-modules.  The functor $\Psi_\bC$ sends $\sQ_+$ to the $\kk$-linear endofunctor taking an $\cA_n^\bC(F)$-module $M$ to the induced $\cA_{n+1}^\bC(F)$-module $\prescript{\bC}{}{\Ind}_n^{n+1} M := \cA_{n+1}^\bC(F) \otimes_{\cA_n^\bC(F)} M$ and sends $\sQ_-$ to the $\kk$-linear endofunctor taking an $\cA_n^\bC(F)$-module $M$ to the restricted $\cA_{n-1}^\bC(F)$-module $\prescript{\bC}{}{\Res}^n_{n-1} M$.  On the generating morphisms, $\Psi_\bC(x)$, $\Psi_\bC(s)$, $\Psi_\bC(c)$, $\Psi_\bC(d)$, and $\Psi_\bC(\beta_f)$, $f \in F^\op$, are the natural transformations defined on an $\cA_n^\bC(F)$-module as follows:
\begin{itemize}
  \item $\Psi_\bC(x)_M \colon {\prescript{\bC}{}{\Ind}_n^{n+1}} M \to {\prescript{\bC}{}{\Ind}_n^{n+1}} M$, $z \otimes m \mapsto z x_{n+1} \otimes m$;
  \item $\Psi_\bC(s)_M \colon {\prescript{\bC}{}{\Ind}_n^{n+2}} M \to {\prescript{\bC}{}{\Ind}_n^{n+2}}$, $z \otimes m \mapsto z s_{n+1} \otimes m$;
  \item $\Psi_\bC(c)_M \colon M \to {\prescript{\bC}{}{\Res}^{n+1}_n} {\prescript{\bC}{}{\Ind}_n^{n+1}} M$, $m \mapsto 1 \otimes m$;
  \item $\Psi_\bC(d)_M \colon {\prescript{\bC}{}{\Ind}_{n-1}^n} {\prescript{\bC}{}{\Res}^n_{n-1}} M \to M$, $z \otimes m \mapsto zm$;
  \item $\Psi_\bC(\beta_f)_M \colon {\prescript{\bC}{}{\Ind}_n^{n+1}} M \to {\prescript{\bC}{}{\Ind}_n^{n+1}} M$, $z \otimes m \mapsto (-1)^{\bar z \bar f} z f_{n+1} \otimes m$.
\end{itemize}

To prove that $\Psi_\bC$ is well defined, it suffices to verify that it preserves the defining relations from \cref{def:H}.  The affine wreath product algebra relations are straightforward to verify.
\details{
  When verifying \cref{rel:dotslide1}, we use the fact that the left dual basis to the basis $\{\chk{b} : b \in B\}$ of $F^\op$ is $\{(-1)^b : b \in B\}$.
}
The right adjunction relations follow from the fact that $\Psi_\bC(c)$ and $\Psi_\bC(d)$ are the unit and counit of the canonical adjunction making $\left( \prescript{\bC}{}{\Ind}_n^{n+1}, \prescript{\bC}{}{\Res}^{n+1}_n \right)$ into an adjoint pair.  Using \cref{eq:t-def}, we see that $\Psi_\bC(t)$ is the natural transformation coming from the bimodule homomorphism
\[
  \cA_n^\bC(F) \otimes_{\cA_{n-1}^\bC(F)} \cA_n^\bC(F) \to \cA_{n+1}^\bC(F),\quad z \otimes w \mapsto z s_n w.
\]
\details{
  We have
  \[
    \Psi_\bC(t)_M \colon
    z \otimes w \otimes m
    \stackrel{c}{\mapsto} z \otimes w \otimes m
    \stackrel{s}{\mapsto} z s_n \otimes w \otimes m
    \stackrel{d}{\mapsto} z s_n w \otimes m.
  \]
}
Then, the fact that $\Psi_\bC$ preserves the inversion relation follows from the fact that the bimodule isomorphism
\begin{gather*}
  \cA_n^\bC(F) \otimes_{\cA_{n-1}^\bC(F)} \cA_n^\bC(F) \oplus \bigoplus_{r=0}^{-k-1} \bigoplus_{b \in B} \cA_n^\bC(F) \to \cA_{n+1}^\bC(F),
  \\
  (z \otimes z',(w_{r,b})_{r,b}) \mapsto z s_n z' + \sum_{r=0}^{-k-1} \sum_{b \in B} x_{n+1}^r (1^{\otimes n} \otimes \chk{b}) w_{r,b},
\end{gather*}
is an isomorphism, which follows immediately from \cite[Prop.~6.17]{Sav17}.

The supercategory $\Heis_{F,k}$ categorifies a certain lattice Heisenberg algebra, as we now explain.  Again, we suppose that $k < 0$, although an analogous result could be stated in the case that $k > 0$.  Assume that $\kk$ is an algebraically closed field of characteristic zero, and let $\Kar \Heis_{F,k}$ denote the additive Karoubi envelope of the underlying category of the $\Pi$-envelope of $\Heis_{F,k}$ (see \cite[\S1.5]{BE17}).

Let
\[
  \Z_{q,\pi} =
  \begin{cases}
    \Z[q,q^{-1},\pi]/(\pi^2-1) & \text{if all simple left $F$-modules are of type $M$}, \\
    \Z[\frac{1}{2},q,q^{-1}] \cong \Z[\frac{1}{2},q,q^{-1},\pi]/(\pi-1) & \text{if $F$ has a simple left module of type $Q$},
  \end{cases}
\]
where $q$ and $\pi$ are formal parameters.  Then the split Grothendieck group $K_0(F)$ of the category of finitely-generated projective left $F$-modules is naturally a $\Z_{q,\pi}$-module, where $q$ and $\pi$ act by the grading shift and parity shift, respectively.  Similarly, the split Grothendieck ring $K_0(\Kar \Heis_{F,k})$ of $\Kar \Heis_{F,k}$ is naturally a $\Z_{q,\pi}$-algebra.

We equip $K_0(F)$ with a nondegenerate symmetric sesquilinear form
\[
  \langle -,- \rangle_k \colon K_0(F) \times K_0(F) \to \Z_{q,\pi},\quad
  \langle [M_1], [M_2] \rangle_k = (1+q+\dotsb+q^{-k-1}) \grdim \HOM_F(M_1,M_2),
\]
for finitely-generated projective $F$-modules $M_1$ and $M_2$.  (We extend by sesquilinearity.)  Let $\rh_{F,k}$ be the lattice Heiseberg $\Z_{q,\pi}$-algebra associated to the lattice $K_0(F)$ as in \cite[Def.~2.1]{LRS18}.

\begin{theo} \label{theo:categorification}
  Suppose $R$ is a set of bubble relations (that is, a set of homogeneous elements of the algebra generated by the bubbles of the form \cref{eq:central-bubbles}).  Then we have an injective homomorphism of algebras $\Phi \colon \rh_{F,k} \to K_0(\Kar \Heis_{F,k}(R))$.  Furthermore, if $\Delta > 0$ (that is, $F$ is not concentrated in degree zero) and, for each $f \in F$ of degree zero, the span of the elements of $R$ contains $\circled{$f$} - a_f$ for some $a_f \in \kk$, then the homomorphism $\Phi$ is also surjective.
\end{theo}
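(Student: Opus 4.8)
The plan is to combine the action functor $\Psi_\bC$ constructed just above with the structure theory of cyclotomic wreath product algebras from \cite{Sav17}, following the categorification strategy that is by now standard for Heisenberg-type categories (as in \cite{BSW1,MS17}). First I would set up the relevant Fock-space picture: for a suitable family of parameters $\bC$ as above (with $\sum_r r n_r = -k$), the Grothendieck group $\bigoplus_{n\ge 0} K_0(\cA_n^\bC(F)\text{-mod})$ carries commuting actions of induction and restriction, and the results of \cite{Sav17} on the representation theory of $\cA_n^\bC(F)$ (in particular a Mackey-type filtration and the branching rules) show that this Grothendieck group is a Fock space for the lattice Heisenberg algebra $\rh_{F,k}$, with the bilinear form matching $\langle-,-\rangle_k$ because the Hom-spaces between induced projectives are governed exactly by the factor $1+q+\dots+q^{-k-1}$ coming from the degrees of the Jucys--Murphy-type generator $x_1$ modulo the cyclotomic relation. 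Passing through $\Psi_\bC$, the symmetric functions in bubbles act as scalars, so $\Psi_\bC$ descends to an action of $K_0(\Kar\Heis_{F,k}(R))$ on this Fock space that intertwines $\Phi$ with the standard Fock representation of $\rh_{F,k}$.

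For injectivity of $\Phi$: I would use the faithfulness of the Fock representation of the lattice Heisenberg algebra $\rh_{F,k}$ (this is part of \cite[Def.~2.1 and the surrounding discussion]{LRS18}), together with the fact — which must be checked from \cite{Sav17} — that for an appropriate choice of $\bC$ the realisation of $\rh_{F,k}$ inside $\End$ of the Fock space given by $\Psi_\bC\circ\Phi$ really is the standard one (raising operators are classes of $\prescript{\bC}{}{\Ind}$, lowering operators classes of $\prescript{\bC}{}{\Res}$). Then any element of $\rh_{F,k}$ killed by $\Phi$ acts as zero on the Fock space, hence is zero. The one subtlety is that a single choice of $\bC$ only gives $-k = \sum_r r n_r$ for that specific $k<0$; since $k$ is fixed throughout, this is not an obstruction, but one does need that such $\bC$ exist, which follows from taking $n_1 = -k$, $c^{(1,j)} = 0$ (noting $0$ lies in the prescribed subset of $F$).

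For surjectivity under the hypotheses $\Delta>0$ and $R\supseteq\{\circled{$f$}-a_f : f\in F_0\}$: the idea is a triangularity/filtration argument on $\Kar\Heis_{F,k}(R)$. Using the alternate presentation of \cref{theo:alternate-presentation} together with the relations of \cref{theo:relations} — especially the curl relations \cref{rel:left-dotted-curl,rel:right-dotted-curl}, the bubble slides \cref{rel:clockwise-bubble-slide,rel:counterclockwise-bubble-slide}, and the infinite grassmannian relations \cref{eq:inf-grass1,eq:inf-grass2,eq:inf-grass3} — every morphism space $\HOM(\sQ_{\pm}\cdots,\sQ_{\pm}\cdots)$ becomes, after imposing $R$, a finitely generated module whose ``leading terms'' are indexed by products of $\uparrow$-dots/tokens and $\downarrow$-dots/tokens with bubbles reduced to scalars; the key input is that $\Delta>0$ forces all positively-dotted nonzero bubbles to have strictly positive degree, so only finitely many bubble monomials survive in each degree, and the relations in $R$ kill the remaining degree-zero bubbles. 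One then identifies the indecomposable objects of $\Kar\Heis_{F,k}(R)$: via $\Psi_\bC$ they correspond to (shifts/parity-twists of) indecomposable projective $\cA_n^\bC(F)$-modules, whose classes generate $K_0$ of the Fock space as a $\Z_{q,\pi}$-module, and these classes all lie in the image of $\Phi$ by the Fock-space description. I expect \textbf{the main obstacle to be precisely this last identification} — showing that $\Kar\Heis_{F,k}(R)$ has no ``extra'' indecomposables beyond those visible in the module categories $\cA_n^\bC(F)\text{-mod}$ — which requires a basis theorem for the Hom-spaces of $\Heis_{F,k}$ (a straightening/normal-form result in the spirit of \cite[\S1.7--1.9]{Bru17}) so that one can count $\grdim\HOM$ and match it against $\grdim\HOM_{\cA^\bC}$; establishing that basis theorem, or importing it from the companion analysis in \cref{sec:inversion}, is the technical heart of the proof.
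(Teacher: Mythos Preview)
Your approach diverges from the paper's in a way that matters. The paper does \emph{not} route the argument through $\Psi_\bC$ and the Fock space. Instead it works internally in $\Kar\Heis_{F,k}(R)$: it constructs explicit symmetrizer idempotents $e_{f,(n)}$, defines objects $\sQ_\pm^{f,(n)}$, and proves directly in the Karoubi envelope (\cref{lem:commutation-identity-decomp} and the subsequent Proposition) the isomorphisms
\[
\sQ_-^{f,(n)}\sQ_+^{g,(m)} \cong \bigoplus_{r\ge 0} S^r\big(gFf\otimes\kk[x]/(x^{-k})\big)\otimes\sQ_+^{g,(m-r)}\sQ_-^{f,(n-r)},
\]
together with the easy commutation relations and the $Q$-type relations. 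On $K_0$ these are precisely the defining relations of $\rh_{F,k}$, so $\Phi$ exists; injectivity and the remaining steps are then imported from \cite[\S10]{RS17}.

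The real gap is in your surjectivity argument. You correctly flag that your route needs a basis theorem for the morphism spaces of $\Heis_{F,k}$, but no such theorem is proved here---indeed the paper remarks just after the theorem statement that a basis theorem is exactly what would be needed to \emph{remove} the hypotheses on $\Delta$ and $R$. The paper's substitute for a basis theorem is a pure degree argument: every closed diagram reduces (via the relations of \cref{sec:inversion}) to a polynomial in bubbles; by \cref{eq:inf-grass1,eq:inf-grass2,eq:inf-grass3} these can all be taken counterclockwise, and by \cref{eq:inf-grass2} they have nonnegative degree. The hypothesis on $R$ then forces every degree-zero closed diagram to be a scalar. Combined with $\Delta>0$, this pins down the degree-zero piece of $\END(\sQ_+^m\sQ_-^n)$ enough that one can classify its idempotents following \cite[\S10]{RS17}, without ever knowing a spanning set for the full Hom-spaces. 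So the point you are missing is that the two hypotheses in the surjectivity statement are precisely the replacement for the basis theorem you think you need; your proposed route would prove a stronger result than the one claimed, and is correspondingly harder.
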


\Cref{theo:categorification} generalizes and unifies the categorification results of \cite{Kho14,CL12,HS16,RS17,MS17}.  We expect that the surjectivity statement in \cref{theo:categorification} also holds without the assumption that $\Delta > 0$ and without the assumption on $R$.  The assumption on $R$ could likely be dropped if one had a basis theorem for the morphism spaces of $\Heis_{F,k}$.  We are hopeful that such a basis theorem can be proved using the methods of the recent paper \cite{BSW1}, and we plan to look at this in future work.  The assumption $\Delta > 0$ is used to prove that one has found all the idempotents of the objects $\sQ_+^m \sQ_-^n$, $n,m \ge 0$.  It is for this reason that the categorifications considered in \cite{Kho14} and \cite{MS17} (where $\Delta=0$) were conjectural.  However, these conjectures have recently been proved in \cite[Th.~1.1]{BSW1}.  If, in addition to a basis theorem, one could compute the split Grothendieck group of the affine wreath product algebras, the methods of \cite{BSW1} should allow one to also remove the assumption $\Delta > 0$ in \cref{theo:categorification}.

Although, for simplicity, we assume in this paper that the trace map of the graded Frobenius superalgebra $F$ is even, one could just as well consider the case where it is odd.  In this case, the morphism $x$ (corresponding diagrammatically to the dot) would be odd and one of the left adjunction relations \cref{rel:zigzag-leftdown,rel:zigzag-leftup} would involve a sign.  In a different direction, one can define natural $q$-deformations of the affine wreath product algebras of \cite{Sav17}; this is done in \cite{RS19}.  This leads to a $q$-deformation of the supercategory $\Heis_{F,k}$ \cite{BS}.  In the case where $F=\kk$ and $k=0$, one recovers the affine oriented skein category of \cite[\S4]{Bru17b}.  In the case where $F=\kk$ and $k=-1$, one recovers the quantum Heisenberg category of \cite{BSW2} (see also \cite{LS13}).

We believe that the results of the current paper illustrate a broader phenomenon: that many of the categories appearing in invariant theory and categorification can be deformed to incorporate a graded Frobenius algebra.  Such deformations should unify many related constructions that are currently treated separately, as well as provide natural generalizations of current constructions and results.

\iftoggle{detailsnote}{
\subsection*{Hidden details} For the interested reader, the tex file of the \href{https://arxiv.org/abs/1802.01626}{arXiv version} of this paper includes hidden details of some straightforward computations and arguments that are omitted in the pdf file.  These details can be displayed by switching the \texttt{details} toggle to true in the tex file and recompiling.
}{}

\section{Consequences of the inversion relation \label{sec:inversion}}

In this section, we systematically analyse the consequences of the inversion relation.  We will see that this relation implies a surprising number of very natural relations in our category.

\subsection{Graded Frobenius superalgebras \label{subsec:Frobenius-alg}}

For all $f \in F$, we have

\noindent\begin{minipage}{0.5\linewidth}
  \begin{equation} \label{eq:f-in-basis}
    f = \tr (\chk{b} f) b.
  \end{equation}
\end{minipage}%
\begin{minipage}{0.5\linewidth}
  \begin{equation} \label{eq:f-in-dual-basis}
    f = \tr (f b) \chk{b}.
  \end{equation}
\end{minipage}\par\vspace{\belowdisplayskip}

\noindent It follows that, for all $f \in F$,
\begin{equation} \label{eq:f-Euler-commute}
  fb \otimes \chk{b}
  = b \otimes \chk{b}f.
\end{equation}
\details{
  We have
  \[
    fb \otimes \chk{b}
    \stackrel{\cref{eq:f-in-basis}}{=} \tr(\chk{a}fb)a \otimes \chk{b}
    = a \otimes \tr(\chk{a}fb) \chk{b}
    \stackrel{\cref{eq:f-in-dual-basis}}{=} a \otimes \chk{a}f.
  \]
}
This implies that tokens can ``teleport'' in $\Heis_{F,k}$ in the sense that, for $f \in F$, we have
\[
  \begin{tikzpicture}[anchorbase]
    \draw[->] (0,0) --(0,1);
    \draw[->] (0.5,0) -- (0.5,1);
    \bluedot{(0,0.6)} node[anchor=east,color=black] {$fb$};
    \bluedot{(0.5,0.4)} node[anchor=west,color=black] {$\chk{b}$};
  \end{tikzpicture}
  =
  \begin{tikzpicture}[anchorbase]
    \draw[->] (0,0) --(0,1);
    \draw[->] (0.5,0) -- (0.5,1);
    \bluedot{(0,0.6)} node[anchor=east,color=black] {$b$};
    \bluedot{(0.5,0.4)} node[anchor=west,color=black] {$\chk{b}f$};
  \end{tikzpicture}
  \ ,
\]
where the strands can occur anywhere in a diagram (i.e.\ they do not need to be adjacent).

Since the trace map is even, we have $\bar b = \overline{\chk{b}}$ for all $b \in B$.  We also have
\begin{equation} \label{eq:double-dual}
  \chk{(\chk{b})} = (-1)^{\bar b} \psi^{-1}(b).
\end{equation}
\details{
  For $b,c \in B$, we have
  \[
    \delta_{b,c}
    = \tr(\chk{b}c)
    = (-1)^{\bar b} \tr \left( \psi^{-1}(c) \chk{b} \right),
  \]
  which implies that $\chk{(\chk{b})} = (-1)^{\bar b} \psi^{-1}(b)$.
}

\subsection{Right mates}

We define the right mates
\begin{equation} \label{eq:right-mates}
  x' =
  \begin{tikzpicture}[anchorbase]
    \draw[<-] (0,0) -- (0,1);
    \redcircle{(0,0.5)};
  \end{tikzpicture}
  \ :=\
  \begin{tikzpicture}[anchorbase]
    \draw[->] (0,1) -- (0,0.4) arc(180:360:0.2) -- (0.4,0.6) arc(180:0:0.2) -- (0.8,0);
    \redcircle{(0.4,0.5)};
  \end{tikzpicture}
  \ ,\qquad
  s' =
  \begin{tikzpicture}[anchorbase]
    \draw [<-](0,0) -- (0.6,0.6);
    \draw [<-](0.6,0) -- (0,0.6);
  \end{tikzpicture}
  \ :=\
  \begin{tikzpicture}[anchorbase]
    \draw[<-] (0.3,0) -- (-0.3,1);
    \draw[->] (-0.75,1) -- (-0.75,0.5) .. controls (-0.75,0.2) and (-0.5,0) .. (0,0.5) .. controls (0.5,1) and (0.75,0.8) .. (0.75,0.5) -- (0.75,0);
  \end{tikzpicture}
  \ ,\qquad
  \beta_f' =
  \begin{tikzpicture}[anchorbase]
    \draw[->] (0,1) -- (0,0);
    \bluedot{(0,0.5)} node[anchor=east,color=black] {$f$};
  \end{tikzpicture}
  \ :=\
  \begin{tikzpicture}[anchorbase]
    \draw[->] (0,1) -- (0,0.4) arc(180:360:0.2) -- (0.4,0.6) arc(180:0:0.2) -- (0.8,0);
    \bluedot{(0.4,0.5)} node[anchor=east,color=black] {$f$};
  \end{tikzpicture}
  \ ,\ f \in F.
\end{equation}
Using \cref{rel:right-adjunction-up,rel:right-adjunction-down}, we immediately have that
\begin{equation} \label{eq:F-to-sQ_-}
  F \to \End \sQ_-,\quad f \mapsto \beta'_f,
\end{equation}
is an anti-homomorphism of graded superalgebras, that is, that
\[
  \begin{tikzpicture}[anchorbase]
    \draw[<-] (0,0) -- (0,1);
    \bluedot{(0,0.35)} node[anchor=east,color=black] {$g$};
    \bluedot{(0,0.7)} node[anchor=east,color=black] {$f$};
  \end{tikzpicture}
  \ = (-1)^{\bar f \bar g}\
  \begin{tikzpicture}[anchorbase]
    \draw[<-] (0,0) -- (0,1);
    \bluedot{(0,0.5)} node[anchor=west,color=black] {$gf$};
  \end{tikzpicture}
  ,\quad f,g \in F,
\]
and that, for all $f \in F$,

\noindent\begin{minipage}{0.5\linewidth}
  \begin{equation} \label{rel:dot-right-cupcap}
    \begin{tikzpicture}[anchorbase]
      \draw[->] (0,0) -- (0,-0.3) arc (180:360:.3) -- (0.6,0);
      \redcircle{(0,-0.3)};
    \end{tikzpicture}
    \ =\
    \begin{tikzpicture}[anchorbase]
      \draw[->] (0,0) -- (0,-0.3) arc (180:360:.3) -- (0.6,0);
      \redcircle{(0.6,-0.3)};
    \end{tikzpicture}
    \ ,\qquad
    \begin{tikzpicture}[anchorbase]
      \draw[->] (0,0) -- (0,0.3) arc (180:0:.3) -- (0.6,0);
      \redcircle{(0,0.3)};
    \end{tikzpicture}
    \ =\
    \begin{tikzpicture}[anchorbase]
      \draw[->] (0,0) -- (0,0.3) arc (180:0:.3) -- (0.6,0);
      \redcircle{(0.6,0.3)};
    \end{tikzpicture}
    \ ,
  \end{equation}
\end{minipage}%
\begin{minipage}{0.5\linewidth}
  \begin{equation} \label{eq:f-right-cupcap-slide}
    \begin{tikzpicture}[anchorbase]
      \draw[->] (0,0) -- (0,-0.3) arc (180:360:.3) -- (0.6,0);
      \bluedot{(0,-0.3)} node[anchor=west,color=black] {$f$};
    \end{tikzpicture}
    \ =\
    \begin{tikzpicture}[anchorbase]
      \draw[->] (0,0) -- (0,-0.3) arc (180:360:.3) -- (0.6,0);
      \bluedot{(0.6,-0.3)} node[anchor=east,color=black] {$f$};
    \end{tikzpicture}
    \ ,\qquad
    \begin{tikzpicture}[anchorbase]
      \draw[->] (0,0) -- (0,0.3) arc (180:0:.3) -- (0.6,0);
      \bluedot{(0,0.3)} node[anchor=west,color=black] {$f$};
    \end{tikzpicture}
    \ =\
    \begin{tikzpicture}[anchorbase]
      \draw[->] (0,0) -- (0,0.3) arc (180:0:.3) -- (0.6,0);
      \bluedot{(0.6,0.3)} node[anchor=east,color=black] {$f$};
    \end{tikzpicture}
    \ ,
  \end{equation}
\end{minipage}\par\vspace{\belowdisplayskip}

\begin{equation} \label{rel:pitchforks-right}
  \begin{tikzpicture}[anchorbase]
    \draw[<-] (0.8,0) .. controls (0.8,-0.7) and (0.3,-0.7) .. (0,0);
    \draw[<-] (0.4,0) -- (0,-0.7);
  \end{tikzpicture}
  \ =\
  \begin{tikzpicture}[anchorbase]
    \draw[->] (0,0) .. controls (0,-0.7) and (0.5,-0.7) .. (0.8,0);
    \draw[<-] (0.4,0) -- (0.8,-0.7);
  \end{tikzpicture}
  \ ,\qquad
  \begin{tikzpicture}[anchorbase]
    \draw[<-] (0.8,0) .. controls (0.8,-0.7) and (0.3,-0.7) .. (0,0);
    \draw[->] (0.4,0) -- (0,-0.7);
  \end{tikzpicture}
  \ =\
  \begin{tikzpicture}[anchorbase]
    \draw[->] (0,0) .. controls (0,-0.7) and (0.5,-0.7) .. (0.8,0);
    \draw[->] (0.4,0) -- (0.8,-0.7);
  \end{tikzpicture}
  \ ,\qquad
  \begin{tikzpicture}[anchorbase]
    \draw[<-] (0.8,0) .. controls (0.8,0.7) and (0.3,0.7) .. (0,0);
    \draw[->] (0.4,0) -- (0,0.7);
  \end{tikzpicture}
  \ =\
  \begin{tikzpicture}[anchorbase]
    \draw[->] (0,0) .. controls (0,0.7) and (0.5,0.7) .. (0.8,0);
    \draw[->] (0.4,0) -- (0.8,0.7);
  \end{tikzpicture}
  \ ,\qquad
  \begin{tikzpicture}[anchorbase]
    \draw[<-] (0.8,0) .. controls (0.8,0.7) and (0.3,0.7) .. (0,0);
    \draw[<-] (0.4,0) -- (0,0.7);
  \end{tikzpicture}
  \ =\
  \begin{tikzpicture}[anchorbase]
    \draw[->] (0,0) .. controls (0,0.7) and (0.5,0.7) .. (0.8,0);
    \draw[<-] (0.4,0) -- (0.8,0.7);
  \end{tikzpicture}
  \ .
\end{equation}
Furthermore, attaching right caps to the top and right cups to the bottom of the affine wreath product algebra relations gives the following relations for all $f \in F$:

\noindent\begin{minipage}{0.33\linewidth}
  \begin{equation} \label{rel:doublecross-down}
    \begin{tikzpicture}[anchorbase]
      \draw[<-] (0,0) .. controls (0.5,0.5) .. (0,1);
      \draw[<-] (0.5,0) .. controls (0,0.5) .. (0.5,1);
    \end{tikzpicture}
    \ =\
    \begin{tikzpicture}[anchorbase]
      \draw[<-] (0,0) --(0,1);
      \draw[<-] (0.5,0) -- (0.5,1);
    \end{tikzpicture}
    \ ,
  \end{equation}
\end{minipage}%
\begin{minipage}{0.33\linewidth}
  \begin{equation} \label{rel:braid-down}
    \begin{tikzpicture}[anchorbase]
      \draw[<-] (0,0) -- (1,1);
      \draw[<-] (1,0) -- (0,1);
      \draw[<-] (0.5,0) .. controls (0,0.5) .. (0.5,1);
    \end{tikzpicture}
    \ =\
    \begin{tikzpicture}[anchorbase]
      \draw[<-] (0,0) -- (1,1);
      \draw[<-] (1,0) -- (0,1);
      \draw[<-] (0.5,0) .. controls (1,0.5) .. (0.5,1);
    \end{tikzpicture}
    \ ,
  \end{equation}
\end{minipage}
\begin{minipage}{0.33\linewidth}
  \begin{equation} \label{rel:braid-down-up-up}
    \begin{tikzpicture}[anchorbase]
      \draw[->] (0,0) -- (1,1);
      \draw[<-] (1,0) -- (0,1);
      \draw[->] (0.5,0) .. controls (0,0.5) .. (0.5,1);
    \end{tikzpicture}
    \ =\
    \begin{tikzpicture}[anchorbase]
      \draw[->] (0,0) -- (1,1);
      \draw[<-] (1,0) -- (0,1);
      \draw[->] (0.5,0) .. controls (1,0.5) .. (0.5,1);
    \end{tikzpicture}
    \ .
  \end{equation}
\end{minipage}\par\vspace{\belowdisplayskip}

\begin{equation} \label{rel:dot-token-down-slide}
  \begin{tikzpicture}[anchorbase]
    \draw[<-] (0,0) -- (0,1);
    \redcircle{(0,0.7)};
    \bluedot{(0,0.4)} node[anchor=east, color=black] {$f$};
  \end{tikzpicture}
  \ =\
  \begin{tikzpicture}[anchorbase]
    \draw[<-] (0,0) -- (0,1);
    \redcircle{(0,0.4)};
    \bluedot{(0,0.7)} node[anchor=west, color=black] {$\psi(f)$};
  \end{tikzpicture}
  \ ,
\end{equation}

\noindent\begin{minipage}{0.5\linewidth}
  \begin{equation} \label{eq:tokenslide-rightcross1}
    \begin{tikzpicture}[anchorbase]
      \draw[->] (0,0) -- (1,1);
      \draw[<-] (1,0) -- (0,1);
      \bluedot{(.25,.25)} node [anchor=south east, color=black] {$f$};
    \end{tikzpicture}
    \ =\
    \begin{tikzpicture}[anchorbase]
      \draw[->](0,0) -- (1,1);
      \draw[<-](1,0) -- (0,1);
      \bluedot{(0.75,.75)} node [anchor=north west, color=black] {$f$};
    \end{tikzpicture}
    \ ,
  \end{equation}
\end{minipage}%
\begin{minipage}{0.5\linewidth}
  \begin{equation} \label{eq:tokenslide-rightcross2}
    \begin{tikzpicture}[anchorbase]
      \draw[->] (0,0) -- (1,1);
      \draw[<-] (1,0) -- (0,1);
      \bluedot{(.25,.75)} node [anchor=north east, color=black] {$f$};
    \end{tikzpicture}
    \ =\
    \begin{tikzpicture}[anchorbase]
      \draw[->](0,0) -- (1,1);
      \draw[<-](1,0) -- (0,1);
      \bluedot{(0.75,.25)} node [anchor=south west, color=black] {$f$};
    \end{tikzpicture}
    \ ,
  \end{equation}
\end{minipage}\par\vspace{\belowdisplayskip}

\noindent\begin{minipage}{0.5\linewidth}
  \begin{equation} \label{eq:tokenslide-downcross1}
    \begin{tikzpicture}[anchorbase]
      \draw[<-](0,0) -- (1,1);
      \draw[<-](1,0) -- (0,1);
      \bluedot{(0.75,.75)} node [anchor=north west, color=black] {$f$};
    \end{tikzpicture}
    \ =\
    \begin{tikzpicture}[anchorbase]
      \draw[<-] (0,0) -- (1,1);
      \draw[<-] (1,0) -- (0,1);
      \bluedot{(.25,.25)} node [anchor=south east, color=black] {$f$};
    \end{tikzpicture}
    \ ,
  \end{equation}
\end{minipage}%
\begin{minipage}{0.5\linewidth}
  \begin{equation} \label{eq:tokenslide-downcross2}
    \begin{tikzpicture}[anchorbase]
      \draw[<-] (0,0) -- (1,1);
      \draw[<-] (1,0) -- (0,1);
      \bluedot{(.25,.75)} node [anchor=north east, color=black] {$f$};
    \end{tikzpicture}
    \ =\
    \begin{tikzpicture}[anchorbase]
      \draw[<-](0,0) -- (1,1);
      \draw[<-](1,0) -- (0,1);
      \bluedot{(0.75,.25)} node [anchor=south west, color=black] {$f$};
    \end{tikzpicture}
    \ ,
  \end{equation}
\end{minipage}\par\vspace{\belowdisplayskip}

\noindent\begin{minipage}{0.5\linewidth}
  \begin{equation} \label{rel:dotslide-right1}
    \begin{tikzpicture}[anchorbase]
      \draw[->] (0,0) -- (1,1);
      \draw[<-] (1,0) -- (0,1);
      \redcircle{(0.75,0.75)};
    \end{tikzpicture}
    \ -\
    \begin{tikzpicture}[anchorbase]
      \draw[->] (0,0) -- (1,1);
      \draw[<-] (1,0) -- (0,1);
      \redcircle{(0.25,0.25)};
    \end{tikzpicture}
    \ =\
    (-1)^{\bar b}
    \begin{tikzpicture}[anchorbase]
      \draw[->] (0,1.1) -- (0,0.9) arc(180:360:0.3) -- (0.6,1.1);
      \draw[->] (0,-0.1) -- (0,0.1) arc(180:0:0.3) -- (0.6,-0.1);
      \bluedot{(0,0.1)} node[anchor=west,color=black] {$b$};
      \bluedot{(0.6,0.9)} node[anchor=west,color=black] {$\chk{b}$};
    \end{tikzpicture}\ ,
  \end{equation}
\end{minipage}%
\begin{minipage}{0.5\linewidth}
  \begin{equation} \label{rel:dotslide-right2}
    \begin{tikzpicture}[anchorbase]
      \draw[->] (0,0) -- (1,1);
      \draw[<-] (1,0) -- (0,1);
      \redcircle{(0.25,0.75)};
    \end{tikzpicture}
    \ -\
    \begin{tikzpicture}[anchorbase]
      \draw[->] (0,0) -- (1,1);
      \draw[<-] (1,0) -- (0,1);
      \redcircle{(0.75,0.25)};
    \end{tikzpicture}
    \ =\
    \begin{tikzpicture}[anchorbase]
      \draw[->] (0,1.1) -- (0,0.9) arc(180:360:0.3) -- (0.6,1.1);
      \draw[->] (0,-0.1) -- (0,0.1) arc(180:0:0.3) -- (0.6,-0.1);
      \bluedot{(0,0.1)} node[anchor=east,color=black] {$\chk{b}$};
      \bluedot{(0.6,0.9)} node[anchor=east,color=black] {$b$};
    \end{tikzpicture}\ ,
  \end{equation}
\end{minipage}\par\vspace{\belowdisplayskip}

\noindent\begin{minipage}{0.5\linewidth}
  \begin{equation} \label{rel:dotslide-down1}
    \begin{tikzpicture}[anchorbase]
      \draw[<-] (0,0) -- (1,1);
      \draw[<-] (1,0) -- (0,1);
      \redcircle{(0.75,0.25)};
    \end{tikzpicture}
    \ -\
    \begin{tikzpicture}[anchorbase]
      \draw[<-] (0,0) -- (1,1);
      \draw[<-] (1,0) -- (0,1);
      \redcircle{(0.25,0.75)};
    \end{tikzpicture}
    \ =\
    \begin{tikzpicture}[anchorbase]
      \draw[<-] (0,0) -- (0,1);
      \draw[<-] (0.5,0) -- (0.5,1);
      \bluedot{(0.5,0.4)} node[anchor=west, color=black] {$\chk{b}$};
      \bluedot{(0,0.7)} node[anchor=east, color=black] {$b$};
    \end{tikzpicture}\ ,
  \end{equation}
\end{minipage}%
\begin{minipage}{0.5\linewidth}
  \begin{equation} \label{rel:dotslide-down2}
    \begin{tikzpicture}[anchorbase]
      \draw[<-] (0,0) -- (1,1);
      \draw[<-] (1,0) -- (0,1);
      \redcircle{(0.75,0.75)};
    \end{tikzpicture}
    \ -\
    \begin{tikzpicture}[anchorbase]
      \draw[<-] (0,0) -- (1,1);
      \draw[<-] (1,0) -- (0,1);
      \redcircle{(0.25,0.25)};
    \end{tikzpicture}
    \ =\
    \begin{tikzpicture}[anchorbase]
      \draw[<-] (0,0) -- (0,1);
      \draw[<-] (0.5,0) -- (0.5,1);
      \bluedot{(0,0.4)} node[anchor=east, color=black] {$\chk{b}$};
      \bluedot{(0.5,0.7)} node[anchor=west, color=black] {$b$};
    \end{tikzpicture}\ .
  \end{equation}
\end{minipage}\par\vspace{\belowdisplayskip}

\begin{lem} \label{lem:flip-symmetry}
  There is an isomorphism of monoidal supercategories $\omega \colon \Heis_{F,k} \xrightarrow{\cong} \Heis^\op_{F^\op,-k}$ interchanging the objects $\sQ_+$ and $\sQ_-$ and defined on the generating morphisms by
  \[
    \omega(x) = x',\quad
    \omega(s) = -s',\quad
    \omega(c) = d,\quad
    \omega(d) = c,\quad
    \omega(\beta_f) = \beta'_f,\ f \in F.
  \]
\end{lem}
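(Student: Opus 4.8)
The plan is to use the presentation of $\Heis_{F,k}$ from \cref{def:H}. Since that supercategory is generated by the objects $\sQ_+,\sQ_-$ and the morphisms $x,s,c,d,\beta_f$ subject to the equational relations \cref{rel:token-homom,rel:braid-up,rel:doublecross-up,rel:dot-token-up-slide,rel:tokenslide-up-right,rel:dotslide1,rel:right-adjunction-up,rel:right-adjunction-down} together with the inversion relation, giving a strict $\kk$-linear graded monoidal superfunctor out of it into $\Heis^\op_{F^\op,-k}$ amounts to choosing images of the generators for which the images of the equational relations hold and for which the image of the inversion matrix \cref{eq:inversion-relation-pos-l} (resp.\ \cref{eq:inversion-relation-neg-l}) is invertible in the additive envelope of $\Heis^\op_{F^\op,-k}$. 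So the first thing I would note is that this assignment is at least sensible: $F^\op$ is again a nonnegatively graded Frobenius superalgebra with the same grading, even trace, and top degree $\Delta$ as $F$, and with Nakayama automorphism $\psi^{-1}$, so that $\omega$ is degree- and parity-preserving ($|x'|=\Delta$, $|s'|=0$, $|c|=|d|=0$, $|\beta'_f|=|f|$, and parities match). Then I would verify the relations and produce an inverse.

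For the equational relations almost all the work is already done. Applying $\omega$ to \cref{rel:token-homom} produces the assertion that $f\mapsto\beta'_f$ is a superalgebra anti-homomorphism into $\End\sQ_-$, which is \cref{eq:F-to-sQ_-} for $\Heis_{F^\op,-k}$; applying $\omega$ to \cref{rel:braid-up,rel:doublecross-up,rel:dot-token-up-slide,rel:tokenslide-up-right,rel:dotslide1} produces (up to the sign conventions described below) the downward-strand relations \cref{rel:braid-down,rel:doublecross-down,rel:dot-token-down-slide,eq:tokenslide-downcross1,eq:tokenslide-downcross2,rel:dotslide-down1,rel:dotslide-down2} for $\Heis_{F^\op,-k}$, all of which were obtained earlier in this section by attaching right cups and caps to the affine wreath product relations; and \cref{rel:right-adjunction-up,rel:right-adjunction-down} are interchanged with each other by $\omega$ (using $\omega(c)=d$, $\omega(d)=c$ and the reversal of composition in $\op$). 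The care needed here is purely in the signs: one must track the Koszul factors coming from the super interchange law and from the signed tensor product of morphisms in $\Heis^\op$; the passage to the opposite token multiplication (which turns the homomorphism \cref{rel:token-homom} into an anti-homomorphism and brings in the signs relating the left dual basis of $F^\op$ to $\{\chk b\}$, together with \cref{eq:double-dual}); and the parity of the number of crossings appearing in each diagram --- it is precisely this last point that forces $\omega(s)=-s'$, since, e.g., both sides of the braid relation \cref{rel:braid-up} carry three crossings (so the two signs $(-1)^3$ cancel), whereas \cref{rel:doublecross-up} carries an even number of crossings on each side and needs no sign. A bookkeeping check of the same kind shows $\omega$ takes the rightward crossing $t$ of \cref{eq:t-def} to the leftward crossing $t'$ of \cref{eq:t-def-alt}, up to sign.

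I expect the inversion relation to be the main obstacle. Since functors preserve isomorphisms and extend canonically to additive envelopes, it suffices to check that $\omega$ sends the inversion matrix of $\Heis_{F,k}$ to the inversion matrix of $\Heis_{F^\op,-k}$, up to permuting summands and rescaling entries by units. For $k\ge0$, $\omega$ turns the column matrix $\sQ_+\sQ_-\to\sQ_-\sQ_+\oplus\one^{\oplus k\dim F}$ of \cref{eq:inversion-relation-pos-l} into a row matrix $\sQ_+\sQ_-\oplus\one^{\oplus k\dim F}\to\sQ_-\sQ_+$ --- exactly the shape of \cref{eq:inversion-relation-neg-l} for the level $-k\le0$ --- with first entry $\omega(t)=\pm t'$ and remaining entries the images of the decorated cup--cap morphisms of \cref{eq:inversion-relation-pos-l}; using the rotation identities for dots and tokens on right cups and caps (\cref{rel:dot-right-cupcap,eq:f-right-cupcap-slide,rel:pitchforks-right}) and re-indexing the bound basis element through the left dual basis of $F^\op$, one identifies these with the decorated cup--cap morphisms of \cref{eq:inversion-relation-neg-l} for $F^\op$. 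Getting this identification right, with all the attendant signs, is the delicate step; the case $k<0$ is symmetric.

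Finally, to see $\omega$ is an isomorphism I would run the same construction with $(F,k)$ replaced by $(F^\op,-k)$, obtaining a functor $\upsilon\colon\Heis_{F^\op,-k}\to\Heis^\op_{F,k}$, and check that $\upsilon^\op\circ\omega=\id_{\Heis_{F,k}}$ and $\omega\circ\upsilon^\op=\id_{\Heis^\op_{F^\op,-k}}$. Each of these reduces to a check on generators: on $c$ and $d$ it is immediate, and on $x$, $s$, $\beta_f$ it reduces to the statement that forming a right mate twice (in the appropriate sense) returns the original morphism, a consequence of the right adjunction relations \cref{rel:right-adjunction-up,rel:right-adjunction-down}. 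This gives the claimed isomorphism of monoidal supercategories.
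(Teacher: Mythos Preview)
Your proposal is correct and follows essentially the same strategy as the paper: verify that the defining relations of $\Heis_{F,k}$ hold under $\omega$ by appealing to the right-mate relations \cref{eq:F-to-sQ_-,rel:doublecross-down,rel:braid-down,rel:dot-token-down-slide,eq:tokenslide-downcross2,rel:dotslide-down2} already derived, then exhibit an inverse. Two small points of comparison are worth making.

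First, the paper's treatment of the inverse is more economical than yours: rather than constructing a separate $\upsilon$ and checking $\upsilon^\op\circ\omega=\id$, it simply observes (using the right adjunction relations \cref{rel:right-adjunction-up,rel:right-adjunction-down}) that $\omega(x')=x$ and $\omega(s')=-s$, whence $\omega^2=\id$ directly. Your $\upsilon$ is of course just $\omega$ again with $(F,k)$ replaced by $(F^\op,-k)$, so the two arguments coincide once unwound; the paper's phrasing just avoids the extra layer of notation.

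Second, your claim that $\omega(t)=\pm t'$ is not quite right. Tracking the objects, $\omega(t)$ is a morphism $\sQ_-\sQ_+\to\sQ_+\sQ_-$ in $\Heis^\op_{F^\op,-k}$, hence a morphism $\sQ_+\sQ_-\to\sQ_-\sQ_+$ in $\Heis_{F^\op,-k}$; this is the type of the \emph{right} crossing, and indeed one computes $\omega(t)=-t$ there (consistent with the paper's description of $\omega$ as horizontal reflection together with a sign per crossing). This does not damage your argument, since for the inversion relation you only need that a functor carries an invertible matrix to an invertible matrix; but it is worth correcting, and note that $t'$ has not even been defined at the point in the paper where this lemma appears.
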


\begin{proof}
  The functor $\omega$ preserves the affine wreath product algebra relations by \cref{eq:F-to-sQ_-,rel:doublecross-down,rel:braid-down,rel:braid-down-up-up,rel:dot-token-down-slide,eq:tokenslide-downcross2,rel:dotslide-down2}.  The inversion relation is also preserved since we replace $k$ by $-k$.  By the right adjunction relations, we have $\omega(x') = x$ and $\omega(s')=-s$.  Thus $\omega^2 = \id$.  Hence $\omega$ is an isomorphism.
\end{proof}

Diagrammatically, $\omega$ reflects diagrams in the horizontal axis (multiplying by the appropriate sign when odd elements change height), and then multiplies by $(-1)^r$, where $r$ is the total number of crossings appearing in the diagram.  This description also holds for the other morphisms to be defined below, except that the sign becomes $(-1)^{r+r'}$, where $r$ is the total number of crossings and $r'$ is the total number of undecorated left cups and caps (see \cref{cor:omega-left-cupcap}).

\subsection{Left crossings, cups, and caps}

We define
\[
  t' =
  \begin{tikzpicture}[anchorbase]
    \draw [<-](0,0) -- (0.6,0.6);
    \draw [->](0.6,0) -- (0,0.6);
  \end{tikzpicture}
  \ \colon \sQ_- \sQ_+ \to \sQ_+ \sQ_-
\]
and
\begin{equation} \label{eq:decorated-left-cap-cup}
  \begin{tikzpicture}[>=To,baseline={([yshift=1ex]current bounding box.center)}]
    \draw[<-] (0,0.2) -- (0,0) arc (180:360:.3) -- (0.6,0.2);
    \greensquare{(0.3,-0.3)} node[anchor=north,color=black] {\squarelabel{(r,b)}};
  \end{tikzpicture}
  \ \colon \one \to \sQ_+ \sQ_-
  ,\qquad
  \begin{tikzpicture}[>=To,baseline={([yshift=-2ex]current bounding box.center)}]
    \draw[<-] (0,-0.2) -- (0,0) arc (180:0:.3) -- (0.6,-0.2);
    \greensquare{(0.3,0.3)} node[anchor=south,color=black] {\squarelabel{(r,b)}};
  \end{tikzpicture}
  \ \colon \sQ_- \sQ_+ \to \one.
\end{equation}
for $0 \le r < k$ or $0 \le r < -k$, respectively, by declaring that
\begin{equation} \label{eq:inversion-leftcup-def}
  \left[
    \begin{tikzpicture}[anchorbase]
      \draw [<-](0,0) -- (0.6,0.6);
      \draw [->](0.6,0) -- (0,0.6);
    \end{tikzpicture}
    \quad
    \begin{tikzpicture}[>=To,baseline={([yshift=1ex]current bounding box.center)}]
      \draw[<-] (0,0.2) -- (0,0) arc (180:360:.3) -- (0.6,0.2);
      \greensquare{(0.3,-0.3)} node[anchor=north,color=black] {\squarelabel{(r,b)}};
    \end{tikzpicture},\
    0 \le r \le k-1,\ b \in B
  \right]
  =
  \left(\left[
    \begin{tikzpicture}[anchorbase]
      \draw [->](0,0) -- (0.6,0.6);
      \draw [<-](0.6,0) -- (0,0.6);
    \end{tikzpicture}
    \quad
    \begin{tikzpicture}[anchorbase]
      \draw[->] (0,0) -- (0,0.7) arc (180:0:.3) -- (0.6,0);
      \redcircle{(0,0.6)} node[anchor=west,color=black] {$r$};
      \bluedot{(0,0.2)} node[anchor=west,color=black] {$\chk{b}$};
    \end{tikzpicture}\ ,\
    0 \le r \le k-1,\ b \in B
  \right]^T \right)^{-1}
  \quad \text{if } k \ge 0,
\end{equation}
or
\begin{equation} \label{eq:inversion-leftcap-def}
  \left[
    \begin{tikzpicture}[anchorbase]
      \draw [<-](0,0) -- (0.6,0.6);
      \draw [->](0.6,0) -- (0,0.6);
    \end{tikzpicture}
    \quad
    \begin{tikzpicture}[>=To,baseline={([yshift=-2ex]current bounding box.center)}]
      \draw[<-] (0,-0.2) -- (0,0) arc (180:0:.3) -- (0.6,-0.2);
      \greensquare{(0.3,0.3)} node[anchor=south,color=black] {\squarelabel{(r,b)}};
    \end{tikzpicture},\
    0 \le r \le -k-1,\ b \in B
  \right]^T
  =
  \left[
    \begin{tikzpicture}[anchorbase]
      \draw [->](0,0) -- (0.6,0.6);
      \draw [<-](0.6,0) -- (0,0.6);
    \end{tikzpicture}
    \quad
    \begin{tikzpicture}[anchorbase]
      \draw[->] (0,1) -- (0,0.3) arc (180:360:.3) -- (0.6,1);
      \redcircle{(0.6,0.7)} node[anchor=east,color=black] {$r$};
      \bluedot{(0.6,0.3)} node[anchor=east,color=black] {$\chk{b}$};
    \end{tikzpicture}\ ,\
    0 \le r \le -k-1,\ b \in B
  \right]^{-1}
  \quad \text{if } k < 0.
\end{equation}
More precisely, we add the left crossing $t'$ and the decorated cups and caps in \cref{eq:decorated-left-cap-cup} as new generators, and impose the relations corresponding to the statements that the matrices in \cref{eq:inversion-leftcup-def} or \cref{eq:inversion-leftcap-def} are two-sided inverses.

We extend the definition of the decorated left cups and caps by linearity in the second argument of the label.  In other words, for $f \in F$, we define
\[
  \begin{tikzpicture}[>=To,baseline={([yshift=1ex]current bounding box.center)}]
    \draw[<-] (0,0.2) -- (0,0) arc (180:360:.3) -- (0.6,0.2);
    \greensquare{(0.3,-0.3)} node[anchor=north,color=black] {\squarelabel{(r,f)}};
  \end{tikzpicture}
  \ = \tr(\chk{b}f)\
  \begin{tikzpicture}[>=To,baseline={([yshift=1ex]current bounding box.center)}]
    \draw[<-] (0,0.2) -- (0,0) arc (180:360:.3) -- (0.6,0.2);
    \greensquare{(0.3,-0.3)} node[anchor=north,color=black] {\squarelabel{(r,b)}};
  \end{tikzpicture}
  \ ,\quad \text{if } k > 0,
  \qquad
  \begin{tikzpicture}[>=To,baseline={([yshift=-2ex]current bounding box.center)}]
    \draw[<-] (0,-0.2) -- (0,0) arc (180:0:.3) -- (0.6,-0.2);
    \greensquare{(0.3,0.3)} node[anchor=south,color=black] {\squarelabel{(r,f)}};
  \end{tikzpicture}
  \ = \tr(\chk{b}f)\
  \begin{tikzpicture}[>=To,baseline={([yshift=-2ex]current bounding box.center)}]
    \draw[<-] (0,-0.2) -- (0,0) arc (180:0:.3) -- (0.6,-0.2);
    \greensquare{(0.3,0.3)} node[anchor=south,color=black] {\squarelabel{(r,b)}};
  \end{tikzpicture}
  \ ,\quad \text{if } k < 0.
\]
We then define

\noindent\begin{minipage}{0.5\linewidth}
  \begin{equation} \label{eq:left-cup-def}
    c' =
    \begin{tikzpicture}[anchorbase]
      \draw[<-] (0,0.2) -- (0,0) arc (180:360:.3) -- (0.6,0.2);
    \end{tikzpicture}
    :=
    \begin{cases}
      -\
      \begin{tikzpicture}[>=To,baseline={([yshift=1ex]current bounding box.center)}]
        \draw[<-] (0,0.2) -- (0,0) arc (180:360:.3) -- (0.6,0.2);
        \greensquare{(0.3,-0.3)} node[anchor=north,color=black] {\squarelabel{(k-1,1)}};
      \end{tikzpicture}
      & \text{if } k > 0, \\
      \begin{tikzpicture}[anchorbase]
        \draw[<-] (0,0) .. controls (0.3,-0.3) and (0.6,-0.3) .. (0.6,-0.6) arc(360:180:0.3) .. controls (0,-0.3) and (0.3,-0.3) .. (0.6,0);
        \redcircle{(0.6,-0.6)} node[anchor=west,color=black] {$-k$};
      \end{tikzpicture}
      & \text{if } k \le 0,
    \end{cases}
  \end{equation}
\end{minipage}%
\begin{minipage}{0.5\linewidth}
  \begin{equation} \label{eq:left-cap-def}
    d' =
    \begin{tikzpicture}[anchorbase]
      \draw[<-] (0,-0.2) -- (0,0) arc (180:0:.3) -- (0.6,-0.2);
    \end{tikzpicture}
    :=
    \begin{cases}
      \begin{tikzpicture}[anchorbase]
        \draw[<-] (0,0) .. controls (0.3,0.3) and (0.6,0.3) .. (0.6,0.6) arc(0:180:0.3) .. controls (0,0.3) and (0.3,0.3) .. (0.6,0);
        \redcircle{(0.6,0.6)} node[anchor=west,color=black] {$k$};
      \end{tikzpicture}
      & \text{if } k \ge 0, \\
      \begin{tikzpicture}[>=To,baseline={([yshift=-2ex]current bounding box.center)}]
        \draw[<-] (0,-0.2) -- (0,0) arc (180:0:.3) -- (0.6,-0.2);
        \greensquare{(0.3,0.3)} node[anchor=south,color=black] {\squarelabel{(-k-1,1)}};
      \end{tikzpicture}
      & \text{if } k < 0.
    \end{cases}
  \end{equation}
\end{minipage}\par\vspace{\belowdisplayskip}

\noindent Since the maps \cref{eq:decorated-left-cap-cup} have degree $-|\chk{b}| - r \Delta$ and parity $\bar b$, we have
\[
  |c'| = -k \Delta,\quad
  |d'| = k \Delta,\quad
  \bar c' = \bar d' = 0.
\]

It then follows that

\noindent \begin{minipage}{0.5\linewidth}
  \begin{equation} \label{rel:up-down-doublecross}
    \begin{tikzpicture}[anchorbase]
      \draw[->] (0,0) -- (0,1);
      \draw[<-] (0.5,0) -- (0.5,1);
    \end{tikzpicture}
    \ =\
    \begin{tikzpicture}[anchorbase]
      \draw[->] (0,0) .. controls (0.5,0.5) .. (0,1);
      \draw[<-] (0.5,0) .. controls (0,0.5) .. (0.5,1);
    \end{tikzpicture}
    \ + \sum_{r=0}^{k-1}\
    \begin{tikzpicture}[anchorbase]
      \draw[->] (0,0) -- (0,0.7) arc (180:0:0.3) -- (0.6,0);
      \draw[<-] (0,2) -- (0,1.8) arc (180:360:0.3) -- (0.6,2);
      \redcircle{(0,0.6)} node[anchor=west,color=black] {$r$};
      \bluedot{(0,0.2)} node[anchor=west,color=black] {$\chk{b}$};
      \greensquare{(0.3,1.5)} node[anchor=north west,color=black] {\squarelabel{(r,b)}};
    \end{tikzpicture}
    \ ,
  \end{equation}
\end{minipage}%
\noindent\begin{minipage}{0.5\linewidth}
  \begin{equation} \label{rel:down-up-doublecross}
    \begin{tikzpicture}[anchorbase]
      \draw[<-] (0,0) -- (0,1);
      \draw[->] (0.5,0) -- (0.5,1);
    \end{tikzpicture}
    \ =\
    \begin{tikzpicture}[anchorbase]
      \draw[<-] (0,0) .. controls (0.5,0.5) .. (0,1);
      \draw[->] (0.5,0) .. controls (0,0.5) .. (0.5,1);
    \end{tikzpicture}
    \ + \sum_{r=0}^{-k-1}\
    \begin{tikzpicture}[anchorbase]
      \draw[->] (0,2) -- (0,1.3) arc (180:360:0.3) -- (0.6,2);
      \draw[<-] (0,0) -- (0,0.2) arc (180:0:0.3) -- (0.6,0);
      \redcircle{(0.6,1.7)} node[anchor=east,color=black] {$r$};
      \bluedot{(0.6,1.3)} node[anchor=east,color=black] {$\chk{b}$};
      \greensquare{(0.3,0.5)} node[anchor=west,color=black] {\squarelabel{(r,b)}};
    \end{tikzpicture}
    \ ,
  \end{equation}
\end{minipage}\par\vspace{\belowdisplayskip}

\noindent where the right sides of \cref{rel:up-down-doublecross,rel:down-up-doublecross} are sums of mutually orthogonal idempotents.  When $k > 0$, we have

\noindent\begin{minipage}{0.5\linewidth}
  \begin{equation} \label{rel:leftcurl-l-positive}
    \begin{tikzpicture}[anchorbase]
      \draw[->] (0,0) .. controls (0.3,-0.3) and (0.6,-0.3) .. (0.6,-0.6) arc(360:180:0.3) .. controls (0,-0.3) and (0.3,-0.3) .. (0.6,0);
      \bluedot{(0,-0.6)} node[anchor=east,color=black] {$f$};
    \end{tikzpicture}
    \ =\
    \begin{tikzpicture}[anchorbase]
      \draw[->] (0,0) .. controls (0.3,-0.3) and (0.6,-0.3) .. (0.6,-0.6) arc(360:180:0.3) .. controls (0,-0.3) and (0.3,-0.3) .. (0.6,0);
      \bluedot{(0.6,-0.6)} node[anchor=west,color=black] {$f$};
    \end{tikzpicture}
    \ =\
    \begin{tikzpicture}[anchorbase]
      \draw[->] (0.6,-0.5) .. controls (0.1,0) and (0,-0.1) .. (0,0.3) -- (0,0.7) arc (180:0:.3) -- (0.6,0.3) .. controls (0.6,-0.1) and (0.5,0) .. (0,-0.5);
      \redcircle{(0,0.6)} node[anchor=west,color=black] {$r$};
      \bluedot{(0,0.3)} node[anchor=west,color=black] {$f$};
    \end{tikzpicture}
    \ =0,
  \end{equation}
\end{minipage}%
\begin{minipage}{0.5\linewidth}
  \begin{equation} \label{rel:cc-l-positive}
    \begin{tikzpicture}[anchorbase]
      \draw[<-] (0,0.3) arc(90:450:0.3);
      \redcircle{(-0.3,0)} node[anchor=east,color=black] {$r$};
      \bluedot{(0.3,0)} node[anchor=west,color=black] {$f$};
    \end{tikzpicture}
    = -\delta_{r,k-1} \tr(f),
  \end{equation}
\end{minipage}\par\vspace{\belowdisplayskip}

\noindent for all $0 \le r \le k-1$ and $f \in F$.
\details{
  To see \cref{rel:leftcurl-l-positive}, note that we can slide the token above the curl using \cref{eq:tokenslide-rightcross1,eq:tokenslide-rightcross2}.  Then the curl without a token is zero by the inversion relation.   The proof of \cref{rel:leftcurl-l-negative} below is analogous.

  To see \cref{rel:cc-l-positive}, we compute
  \begin{multline*}
    \begin{tikzpicture}[anchorbase]
      \draw[<-] (0,0.3) arc(90:450:0.3);
      \redcircle{(-0.3,0)} node[anchor=east,color=black] {$r$};
      \bluedot{(0.3,0)} node[anchor=west,color=black] {$f$};
    \end{tikzpicture}
    \stackrel{\cref{eq:left-cup-def}}{=} -\
    \begin{tikzpicture}[anchorbase]
      \draw[<-] (0,0.3) arc(90:450:0.3);
      \redcircle{(-0.3,0)} node[anchor=east,color=black] {$r$};
      \bluedot{(0.3,0)} node[anchor=west,color=black] {$f$};
      \greensquare{(0,-0.3)} node[anchor=north,color=black] {\dotlabel{(k-1,1)}};
    \end{tikzpicture}
    \stackrel[\cref{rel:dot-token-up-slide}]{\cref{eq:f-right-cupcap-slide}}{=} -\
    \begin{tikzpicture}[anchorbase]
      \draw[->] (0.6,0.2) -- (0.6,-0.2) arc(360:180:0.3) -- (0,0.2) arc(180:0:0.3);
      \redcircle{(0,0.2)} node[anchor=east,color=black] {$r$};
      \bluedot{(0,-0.2)} node[anchor=east,color=black] {$\psi^r(f)$};
      \greensquare{(0.3,-0.5)} node[anchor=north,color=black] {\dotlabel{(k-1,1)}};
    \end{tikzpicture}
    \stackrel[\cref{eq:f-in-dual-basis}]{\cref{eq:f-in-basis}}{=} - \tr \left( \psi^r(f) a \right) \tr(\chk{b})
    \begin{tikzpicture}[anchorbase]
      \draw[->] (0.6,0.2) -- (0.6,-0.2) arc(360:180:0.3) -- (0,0.2) arc(180:0:0.3);
      \redcircle{(0,0.2)} node[anchor=east,color=black] {$r$};
      \bluedot{(0,-0.2)} node[anchor=east,color=black] {$\chk{a}$};
      \greensquare{(0.3,-0.5)} node[anchor=north,color=black] {\dotlabel{(k-1,b)}};
    \end{tikzpicture}
    \\
    = - \tr \left( \psi^r(f) a \right) \tr(\chk{b}) \delta_{r,k-1} \delta_{a,b}
    = - \delta_{r,k-1} \tr \left( \psi^r(f) b \right) \tr(\chk{b})
    \\
    = - \delta_{r,k-1} \tr \left( \psi^r(f) \tr(\chk{b}) b \right)
    \stackrel{\cref{eq:f-in-basis}}{=} - \delta_{r,k-1} \tr \left( \psi^r(f) \right)
    = - \delta_{r,k-1} \tr(f).
  \end{multline*}
  The proof of \cref{rel:ccc-l-negative} below is analogous.
}
When $k < 0$, we have

\noindent\begin{minipage}{0.5\linewidth}
  \begin{equation} \label{rel:leftcurl-l-negative}
    \begin{tikzpicture}[anchorbase]
      \draw[->] (0,0) .. controls (0.3,0.3) and (0.6,0.3) .. (0.6,0.6) arc(0:180:0.3) .. controls (0,0.3) and (0.3,0.3) .. (0.6,0);
      \bluedot{(0,0.6)} node[anchor=east,color=black] {$f$};
    \end{tikzpicture}
    \ =\
    \begin{tikzpicture}[anchorbase]
      \draw[->] (0,0) .. controls (0.3,0.3) and (0.6,0.3) .. (0.6,0.6) arc(0:180:0.3) .. controls (0,0.3) and (0.3,0.3) .. (0.6,0);
      \bluedot{(0.6,0.6)} node[anchor=west,color=black] {$f$};
    \end{tikzpicture}
    \ =\
    \begin{tikzpicture}[anchorbase]
      \draw[->] (0.6,0.5) .. controls (0.1,0) and (0,0.1) .. (0,-0.3) -- (0,-0.7) arc (180:360:.3) -- (0.6,-0.3) .. controls (0.6,0.1) and (0.5,0) .. (0,0.5);
      \redcircle{(0.6,-0.3)} node[anchor=east,color=black] {$r$};
      \bluedot{(0.6,-0.6)} node[anchor=east,color=black] {$f$};
    \end{tikzpicture}
    \  = 0
  \end{equation}
\end{minipage}%
\begin{minipage}{0.5\linewidth}
  \begin{equation} \label{rel:ccc-l-negative}
    \begin{tikzpicture}[anchorbase]
      \draw[->] (0,0.3) arc(90:450:0.3);
      \redcircle{(0.3,0)} node[anchor=west,color=black] {$r$};
      \bluedot{(-0.3,0)} node[anchor=east,color=black] {$f$};
    \end{tikzpicture}
    = \delta_{r,-k-1} \tr(f),
  \end{equation}
\end{minipage}\par\vspace{\belowdisplayskip}

\noindent for all $0 \le r \le -k-1$ and $f \in F$.

\begin{cor} \label{cor:omega-left-cupcap}
  If $k > 0$, we have
  \begin{equation} \label{eq:omega-greenbox}
    \omega \left(\
    \begin{tikzpicture}[anchorbase]
      \draw[<-] (0,0.2) -- (0,0) arc (180:360:.3) -- (0.6,0.2);
      \greensquare{(0.3,-0.3)} node[anchor=north,color=black] {\squarelabel{(r,f)}};
    \end{tikzpicture}
    \right)
    = (-1)^{\bar f}\
    \begin{tikzpicture}[>=To,baseline={([yshift=-2ex]current bounding box.center)}]
      \draw[<-] (0,-0.2) -- (0,0) arc (180:0:.3) -- (0.6,-0.2);
      \greensquare{(0.3,0.3)} node[anchor=south,color=black] {\squarelabel{(r,f)}};
    \end{tikzpicture},
    \quad \text{for all } 0 \le r \le k-1,\ f \in F.
  \end{equation}
  In particular, $\omega(c') = - d'$ and $\omega(d') = - c'$.
\end{cor}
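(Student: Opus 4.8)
The plan is to exploit the fact that, for $k>0$, the green cups of $\Heis_{F,k}$ are characterised --- together with $t'$ --- as the entries of the two-sided inverse, in the additive envelope, of the column matrix appearing in the inversion relation \cref{eq:inversion-relation-pos-l}, together with the fact that $\omega \colon \Heis_{F,k} \to \Heis^\op_{F^\op,-k}$ is an isomorphism of monoidal supercategories and hence carries a mutually inverse pair of matrices of morphisms to a mutually inverse pair in $\Heis_{F^\op,-k}$ (transposed, because composition is reversed). Applying $\omega$ to \cref{eq:inversion-leftcup-def} therefore yields, in $\Heis_{F^\op,-k}$, a mutually inverse pair of matrices of exactly the shape appearing in \cref{eq:inversion-leftcap-def} at level $-k < 0$ --- which is precisely the data defining the green \emph{caps} of $\Heis_{F^\op,-k}$ --- and \cref{eq:omega-greenbox} will then be read off by matching the two pairs.

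Carrying this out, I would first compute $\omega(t)$ and $\omega(t')$: using $\omega(c)=d$, $\omega(s)=-s'$, $\omega(d)=c$ together with the definitions \cref{eq:t-def}, \cref{eq:t-def-alt} and the right adjunction and crossing-rotation relations (just as in the proof of \cref{lem:flip-symmetry}), one checks that $\omega(t)$ is the rightward crossing and $\omega(t')$ the leftward crossing of $\Heis_{F^\op,-k}$. Next I would compute $\omega$ on the remaining entries of the matrix in \cref{eq:inversion-relation-pos-l}: writing such an entry as the right cap $d$ carrying an $r$-fold dot and a $\chk b$-token on its upward leg, and using $\omega(d)=c$, $\omega(x)=x'$, $\omega(\beta_f)=\beta'_f$ and monoidality, one obtains the cup $c$ decorated on a single leg; sliding the dot and token over the cup via \cref{rel:dot-right-cupcap} and \cref{eq:f-right-cupcap-slide}, reordering them via \cref{rel:dot-token-up-slide}, and rewriting the token using \cref{eq:double-dual} (and the observation that, relative to $B$, the left dual basis of $F^\op$ is the right dual basis of $F$), one finds that $\omega$ sends these entries to the dotted-cup entries of the inversion relation \cref{eq:inversion-relation-neg-l} of $\Heis_{F^\op,-k}$, up to precomposing the summand $\one^{\oplus k \dim F}$ with an invertible change of basis $\Lambda$ that is block diagonal in the dot-exponent $r$ and, within each block, intertwines $b$ with $\chk b$ up to a $\psi$-twist and a sign $(-1)^{\bar b}$. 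Inverting the resulting matrix identity then gives $\omega(t')=t'$ and $\omega$ of a green cup as a $\Lambda^{-1}$-combination of green caps; extending by linearity in the label and collapsing the signs produces \cref{eq:omega-greenbox}.

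The two special cases follow quickly. Specialising \cref{eq:omega-greenbox} at $r=k-1$, $f=1$ and using the definition \cref{eq:left-cup-def} of $c'$ (the $k>0$ case) together with the definition \cref{eq:left-cap-def} of $d'$ in $\Heis_{F^\op,-k}$ (the $k<0$ case there, which realises $d'$ as the green cap with label $(k-1,1)$) gives $\omega(c')=-d'$. For $\omega(d')=-c'$ one can either compute $\omega$ directly from the description of $d'$ in \cref{eq:left-cap-def} (the $k\ge 0$ case) --- that diagram contains a single crossing, so $\omega$ reflects it and multiplies by $-1$, turning it into the nested cup-inside-cap with a $k$-fold dot that defines $c'$ of $\Heis_{F^\op,-k}$ --- or, more economically, apply $\omega$ once more to $\omega(c')=-d'$ and invoke $\omega^2=\id$ from \cref{lem:flip-symmetry}.

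The whole difficulty is concentrated in the sign bookkeeping of the second step: one has to keep consistent track of the Koszul signs picked up by odd tokens as they slide past caps and change height, of the relation $\chk{(\chk b)}=(-1)^{\bar b}\psi^{-1}(b)$ and the passage between the dual bases of $F$ and of $F^\op$, and of the transpose forced on matrices of morphisms by the opposite-category convention, and then check that these combine into exactly the scalar $(-1)^{\bar f}$ rather than some $\psi^{\pm k}$-twist of $f$. A helpful rigidity here is that degree and parity are determined in advance --- both sides of \cref{eq:omega-greenbox} have degree $-|\chk b|-r\Delta$ and parity $\bar b$, and $\psi$ preserves degree --- so once one sees that the dot-exponent $r$ and the parity are preserved, only a sign remains to be pinned down.
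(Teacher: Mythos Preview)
Your proposal is correct and follows essentially the same route as the paper: both arguments rest on the fact that $\omega$ carries the mutually inverse pair in \cref{eq:inversion-leftcup-def} to a mutually inverse pair in $\Heis_{F^\op,-k}$, which by uniqueness must coincide with the pair in \cref{eq:inversion-leftcap-def}; the sign $(-1)^{\bar f}$ arises from the height interchange of the odd token with the green box when matching the two bubble identities. Your derivation of $\omega(d')=-c'$ via $\omega^2=\id$ is a legitimate shortcut, though the paper instead reads it off directly from \cref{eq:left-cup-def,eq:left-cap-def}.
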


\begin{proof}
  When $f \in B$, \cref{eq:omega-greenbox} follows from \cref{eq:inversion-leftcup-def,eq:inversion-leftcap-def}, together with \cref{lem:flip-symmetry}.
  \details{
    The sign comes from the fact that we interchange the order in the inversion relation.  Note also that, for $a,b \in B$ and $0 \le r,s \le k$, we have
    \[
      \begin{tikzpicture}[anchorbase]
        \draw[->] (-0.3,0) arc(-180:180:0.3);
        \greensquare{(0,0.3)} node[anchor=south,color=black] {\squarelabel{(r,b)}};
        \bluedot{(0.25,-0.17)} node[anchor=west,color=black] {$\chk{a}$};
        \redcircle{(0.25,0.15)} node[anchor=west,color=black] {$s$};
      \end{tikzpicture}
      \ = \delta_{a,b} \delta_{r,s} =\
      \begin{tikzpicture}[anchorbase]
        \draw[->] (0.3,0) arc(0:360:0.3);
        \greensquare{(0,-0.3)} node[anchor=north,color=black] {\squarelabel{(r,b)}};
        \bluedot{(-0.25,-0.17)} node[anchor=east,color=black] {$\chk{a}$};
        \redcircle{(-0.25,0.15)} node[anchor=east,color=black] {$s$};
      \end{tikzpicture}
    \]
    Interchanging the height of the square and the token introduces a sign of $(-1)^{\bar b}$, which is exactly balanced by the sign in \cref{eq:omega-greenbox}.
  }
  It then follows for arbitrary $f \in F$ by linearity.  The final statement then follows from \cref{eq:left-cup-def,eq:left-cap-def} and \cref{lem:flip-symmetry}.
\end{proof}

Relations \cref{eq:leftcup-square-dual-dot,eq:leftcap-square-dual-dot} in the following lemma will be generalized in \cref{lem:square-left-cap-cup}.

\begin{lem}
  The following relations hold for all $f \in F$:

  \noindent\begin{minipage}{0.5\linewidth}
    \begin{equation} \label{eq:tokenslide-leftcross1}
      \begin{tikzpicture}[anchorbase]
        \draw[<-] (0,0) -- (1,1);
        \draw[->] (1,0) -- (0,1);
        \bluedot{(.25,.75)} node [anchor=north east, color=black] {$f$};
      \end{tikzpicture}
      \ =\
      \begin{tikzpicture}[anchorbase]
        \draw[<-](0,0) -- (1,1);
        \draw[->](1,0) -- (0,1);
        \bluedot{(0.75,.25)} node [anchor=south west, color=black] {$f$};
      \end{tikzpicture}
      \ ,
    \end{equation}
  \end{minipage}%
  \begin{minipage}{0.5\linewidth}
    \begin{equation} \label{eq:tokenslide-leftcross2}
      \begin{tikzpicture}[anchorbase]
        \draw[<-] (0,0) -- (1,1);
        \draw[->] (1,0) -- (0,1);
        \bluedot{(.25,.25)} node [anchor=south east, color=black] {$f$};
      \end{tikzpicture}
      \ =\
      \begin{tikzpicture}[anchorbase]
        \draw[<-](0,0) -- (1,1);
        \draw[->](1,0) -- (0,1);
        \bluedot{(0.75,.75)} node [anchor=north west, color=black] {$f$};
      \end{tikzpicture}
      \ ,
    \end{equation}
  \end{minipage}\par\vspace{\belowdisplayskip}

  \noindent\begin{minipage}{0.5\linewidth}
    \begin{equation} \label{eq:leftcup-square-dual-dot}
      \begin{tikzpicture}[anchorbase]
        \draw[<-] (0,0.2) -- (0,0) arc (180:360:.3) -- (0.6,0.2);
        \greensquare{(0.3,-0.3)} node[anchor=north,color=black] {\squarelabel{(k-1,f)}};
      \end{tikzpicture}
      \ = - \
      \begin{tikzpicture}[anchorbase]
        \draw[<-] (0,0) -- (0,-0.3) arc (180:360:.3) -- (0.6,0);
        \bluedot{(0,-0.3)} node[anchor=west,color=black] {$f$};
      \end{tikzpicture}
      ,\quad k > 0,
    \end{equation}
  \end{minipage}%
  \begin{minipage}{0.5\linewidth}
    \begin{equation} \label{eq:leftcap-square-dual-dot}
      \begin{tikzpicture}[anchorbase]
        \draw[<-] (0,-0.2) -- (0,0) arc (180:0:.3) -- (0.6,-0.2);
        \greensquare{(0.3,0.3)} node[anchor=south,color=black] {\squarelabel{(-k-1,f)}};
      \end{tikzpicture}
      \ = (-1)^{\bar f}\
      \begin{tikzpicture}[anchorbase]
        \draw[<-] (0,0) -- (0,0.3) arc (180:0:.3) -- (0.6,0);
        \bluedot{(0,0.3)} node[anchor=west,color=black] {$f$};
      \end{tikzpicture}
      ,\quad k < 0.
    \end{equation}
  \end{minipage}\par\vspace{\belowdisplayskip}
\end{lem}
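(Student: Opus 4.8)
The strategy is to extract all four relations from the inversion relation, using that the matrices in \cref{eq:inversion-relation-pos-l,eq:inversion-relation-neg-l}---equivalently those in \cref{eq:inversion-leftcup-def,eq:inversion-leftcap-def}---are two-sided invertible. Assume first $k \ge 0$ and write $C \colon \sQ_+\sQ_- \to \sQ_-\sQ_+ \oplus \one^{\oplus k\dim F}$ for the invertible column of \cref{eq:inversion-relation-pos-l}, whose entries are $t$ and the decorated right caps. Reading off the entries of the identity matrix $C\circ C^{-1}$ gives, for free, $t \circ t' = \id_{\sQ_-\sQ_+}$, $(\text{decorated right cap})\circ t' = 0$, $t \circ (\text{decorated left cup}) = 0$, and $(\text{decorated right cap }(s,\chk a))\circ(\text{decorated left cup }(r,b)) = \delta_{r,s}\delta_{a,b}$. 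Since $C$ is a monomorphism, to prove an equality $A = B$ of morphisms with codomain $\sQ_+\sQ_-$ it suffices to check that $t\circ A = t\circ B$ and $(\text{decorated right cap})\circ A = (\text{decorated right cap})\circ B$. For $k < 0$ one argues symmetrically, precomposing with the invertible row of \cref{eq:inversion-relation-neg-l} (an epimorphism onto $\sQ_-\sQ_+$); the analogous free identities then read $t'\circ t = \id_{\sQ_+\sQ_-}$, $t'\circ(\text{decorated right cup}) = 0$, $(\text{decorated left cap})\circ t = 0$, and $(\text{decorated left cap }(r,a))\circ(\text{decorated right cup }(s,\chk b)) = \delta_{r,s}\delta_{a,b}$.

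For \cref{eq:tokenslide-leftcross1,eq:tokenslide-leftcross2}, let $A$ and $B$ denote the two sides: a token ($\beta_f$ on the positive strand, or $\beta_f'$ on the negative strand) placed above, respectively below, the crossing $t'$, so $A, B \colon \sQ_-\sQ_+ \to \sQ_+\sQ_-$. Applying the criterion above, the $t$-component slides the token through $t$ using the already-established crossing slides \cref{eq:tokenslide-rightcross1,eq:tokenslide-rightcross2} (which carry no correction terms) and then through $t\circ t' = \id$; the decorated-right-cap component absorbs the token into the cap using \cref{rel:token-colide-up,rel:dot-token-up-slide,eq:f-right-cupcap-slide} and linearity of the decorated cap in its Frobenius argument, after which it vanishes by $(\text{decorated right cap})\circ t' = 0$. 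Both components agree on $A$ and on $B$, so $A = B$; the case $k < 0$ is identical with the roles of $t$ and $t'$ interchanged.

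For \cref{eq:leftcup-square-dual-dot} (so $k > 0$): unwinding the definition \cref{eq:left-cup-def} of $c'$, the claim is that the decorated left cup with label $(k-1,f)$ equals the decorated left cup with label $(k-1,1)$ carrying an extra token $f$ on its positive leg; both are morphisms $\one \to \sQ_+\sQ_-$, so we again test against $C$. On the left-hand side, write $f = \tr(\chk b f)\,b$ and use $t\circ(\text{left cup }(k-1,b))=0$ and $(\text{right cap }(s,\chk a))\circ(\text{left cup }(k-1,b))=\delta_{k-1,s}\delta_{a,b}$ to get components $0$ and $\delta_{k-1,s}\tr(\chk a f)$. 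On the right-hand side, push the token through $t$ (again no correction terms) and apply $t\circ(\text{left cup }(k-1,1))=0$ for the first component; for the second, merge $f$ into the token $\chk a$ of the decorated right cap via \cref{rel:token-colide-up}, expand everything in the basis using \cref{eq:f-in-basis,eq:f-in-dual-basis}, and simplify with the elementary Frobenius identity $\sum_{b\in B}\tr(hb)\tr(\chk b) = \tr(h)$ to recover $\delta_{k-1,s}\tr(\chk a f)$. The components match, which proves \cref{eq:leftcup-square-dual-dot}. Finally, \cref{eq:leftcap-square-dual-dot} follows by applying the flip isomorphism $\omega$ of \cref{lem:flip-symmetry} to \cref{eq:leftcup-square-dual-dot}: $\omega$ sends $\Heis_{F,k}$ to $\Heis^\op_{F^\op,-k}$ (with $-k<0$), sends a decorated left cup with label $(r,f)$ to $(-1)^{\bar f}$ times the decorated left cap with label $(r,f)$ by \cref{cor:omega-left-cupcap}, and sends $\beta_f$ to $\beta_f'$; the resulting identity in $\Heis_{F^\op,-k}$ is exactly \cref{eq:leftcap-square-dual-dot}, with the parity sign produced by this passage. (Alternatively, one can prove \cref{eq:leftcap-square-dual-dot} directly by the same bookkeeping, using $t'\circ t = \id$ and sliding $f$ around the resulting closed loop.)

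The main obstacle is the sign and twist bookkeeping: the skeleton of each argument is forced by invertibility of the inversion matrix, but correctly absorbing a (possibly odd) token into a decorated cap or cup---tracking the Koszul signs, the Nakayama twists coming from \cref{rel:dot-token-up-slide,eq:Nakayama-def}, and, for \cref{eq:leftcap-square-dual-dot}, verifying that the sign $(-1)^{\bar f}$ is precisely the one $\omega$ produces---is the delicate part, especially in the $k<0$ case.
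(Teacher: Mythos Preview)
Your argument is correct and rests on the same underlying fact as the paper's proof---invertibility of the inversion matrix---but the organization differs in a way worth noting.  For \cref{eq:tokenslide-leftcross1} the paper conjugates the already-known relation \cref{eq:tokenslide-rightcross1} by $t'$ on both sides and then simplifies the resulting $t'\circ t$ and $t\circ t'$ using the idempotent decompositions \cref{rel:up-down-doublecross,rel:down-up-doublecross} together with the curl vanishing \cref{rel:leftcurl-l-positive,rel:leftcurl-l-negative}; your version instead postcomposes both sides of the \emph{desired} relation with the monomorphism $C$ and checks component by component.  Similarly, for \cref{eq:leftcup-square-dual-dot} the paper composes the decomposition \cref{rel:up-down-doublecross} on the bottom with the left cup carrying $f$, invoking the bubble value \cref{rel:cc-l-positive} and the curl vanishing \cref{rel:leftcurl-l-positive} directly, whereas you again test against $C$ and use the bare orthogonality of decorated caps and cups.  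Your framing is more uniform (one testing procedure handles all four statements), at the cost of rederiving on the fly facts the paper has already packaged as \cref{rel:leftcurl-l-positive,rel:cc-l-positive}; the paper's framing is slightly shorter because it can cite those lemmas.  The passage to \cref{eq:tokenslide-leftcross2,eq:leftcap-square-dual-dot} via $\omega$ and \cref{cor:omega-left-cupcap} is identical in both.  One small remark: your detour through $\sum_{b\in B}\tr(hb)\tr(\chk b)=\tr(h)$ is unnecessary if you expand $\chk a f$ in the dual basis rather than expanding the label $1$ of the left cup---the pairing then collapses immediately to $\delta_{k-1,s}\tr(\chk a f)$.
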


\begin{proof}
  To prove \cref{eq:tokenslide-leftcross1}, compose \cref{eq:tokenslide-rightcross1} on the top and bottom with the left crossing $t'$, and then use \cref{rel:up-down-doublecross,rel:down-up-doublecross,rel:leftcurl-l-positive,rel:leftcurl-l-negative}.  To prove \cref{eq:leftcup-square-dual-dot} for $f \in B$, compose \cref{rel:up-down-doublecross} on the bottom with
  \[
    \begin{tikzpicture}[anchorbase]
      \draw[<-] (0,0) -- (0,-0.3) arc (180:360:.3) -- (0.6,0);
      \bluedot{(0,-0.3)} node[anchor=west,color=black] {$f$};
    \end{tikzpicture}
  \]
  and use \cref{rel:leftcurl-l-positive,rel:cc-l-positive}.  The result for general $f \in F$ follows by linearity.  The relations \cref{eq:tokenslide-leftcross2,eq:leftcap-square-dual-dot} then follow from \cref{eq:tokenslide-leftcross1,eq:leftcup-square-dual-dot} by applying the automorphism $\omega$ and using \cref{cor:omega-left-cupcap}.
\end{proof}

\begin{lem}
  The following relations hold for all $f \in F$:

  \noindent\begin{minipage}{0.5\linewidth}
    \begin{equation} \label{rel:f-left-cap}
      \begin{tikzpicture}[anchorbase]
        \draw[<-] (0,0) -- (0,0.3) arc (180:0:.3) -- (0.6,0);
        \bluedot{(0.0,0.3)} node[anchor=east,color=black] {$f$};
      \end{tikzpicture}
      \ = \
      \begin{tikzpicture}[anchorbase]
        \draw[<-] (0,0) -- (0,0.3) arc (180:0:.3) -- (0.6,0);
        \bluedot{(0.6,0.3)} node[anchor=west,color=black] {$\psi^k(f)$};
      \end{tikzpicture}\ ,
    \end{equation}
  \end{minipage}%
  \begin{minipage}{0.5\linewidth}
    \begin{equation} \label{rel:f-left-cup}
      \begin{tikzpicture}[anchorbase]
        \draw[<-] (0,0) -- (0,-0.3) arc (180:360:.3) -- (0.6,0);
        \bluedot{(0.6,-0.3)} node[anchor=west,color=black] {$f$};
      \end{tikzpicture}
      \ =\
      \begin{tikzpicture}[anchorbase]
        \draw[<-] (0,0) -- (0,-0.3) arc (180:360:.3) -- (0.6,0);
        \bluedot{(0.0,-0.3)} node[anchor=east,color=black] {$\psi^k(f)$};
      \end{tikzpicture}\ .
    \end{equation}
  \end{minipage}\par\vspace{\belowdisplayskip}
\end{lem}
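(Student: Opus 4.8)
The plan is to treat \cref{rel:f-left-cap} first; the proof of \cref{rel:f-left-cup} is entirely analogous, using the definition \cref{eq:left-cup-def} of $c'$ and the relation \cref{eq:leftcup-square-dual-dot} in place of \cref{eq:left-cap-def} and \cref{eq:leftcap-square-dual-dot} (alternatively one can try to deduce it from \cref{rel:f-left-cap} for the category $\Heis_{F^\op,-k}$ via the isomorphism $\omega$ of \cref{lem:flip-symmetry} and \cref{cor:omega-left-cupcap}, but one must then chase the Nakayama twist carefully). For \cref{rel:f-left-cap} I would split into the two cases $k \ge 0$ and $k < 0$ according to the two branches of the definition \cref{eq:left-cap-def}.

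Case $k \ge 0$. Here $d'$ is, by definition, the ``curl'' of \cref{eq:left-cap-def}, which is a composite of the ordinary cap $d$, a single crossing, the ordinary cups and caps occurring in the rotations, and $x^k$ ($k$ dots) placed on the strand. I would put the token $\beta'_f$ at the bottom of the left ($\sQ_-$) strand of this diagram and slide it along the (single) strand of $d'$ toward the right ($\sQ_+$) strand. Passing the ordinary cup/cap arcs is free by \cref{eq:f-right-cupcap-slide}, passing the crossing is free by \cref{eq:tokenslide-rightcross1,eq:tokenslide-rightcross2,eq:tokenslide-leftcross1,eq:tokenslide-leftcross2}, and passing each of the $k$ dots changes the label by one application of $\psi$ (via \cref{rel:dot-token-up-slide} on the upward portions, \cref{rel:dot-token-down-slide} on the downward portions). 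The orientation of the strand in \cref{eq:left-cap-def} is arranged precisely so that all $k$ dots are crossed in the same ($\psi$) direction, so the token arrives on the $\sQ_+$ strand with the label $\psi^k(f)$, which is \cref{rel:f-left-cap}. (When $k = 0$ there are no dots and the slide is free, consistent with $\psi^0 = \id$.)

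Case $k < 0$. Now $d'$ is the decorated left cap of \cref{eq:decorated-left-cap-cup} with label $(-k-1,1)$. By \cref{eq:leftcap-square-dual-dot} the left side of \cref{rel:f-left-cap} equals $(-1)^{\bar f}$ times the decorated left cap with label $(-k-1,f)$, so it suffices to show that the right side of \cref{rel:f-left-cap} equals the same morphism. Both sides are morphisms $\sQ_-\sQ_+ \to \one$, and by the inversion relation \cref{eq:inversion-relation-neg-l} the morphism $t \colon \sQ_+\sQ_- \to \sQ_-\sQ_+$ together with the right cups carrying $s$ dots and a token $\chk c$ (for $0 \le s \le -k-1$ and $c \in B$) assemble into an isomorphism onto $\sQ_-\sQ_+$; hence a morphism out of $\sQ_-\sQ_+$ is determined by its composites with $t$ and with these right cups. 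Composing either side with $t$ gives zero: one first slides the token across $t$ by \cref{eq:tokenslide-rightcross1,eq:tokenslide-rightcross2} and then applies the identity (a consequence of \cref{eq:inversion-relation-neg-l}) that the decorated left cap composed with $t$ vanishes. Composing with a right cup carrying $s$ dots and token $\chk c$: on each side I would slide the token down onto the $\sQ_+$ strand of the ordinary cup using \cref{eq:f-right-cupcap-slide}, commute it past the $s$ dots using \cref{rel:dot-token-up-slide} and past $\chk c$ using \cref{rel:token-colide-up} (this is where the power of $\psi$ is generated), and then use the orthogonality relation that the decorated left cap with label $(r,b)$ composed with the right cup carrying $s$ dots and token $\chk c$ equals $\delta_{s,r}\delta_{b,c}$. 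Comparing the resulting scalars, using \cref{eq:f-in-basis,eq:f-in-dual-basis}, \cref{eq:double-dual}, and the Nakayama identity \cref{eq:Nakayama-def} together with the $\psi$-invariance of the trace, then yields the equality; the point is that a token crossing into a label of the form $(-k-1,\,\cdot\,)$ along the $\sQ_+$ strand picks up $\psi^{-1}$ for each of the $-k-1$ dots and one more $\psi^{-1}$ from \cref{eq:double-dual}, i.e.\ a net $\psi^{k}$.

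The step I expect to be the main obstacle is this last one: keeping track of the $\psi$-twists and signs when reducing against the inversion-relation description of morphisms out of $\sQ_-\sQ_+$. One has to be careful about which of the two strands carries the token, the direction in which it is slid past the dots, the sign $(-1)^{\bar f}$ coming from \cref{eq:leftcap-square-dual-dot}, and the Nakayama twist hidden in the trace, in order to verify that all the contributions combine into exactly $\psi^k(f)$ rather than $\psi^{\pm(k\pm 1)}(f)$. The $k \ge 0$ case is comparatively routine once one commits to the explicit form of the curl in \cref{eq:left-cap-def}.
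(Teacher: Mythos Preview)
Your approach is correct. For $k \ge 0$ it is exactly the paper's argument: the paper also slides the token along the single strand of the curl \cref{eq:left-cap-def}, invoking \cref{rel:dot-token-down-slide} (each of the $k$ dots contributes one power of $\psi$) together with \cref{eq:tokenslide-leftcross1,eq:tokenslide-leftcross2}.

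For $k<0$ your argument and the paper's are dual to one another. You propose to verify equality of the two morphisms $\sQ_-\sQ_+\to\one$ by precomposing with the generators of the isomorphism \cref{eq:inversion-relation-neg-l}; the paper instead postcomposes the right-hand side with the idempotent decomposition \cref{rel:down-up-doublecross} of $\id_{\sQ_-\sQ_+}$ (which is the same inversion relation, read the other way). In the paper's version the double-crossing term dies by \cref{rel:leftcurl-l-negative}, and in the surviving sum only $r=-k-1$ contributes by \cref{rel:ccc-l-negative}; the scalar $\tr(\psi^{-1}(f)\chk b)$ that appears is converted to $(-1)^{\bar f}\tr(\chk b f)$ via \cref{eq:Nakayama-def}, and \cref{eq:leftcap-square-dual-dot} finishes. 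This is a bit cleaner than your plan because it manipulates only one side and the Nakayama twist enters in a single explicit step, whereas in your approach you must track the height-swap sign $(-1)^{\bar f\bar c}$ together with the $\psi$'s from the $-k-1$ dots and then invoke \cref{eq:Nakayama-def}; but both routes lead to the same identity and your caution about sign-tracking is exactly the right place to be careful.

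For \cref{rel:f-left-cup}, the paper takes the route you mention parenthetically: apply $\omega$ via \cref{lem:flip-symmetry,cor:omega-left-cupcap}, using that the Nakayama automorphism of $F^\op$ is $\psi^{-1}$. Your direct argument using \cref{eq:left-cup-def} and \cref{eq:leftcup-square-dual-dot} would of course also work.
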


\begin{proof}
  It suffices to prove \cref{rel:f-left-cap}, since \cref{rel:f-left-cup} then follows from \cref{lem:flip-symmetry,cor:omega-left-cupcap}, together with the fact that the Nakayama automorphism of $F^\op$ is $\psi^{-1}$.

  If $k \ge 0$, \cref{rel:f-left-cap} follows immediately from the definition \cref{eq:left-cap-def} of the left cap $d'$, \cref{rel:dot-token-down-slide,eq:tokenslide-leftcross1,eq:tokenslide-leftcross2}.  When $k < 0$, we have
  \begin{multline*}
    \begin{tikzpicture}[anchorbase]
      \draw[<-] (0,0) -- (0,0.3) arc (180:0:.3) -- (0.6,0);
      \bluedot{(0.6,0.3)} node[anchor=west,color=black] {$\psi^k(f)$};
    \end{tikzpicture}
    \stackrel{\cref{rel:down-up-doublecross}}{=}
    \begin{tikzpicture}[anchorbase]
      \draw[<-] (0,0) .. controls (0,0.3) and (0.5,0.2) .. (0.5,0.5) .. controls (0.5,0.8) and (0,0.7) .. (0,1);
      \draw (0.5,0) .. controls (0.5,0.3) and (0,0.2) .. (0,0.5) .. controls (0,0.8) and (0.5,0.7) .. (0.5,1) arc(0:180:0.25);
      \bluedot{(0.5,1)} node[anchor=west,color=black] {$\psi^k(f)$};
    \end{tikzpicture}
    \ + \sum_{r=0}^{-k-1}
    \begin{tikzpicture}[anchorbase]
      \draw[->] (0,1.3) arc (180:360:0.3) -- (0.6,2) arc(0:180:0.3) -- (0,1.3);
      \draw[<-] (0,0) -- (0,0.2) arc (180:0:0.3) -- (0.6,0);
      \bluedot{(0.6,2)} node[anchor=west,color=black] {$\psi^k(f)$};
      \redcircle{(0.6,1.7)} node[anchor=east,color=black] {$r$};
      \bluedot{(0.6,1.4)} node[anchor=west,color=black] {$\chk{b}$};
      \greensquare{(0.3,0.5)} node[anchor=west,color=black] {\squarelabel{(r,b)}};
    \end{tikzpicture}
    \ \stackrel[\substack{\cref{rel:dot-token-up-slide} \\ \cref{eq:f-right-cupcap-slide}}]{\substack{\cref{rel:leftcurl-l-negative} \\ \cref{rel:ccc-l-negative}}}{=}
    \tr \left( \psi^{-1}(f) \chk{b} \right)\
    \begin{tikzpicture}[anchorbase]
      \draw[<-] (0,0) -- (0,0.2) arc (180:0:0.3) -- (0.6,0);
      \greensquare{(0.3,0.5)} node[anchor=west,color=black] {\squarelabel{(-k-1,b)}};
    \end{tikzpicture}
    \\
    \stackrel{\cref{eq:Nakayama-def}}{=} (-1)^{\bar f}\
    \begin{tikzpicture}[anchorbase]
      \draw[<-] (0,0) -- (0,0.2) arc (180:0:0.3) -- (0.6,0);
      \greensquare{(0.3,0.5)} node[anchor=west,color=black] {\squarelabel{(-k-1,f)}};
    \end{tikzpicture}
    \stackrel{\cref{eq:leftcap-square-dual-dot}}{=}
    \begin{tikzpicture}[anchorbase]
      \draw[<-] (0,0) -- (0,0.3) arc (180:0:.3) -- (0.6,0);
      \bluedot{(0.0,0.3)} node[anchor=east,color=black] {$f$};
    \end{tikzpicture}
    \ . \qedhere
  \end{multline*}
\end{proof}

\begin{lem} \label{lem:dotslide-left}
  The following relations hold:

  \noindent\begin{minipage}{0.5\linewidth}
    \begin{equation} \label{eq:dotslide-left1}
      \begin{tikzpicture}[anchorbase]
        \draw[<-] (0,0) -- (1,1);
        \draw[->] (1,0) -- (0,1);
        \redcircle{(0.25,0.25)};
      \end{tikzpicture}
      \ -\
      \begin{tikzpicture}[anchorbase]
        \draw[<-] (0,0) -- (1,1);
        \draw[->] (1,0) -- (0,1);
        \redcircle{(0.75,0.75)};
      \end{tikzpicture}
      \ =\
      \begin{tikzpicture}[anchorbase]
        \draw[<-] (0,1.1) -- (0,0.9) arc(180:360:0.3) -- (0.6,1.1);
        \draw[<-] (0,-0.1) -- (0,0.1) arc(180:0:0.3) -- (0.6,-0.1);
        \bluedot{(0.6,0.1)} node[anchor=west,color=black] {$\chk{b}$};
        \bluedot{(0,0.9)} node[anchor=west,color=black] {$b$};
      \end{tikzpicture}\ ,
    \end{equation}
  \end{minipage}%
  \begin{minipage}{0.5\linewidth}
    \begin{equation} \label{eq:dotslide-left2}
      \begin{tikzpicture}[anchorbase]
        \draw[<-] (0,0) -- (1,1);
        \draw[->] (1,0) -- (0,1);
        \redcircle{(0.75,0.25)};
      \end{tikzpicture}
      \ -\
      \begin{tikzpicture}[anchorbase]
        \draw[<-] (0,0) -- (1,1);
        \draw[->] (1,0) -- (0,1);
        \redcircle{(0.25,0.75)};
      \end{tikzpicture}
      \ = (-1)^{\bar b}
      \begin{tikzpicture}[anchorbase]
        \draw[<-] (0,1.1) -- (0,0.9) arc(180:360:0.3) -- (0.6,1.1);
        \draw[<-] (0,-0.1) -- (0,0.1) arc(180:0:0.3) -- (0.6,-0.1);
        \bluedot{(0,0.1)} node[anchor=east,color=black] {$b$};
        \bluedot{(0.6,0.9)} node[anchor=west,color=black] {$\chk{b}$};
      \end{tikzpicture}\ .
    \end{equation}
  \end{minipage}\par\vspace{\belowdisplayskip}
\end{lem}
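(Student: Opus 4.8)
The plan is to obtain \cref{eq:dotslide-left1} and \cref{eq:dotslide-left2} by transporting the already established dot-slide relations for the right crossing $t$ to the left crossing $t'$, in exactly the way the token-slide relations \cref{eq:tokenslide-leftcross1,eq:tokenslide-leftcross2} were obtained. Since $t'$ exists only by virtue of the inversion relation, there is no purely "rotational" derivation from $s$; instead one exploits the fact that $t$ and $t'$ are almost mutually inverse. Concretely, I would take \cref{rel:dotslide-right1} (an identity of morphisms $\sQ_+\sQ_- \to \sQ_-\sQ_+$), post-compose it with $t' \colon \sQ_-\sQ_+ \to \sQ_+\sQ_-$ and pre-compose it with $t' \colon \sQ_-\sQ_+ \to \sQ_+\sQ_-$. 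On the left-hand side this produces composites of the form $t'\circ(t\text{ with a dot})\circ t'$; applying \cref{rel:up-down-doublecross} to each occurrence of $t'\circ t$ and \cref{rel:down-up-doublecross} to each occurrence of $t\circ t'$ replaces these double crossings by an identity together with a sum of green-square correction terms. Because the dot then sits on a strand entering such a correction term, one slides it onto the resulting left curl using \cref{rel:dot-right-cupcap} and \cref{eq:f-right-cupcap-slide}, and kills the whole term via the vanishing-curl relations \cref{rel:leftcurl-l-positive,rel:leftcurl-l-negative} (an undecorated left curl is $0$). What survives on the left is precisely the difference of $t'$ with a dot below the crossing and $t'$ with a dot above it, i.e.\ the left-hand side of \cref{eq:dotslide-left1}.

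For the right-hand side, the token cup-cap term of \cref{rel:dotslide-right1} gets sandwiched between the two copies of $t'$. Using the pitchfork relations \cref{rel:pitchforks-right}, the right adjunction relations, and the defining formulas \cref{eq:left-cup-def,eq:left-cap-def} for $c'$ and $d'$ (as well as the curl relations again to discard any spurious curls), one slides the $t'$'s past the cups and caps until only a right cup-cap decorated by the tokens $b$ and $\chk{b}$ remains, which is exactly the right-hand side of \cref{eq:dotslide-left1}. A useful consistency check here is that \cref{eq:dotslide-left1,eq:dotslide-left2} carry no dependence on $k$ and no bubbles, so all the green-square corrections produced in the previous paragraph (which are present only when $k \neq 0$) must indeed cancel; this is forced by the curl relations used there.

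Finally, \cref{eq:dotslide-left2} will follow from \cref{eq:dotslide-left1} by applying the isomorphism $\omega$ of \cref{lem:flip-symmetry} and invoking \cref{cor:omega-left-cupcap} to identify the images of the left cups and caps, precisely as \cref{eq:tokenslide-leftcross2} was deduced from \cref{eq:tokenslide-leftcross1}; here one uses that $\omega$ reflects a diagram in the horizontal axis — so it interchanges a dot below a crossing with a dot above it — and multiplies by $(-1)^{r+r'}$, where $r$ counts crossings and $r'$ counts undecorated left cups and caps, which is what accounts for the relative factor $(-1)^{\bar b}$ between the two relations (mirroring the asymmetry between \cref{rel:dotslide-right1} and \cref{rel:dotslide-right2}). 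I expect the main obstacle to be not any single hard step but the disciplined bookkeeping: tracking the parity sign $(-1)^{\bar b}$ of \cref{rel:dotslide-right1} against the signs incurred when interchanging heights of dots, tokens and crossings, and organizing the green-square correction terms so that they manifestly telescope away. Once that is set up, each individual simplification is routine given the relations already proved in this section.
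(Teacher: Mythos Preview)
Your overall strategy is the paper's: sandwich \cref{rel:dotslide-right1} between two copies of $t'$ and simplify using \cref{rel:up-down-doublecross,rel:down-up-doublecross} together with the curl relations. Two points of divergence are worth flagging. First, a harmless swap: composing \cref{rel:dotslide-right1} with $t'$ on both sides actually yields \cref{eq:dotslide-left2}, not \cref{eq:dotslide-left1}; the paper therefore proves \cref{eq:dotslide-left2} directly and deduces \cref{eq:dotslide-left1} via $\omega$, the reverse of what you propose.

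Second, and more substantively, your account of which terms survive is inverted. The right-hand side of \cref{rel:dotslide-right1}, once sandwiched between two $t'$'s, becomes a pair of left curls carrying tokens; by \cref{rel:leftcurl-l-positive,rel:leftcurl-l-negative} (together with \cref{eq:left-cup-def,eq:left-cap-def}) this contributes only $\delta_{k,0}$ times the desired cup-cap term, hence vanishes when $k\neq 0$. The green-square correction terms coming from \cref{rel:up-down-doublecross} or \cref{rel:down-up-doublecross} do \emph{not} all telescope away: after sliding the extra dot onto the curl, the terms with $0\le r\le |k|-2$ die by the curl relations, but the single surviving term $r=|k|-1$ is rewritten via \cref{eq:leftcup-square-dual-dot} or \cref{eq:leftcap-square-dual-dot} (and \cref{eq:left-cup-def,eq:left-cap-def}) as precisely the token cup-cap on the right of \cref{eq:dotslide-left2}. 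So for $k\neq 0$ the answer comes entirely from the correction terms, not from the sandwiched right-hand side. Once you reorganize the argument with this in mind (and treat the cases $k>0$, $k<0$, $k=0$ separately), the bookkeeping you anticipate is exactly what the paper carries out.
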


\begin{proof}
  It suffices to prove \cref{eq:dotslide-left2}, since \cref{eq:dotslide-left1} then follows from \cref{lem:flip-symmetry,cor:omega-left-cupcap}.

  Composing \cref{rel:dotslide-right1} on the top and bottom with $t'$ we have
  \begin{gather*}
    \begin{tikzpicture}[anchorbase]
      \draw[<-] (0,0) .. controls (0,0.3) and (0.6,0.3) .. (0.6,0.6) .. controls (0.6,0.9) and (0,0.9) .. (0,1.2) .. controls (0,1.5) and (0.6,1.5) .. (0.6,1.8);
      \draw[->] (0.6,0) .. controls (0.6,0.3) and (0,0.3) .. (0,0.6) .. controls (0,0.9) and (0.6,0.9) .. (0.6,1.2) .. controls (0.6,1.5) and (0,1.5) .. (0,1.8);
      \redcircle{(0.6,1.2)};
    \end{tikzpicture}
    \ -\
    \begin{tikzpicture}[anchorbase]
      \draw[<-] (0,0) .. controls (0,0.3) and (0.6,0.3) .. (0.6,0.6) .. controls (0.6,0.9) and (0,0.9) .. (0,1.2) .. controls (0,1.5) and (0.6,1.5) .. (0.6,1.8);
      \draw[->] (0.6,0) .. controls (0.6,0.3) and (0,0.3) .. (0,0.6) .. controls (0,0.9) and (0.6,0.9) .. (0.6,1.2) .. controls (0.6,1.5) and (0,1.5) .. (0,1.8);
      \redcircle{(0,0.6)};
    \end{tikzpicture}
    = (-1)^{\bar b}\
    \begin{tikzpicture}[anchorbase]
      \draw[<-] (0,0) .. controls (0.25,0.25) and (0.5,0.25) .. (0.5,0.5) arc(0:180:0.25) .. controls (0,0.25) and (0.25,0.25) .. (0.5,0);
      \draw[<-] (0,1.8) .. controls (0.25,1.55) and (0.5,1.55) .. (0.5,1.3) arc(360:180:0.25) .. controls (0,1.55) and (0.25,1.55) .. (0.5,1.8);
      \bluedot{(0.5,1.3)} node[anchor=west,color=black] {$\chk{b}$};
      \bluedot{(0,0.5)} node[anchor=east,color=black] {$b$};
    \end{tikzpicture}
    \\
    \stackrel[\substack{\cref{eq:left-cup-def}, \cref{eq:left-cap-def} \\ \cref{rel:dot-right-cupcap}, \cref{eq:tokenslide-leftcross1}}]{\substack{\cref{rel:up-down-doublecross}, \cref{rel:down-up-doublecross} \\ \cref{rel:leftcurl-l-positive}, \cref{rel:leftcurl-l-negative}}}{\implies}
    \begin{tikzpicture}[anchorbase]
      \draw[<-] (0,0) -- (1,1);
      \draw[->] (1,0) -- (0,1);
      \redcircle{(0.75,0.25)};
    \end{tikzpicture}
    \ - \sum_{r=0}^{-k-1}\
    \begin{tikzpicture}[anchorbase]
      \draw[<-] (0,0) -- (0,0.2) arc (180:0:0.3) -- (0.6,0);
      \draw[<-] (0,1.7) .. controls (0.3,1.4) and (0.6,1.4) .. (0.6,1.1) arc(360:180:0.3) .. controls (0,1.4) and (0.3,1.4) .. (0.6,1.7);
      \greensquare{(0.3,0.5)} node[anchor=west,color=black] {\squarelabel{(r,b)}};
      \bluedot{(0.57,0.95)} node[anchor=west,color=black] {$\chk{b}$};
      \redcircle{(0.57,1.25)} node[anchor=west,color=black] {$r+1$};
    \end{tikzpicture}
    \ -\
    \begin{tikzpicture}[anchorbase]
      \draw[<-] (0,0) -- (1,1);
      \draw[->] (1,0) -- (0,1);
      \redcircle{(0.25,0.75)};
    \end{tikzpicture}
    \ + \sum_{r=0}^{k-1}
    \begin{tikzpicture}[anchorbase]
      \draw[<-] (0,1.7) -- (0,1.5) arc (180:360:0.3) -- (0.6,1.7);
      \draw[<-] (0,0) .. controls (0.3,0.3) and (0.6,0.3) .. (0.6,0.6) arc(0:180:0.3) .. controls (0,0.3) and (0.3,0.3) .. (0.6,0);
      \greensquare{(0.3,1.2)} node[anchor=west,color=black] {\squarelabel{(r,b)}};
      \bluedot{(0.03,0.75)} node[anchor=east,color=black] {$\chk{b}$};
      \redcircle{(0.03,0.45)};
      \redcircle{(0.6,0.6)} node[anchor=west,color=black] {$r$};
    \end{tikzpicture}
    \ = \delta_{k,0} (-1)^{\bar b}
    \begin{tikzpicture}[anchorbase]
      \draw[<-] (0,1.1) -- (0,0.9) arc(180:360:0.3) -- (0.6,1.1);
      \draw[<-] (0,-0.1) -- (0,0.1) arc(180:0:0.3) -- (0.6,-0.1);
      \bluedot{(0.6,0.1)} node[anchor=west,color=black] {$b$};
      \bluedot{(0,0.9)} node[anchor=west,color=black] {$\chk{b}$};
    \end{tikzpicture}\ .
  \end{gather*}
  Now, when $k < 0$, we have
  \[
    \sum_{r=0}^{-k-1}\
    \begin{tikzpicture}[anchorbase]
      \draw[<-] (0,0) -- (0,0.2) arc (180:0:0.3) -- (0.6,0);
      \draw[<-] (0,1.7) .. controls (0.3,1.4) and (0.6,1.4) .. (0.6,1.1) arc(360:180:0.3) .. controls (0,1.4) and (0.3,1.4) .. (0.6,1.7);
      \greensquare{(0.3,0.5)} node[anchor=west,color=black] {\squarelabel{(r,b)}};
      \bluedot{(0.57,0.95)} node[anchor=west,color=black] {$\chk{b}$};
      \redcircle{(0.57,1.25)} node[anchor=west,color=black] {$r+1$};
    \end{tikzpicture}
    \ \stackrel{\cref{rel:leftcurl-l-negative}}{=}\
    \begin{tikzpicture}[anchorbase]
      \draw[<-] (0,0) -- (0,0.2) arc (180:0:0.3) -- (0.6,0);
      \draw[<-] (0,1.7) .. controls (0.3,1.4) and (0.6,1.4) .. (0.6,1.1) arc(360:180:0.3) .. controls (0,1.4) and (0.3,1.4) .. (0.6,1.7);
      \greensquare{(0.3,0.5)} node[anchor=west,color=black] {\squarelabel{(-k-1,b)}};
      \bluedot{(0.57,0.95)} node[anchor=west,color=black] {$\chk{b}$};
      \redcircle{(0.57,1.25)} node[anchor=west,color=black] {$-k$};
    \end{tikzpicture}
    \ \stackrel[\substack{\cref{eq:left-cup-def} \\ \cref{eq:leftcap-square-dual-dot}}]{\substack{\cref{eq:f-right-cupcap-slide} \\ \cref{eq:tokenslide-leftcross2}}}{=} (-1)^{\bar b}\
    \begin{tikzpicture}[anchorbase]
      \draw[<-] (0,1.1) -- (0,0.9) arc(180:360:0.3) -- (0.6,1.1);
      \draw[<-] (0,-0.1) -- (0,0.1) arc(180:0:0.3) -- (0.6,-0.1);
      \bluedot{(0,0.1)} node[anchor=west,color=black] {$b$};
      \bluedot{(0.6,0.9)} node[anchor=west,color=black] {$\chk{b}$};
    \end{tikzpicture}\ .
  \]
  Similarly, when $k > 0$, we have
  \begin{multline*}
    \sum_{r=0}^{k-1}
    \begin{tikzpicture}[anchorbase]
      \draw[<-] (0,1.7) -- (0,1.5) arc (180:360:0.3) -- (0.6,1.7);
      \draw[<-] (0,0) .. controls (0.3,0.3) and (0.6,0.3) .. (0.6,0.6) arc(0:180:0.3) .. controls (0,0.3) and (0.3,0.3) .. (0.6,0);
      \greensquare{(0.3,1.2)} node[anchor=west,color=black] {\squarelabel{(r,b)}};
      \bluedot{(0.03,0.75)} node[anchor=east,color=black] {$\chk{b}$};
      \redcircle{(0.03,0.45)};
      \redcircle{(0.6,0.6)} node[anchor=west,color=black] {$r$};
    \end{tikzpicture}
    \ \stackrel[\cref{rel:dot-right-cupcap}]{\substack{\cref{rel:dot-token-up-slide} \\ \cref{eq:tokenslide-leftcross1}}}{=} \sum_{r=0}^{k-1}
    \begin{tikzpicture}[anchorbase]
      \draw[<-] (0,1.7) -- (0,1.5) arc (180:360:0.3) -- (0.6,1.7);
      \draw[<-] (0,0) .. controls (0.3,0.3) and (0.6,0.3) .. (0.6,0.6) arc(0:180:0.3) .. controls (0,0.3) and (0.3,0.3) .. (0.6,0);
      \greensquare{(0.3,1.2)} node[anchor=west,color=black] {\squarelabel{(r,b)}};
      \bluedot{(0.45,0.14)} node[anchor=west,color=black] {$\psi(\chk{b})$};
      \redcircle{(0,0.6)} node[anchor=east,color=black] {\dotlabel{r+1}};
    \end{tikzpicture}
    \ \stackrel{\cref{rel:leftcurl-l-positive}}{=}
    \begin{tikzpicture}[anchorbase]
      \draw[<-] (0,1.7) -- (0,1.5) arc (180:360:0.3) -- (0.6,1.7);
      \draw[<-] (0,0) .. controls (0.3,0.3) and (0.6,0.3) .. (0.6,0.6) arc(0:180:0.3) .. controls (0,0.3) and (0.3,0.3) .. (0.6,0);
      \greensquare{(0.3,1.2)} node[anchor=west,color=black] {\squarelabel{(k-1,b)}};
      \bluedot{(0.45,0.14)} node[anchor=west,color=black] {$\psi(\chk{b})$};
      \redcircle{(0,0.6)} node[anchor=east,color=black] {$k$};
    \end{tikzpicture}
    \\
    \stackrel[\cref{eq:leftcup-square-dual-dot}]{\cref{eq:left-cap-def}}{=} -
    \begin{tikzpicture}[anchorbase]
      \draw[<-] (0,1.1) -- (0,0.9) arc(180:360:0.3) -- (0.6,1.1);
      \draw[<-] (0,-0.1) -- (0,0.1) arc(180:0:0.3) -- (0.6,-0.1);
      \bluedot{(0.6,0.1)} node[anchor=west,color=black] {$\psi(\chk{b})$};
      \bluedot{(0,0.9)} node[anchor=west,color=black] {$b$};
    \end{tikzpicture}
    \ = - (-1)^{\bar b}\
    \begin{tikzpicture}[anchorbase]
      \draw[<-] (0,1.1) -- (0,0.9) arc(180:360:0.3) -- (0.6,1.1);
      \draw[<-] (0,-0.1) -- (0,0.1) arc(180:0:0.3) -- (0.6,-0.1);
      \bluedot{(0.6,0.1)} node[anchor=west,color=black] {$b$};
      \bluedot{(0,0.9)} node[anchor=west,color=black] {$\chk{b}$};
    \end{tikzpicture}
    \ ,
  \end{multline*}
  where in the last equality we used the fact that $\{(-1)^{\bar b} b : b \in B\}$ is the basis left dual to the basis $\{\psi(\chk{b}) : b \in B\}$.
  \details{
    For $a,b \in B$, we have
    \[
      \delta_{a,b}
      = \tr(\chk{b}a)
      = (-1)^{\bar a} \left( a \psi(\chk{b}) \right)
    \]
  }
  Relation \cref{eq:dotslide-left2} follows.
\end{proof}

\subsection{Proofs of relations}

We define the following \emph{negatively dotted bubbles} for $r < 0$ and $f \in F$:

\begin{equation} \label{eq:neg-clockwise-bubble}
  \begin{tikzpicture}[anchorbase]
    \draw[<-] (0,0.3) arc(90:450:0.3);
    \redcircle{(-0.3,0)} node[anchor=east,color=black] {$r$};
    \bluedot{(0.3,0)} node[anchor=west,color=black] {$f$};
  \end{tikzpicture}
  =
  \begin{cases}
    (-1)^{\bar f}\
    \begin{tikzpicture}[anchorbase]
      \draw[->] (0,-0.3) arc(-90:270:0.3);
      \redcircle{(0.3,0)} node[anchor=west,color=black] {$-k$};
      \greensquare{(0,0.3)} node[anchor=south,color=black] {\squarelabel{(-r-1,f)}};
    \end{tikzpicture}
    & \text{if } r > k-1, \\
    - \tr(f) & \text{if } r = k-1, \\
    0 & \text{if } r < k-1.
  \end{cases}
\end{equation}

\begin{equation} \label{eq:neg-cc-bubble}
  \begin{tikzpicture}[anchorbase]
    \draw[->] (0,0.3) arc(90:450:0.3);
    \redcircle{(0.3,0)} node[anchor=west,color=black] {$r$};
    \bluedot{(-0.3,0)} node[anchor=east,color=black] {$f$};
  \end{tikzpicture}
  =
  \begin{cases}
    -\
    \begin{tikzpicture}[anchorbase]
      \draw[<-] (0,0.3) arc(90:450:0.3);
      \redcircle{(0.3,0)} node[anchor=west,color=black] {$k$};
      \greensquare{(0,-0.3)} node[anchor=north,color=black] {\squarelabel{(-r-1,\psi^{-r}(f))}};
    \end{tikzpicture}
    & \text{if } r > -k-1, \\
    \tr(f) & \text{if } r = -k-1, \\
    0 & \text{if } r < -k-1.
  \end{cases}
\end{equation}
Note that these definitions are compatible with the action of $\omega$.

\begin{prop} \label{prop:inf-grass}
  The infinite grassmannian relations \cref{eq:inf-grass1,eq:inf-grass2,eq:inf-grass3} hold.
\end{prop}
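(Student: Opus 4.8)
The plan is to dispose of \cref{eq:inf-grass1,eq:inf-grass2} first, since these follow almost immediately from results already in hand, and then to concentrate on \cref{eq:inf-grass3}. For \cref{eq:inf-grass1}: if $0\le r\le k-1$ (which forces $k>0$) the identity is precisely \cref{rel:cc-l-positive}; if instead $r<0$, then $r\le k-1$ still holds and the value of the clockwise bubble is prescribed by its definition \cref{eq:neg-clockwise-bubble}, where under the hypothesis $r\le k-1$ only the cases ``$r=k-1$'' (contributing $-\tr(f)$) and ``$r<k-1$'' (contributing $0$) occur, i.e.\ exactly $-\delta_{r,k-1}\tr(f)$; and if $r\ge 0$ and $k\le 0$ the hypothesis is vacuous. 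The argument for \cref{eq:inf-grass2} is the mirror image, using \cref{rel:ccc-l-negative} and \cref{eq:neg-cc-bubble}, or alternatively one applies the isomorphism $\omega$ of \cref{lem:flip-symmetry}, which interchanges clockwise and counterclockwise bubbles and sends $k$ to $-k$.

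Next, for \cref{eq:inf-grass3}, I would first show that the two displayed sums are equal. In the first sum $r$ and $s$ range over all of $\Z$; by \cref{eq:inf-grass1} a summand vanishes as soon as its clockwise bubble has dot parameter strictly below $k-1$, and by \cref{eq:inf-grass2} it vanishes as soon as its counterclockwise bubble has dot parameter strictly below $-k-1$. The surviving summands are precisely those with clockwise parameter $\ge k-1$ and counterclockwise parameter $\ge -k-1$, and the reindexing $r\mapsto r+k-1$, $s\mapsto s-k-1$ identifies these with the index set $\{\,r,s\ge 0,\ r+s=t\,\}$ of the second sum (in particular the summand whose clockwise parameter equals $k-1$ is retained without being evaluated). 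Hence the two sums coincide, and it remains to evaluate their common value.

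To do this I would exploit \cref{lem:flip-symmetry}: since $\omega$ exchanges the two sums of \cref{eq:inf-grass3}, up to passing to $F^{\op}$ and replacing $k$ by $-k$, and is compatible with the negative-bubble conventions \cref{eq:neg-clockwise-bubble,eq:neg-cc-bubble}, it suffices to treat $k\le 0$, and the case $k=0$ is handled separately (there $t$ is invertible). Assuming $k<0$, the plan is to close up the identity \cref{rel:down-up-doublecross} into a relation between bubbles: composing it with a suitable dotted cup and cap turns the left side into a single dotted bubble, while the right side becomes a ``twisted double crossing'' bubble plus a sum over $0\le r\le -k-1$ and $b\in B$ of a product of a bubble decorated by the green box $(r,b)$ and an ordinary dotted bubble. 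One then rewrites the green-box bubbles as negatively dotted bubbles via \cref{eq:neg-clockwise-bubble,eq:neg-cc-bubble}, reorganises the sum over $b$ into the shape of the left-hand sum of \cref{eq:inf-grass3} using the teleport identity \cref{eq:f-Euler-commute} together with \cref{eq:f-in-basis,eq:f-in-dual-basis}, and finally applies the circle-evaluation relations coming from the inversion relation (the ``diagonal'' half of the statement that the matrices in \cref{eq:inversion-leftcap-def} are mutually inverse), together with \cref{rel:ccc-l-negative}, to pin the resulting generating-function identity to $-\delta_{t,0}\tr(fg)$.

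The hard part will be the bookkeeping in this last step: tracking the powers of the Nakayama automorphism $\psi$ produced when dots and tokens slide past cups, caps, and crossings (via \cref{rel:dot-token-up-slide,rel:f-left-cap,rel:f-left-cup,eq:dotslide-left1,eq:dotslide-left2}), the Koszul signs from odd elements changing height, and the repeated collapse of sums over $B$ using \cref{eq:f-Euler-commute,eq:f-in-basis,eq:f-in-dual-basis}. Isolating the $\delta_{t,0}$ contribution — equivalently, verifying that all ``free'' bubbles cancel for $t\neq 0$ — is the part that genuinely uses the inversion relation rather than only the adjunctions and the affine wreath product relations.
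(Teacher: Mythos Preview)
Your outline is correct and follows the same strategy as the paper's proof, with one cosmetic difference: you reduce via $\omega$ to the case $k\le 0$ and propose to treat $k<0$ directly using \cref{rel:down-up-doublecross}, whereas the paper treats $k\ge 0$ directly using \cref{rel:up-down-doublecross} and invokes $\omega$ for $k<0$. These are mirror images of one another, and the bookkeeping you anticipate (Nakayama powers, Koszul signs, teleporting via \cref{eq:f-Euler-commute}) is exactly what the paper carries out on its side.

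Two small points of comparison. First, your treatment of $k=0$ (``there $t$ is invertible'') is a bit elliptical: the invertibility of the crossing alone does not immediately yield the bubble identity, and one still has to run the computation; it goes through because the sum in \cref{rel:down-up-doublecross} is empty when $k=0$. The paper simply absorbs $k=0$ into the $k\ge 0$ argument without singling it out. Second, the paper does not literally ``close up'' \cref{rel:up-down-doublecross}. Instead it starts from the left-hand sum in \cref{eq:inf-grass3}, expands the negatively dotted bubbles via \cref{eq:neg-cc-bubble} to introduce the green-box decorations, and then \emph{recognises} the pattern of \cref{rel:up-down-doublecross} inside the resulting expression in order to collapse it. Your description enters from the other end (start with \cref{rel:down-up-doublecross} and close it into bubbles), but the algebra one lands in is the same. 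The paper also reduces to $g=1$ at the outset using \cref{eq:f-Euler-commute}, which shortens the calculation; you may find this simplification useful when you write out the details.
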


\begin{proof}
  The relations \cref{eq:inf-grass1,eq:inf-grass2} and the first equality in \cref{eq:inf-grass3} follow immediately from \cref{rel:cc-l-positive,rel:ccc-l-negative,eq:neg-clockwise-bubble,eq:neg-cc-bubble}.

  It remains to prove the second equality in \cref{eq:inf-grass3}.   Using \cref{eq:f-Euler-commute}, it suffices to prove it in the case where $g=1$.  First consider the case $k \ge 0$.  When $t=0$, the middle term of \cref{eq:inf-grass3} becomes, using \cref{eq:inf-grass1,eq:inf-grass2}
  \[
    - \tr(fb) \tr(\chk{b})
    = - \tr \left( \tr(fb) \chk{b} \right)
    \stackrel{\cref{eq:f-in-dual-basis}}{=} - \tr(f).
  \]

  When $t>0$, we have
  \details{
    In the second line below (the expression after the second equality):
    \begin{itemize}
      \item the first sum corresponds to the terms from the first line with $s=-k-1$,
      \item the second (double) sum corresponds to the terms from the first line with $-k \le s \le -1$,
      \item the third (double) sum corrsponds to the terms from the first line with $s \ge 0$.
    \end{itemize}
    For the second (double) sum, we also changed to sum over the basis $\{\psi^{-u}(\chk{b}) : b \in B\}$, with left dual basis $\{(-1)^{\bar b} \psi^{-u-1}(b) : b \in B\}$.  So for the $-k \le s \le -1$ terms from the first line, we have
    \[
      \sum_{u=0}^{k-1} (-1)^{\bar b}\
      \begin{tikzpicture}[anchorbase]
        \draw[<-] (0,-0.5) arc(90:450:0.3);
        \draw[->] (0,0.3) arc(90:450:0.3);
        \redcircle{(-0.3,-0.8)} node[anchor=east,color=black] {\dotlabel{u+t-1}};
        \bluedot{(0.26,-0.65)} node[anchor=west,color=black] {$b$};
        \bluedot{(0.26,-0.95)} node[anchor=west,color=black] {$f$};
        \redcircle{(0.3,0)} node[anchor=west,color=black] {\dotlabel{-u-1}};
        \bluedot{(-0.3,0)} node[anchor=east,color=black] {$\chk{b}$};
      \end{tikzpicture}
      =
      \sum_{u=0}^{k-1}\
      \begin{tikzpicture}[anchorbase]
        \draw[<-] (0,-0.5) arc(90:450:0.3);
        \draw[->] (0,0.3) arc(90:450:0.3);
        \redcircle{(-0.3,-0.8)} node[anchor=east,color=black] {\dotlabel{u+t-1}};
        \bluedot{(0.26,-0.65)} node[anchor=west,color=black] {\dotlabel{\psi^{-u}(\chk{b})}};
        \bluedot{(0.26,-0.95)} node[anchor=west,color=black] {\dotlabel{f}};
        \redcircle{(0.3,0)} node[anchor=west,color=black] {\dotlabel{-u-1}};
        \bluedot{(-0.3,0)} node[anchor=east,color=black] {$\psi^{-u-1}(b)$};
      \end{tikzpicture}
      \stackrel{\mathclap{\cref{eq:neg-cc-bubble}}}{=} - \sum_{u=0}^{k-1}\
      \begin{tikzpicture}[anchorbase]
        \draw[<-] (0,0.5) arc(90:450:0.3);
        \draw[<-] (0,-0.5) arc(90:450:0.3);
        \redcircle{(-0.3,-0.8)} node[anchor=east,color=black] {\dotlabel{u+t-1}};
        \bluedot{(0.26,-0.65)} node[anchor=west,color=black] {\dotlabel{\psi^{-u}(\chk{b})}};
        \bluedot{(0.26,-0.95)} node[anchor=west,color=black] {\dotlabel{f}};
        \redcircle{(0.3,0.2)} node[anchor=west,color=black] {$k$};
        \greensquare{(0,-0.1)} node[anchor=north east,color=black] {\squarelabel{(u,b)}};
      \end{tikzpicture}
    \]
  }
  \begin{align*}
    \sum_{\substack{r,s \in \Z \\ r + s = t-2}} &\
    \begin{tikzpicture}[anchorbase]
      \draw[<-] (0,0.3) arc(90:450:0.3);
      \draw[->] (0,-0.5) arc(90:450:0.3);
      \redcircle{(-0.3,0)} node[anchor=east,color=black] {$r$};
      \bluedot{(0.3,0)} node[anchor=west,color=black] {$fb$};
      \redcircle{(0.3,-0.8)} node[anchor=west,color=black] {$s$};
      \bluedot{(-0.3,-0.8)} node[anchor=east,color=black] {$\chk{b}$};
    \end{tikzpicture}
    =
    \sum_{\substack{r,s \in \Z \\ r + s = t-2}} (-1)^{\bar b}\
    \begin{tikzpicture}[anchorbase]
      \draw[<-] (0,-0.5) arc(90:450:0.3);
      \draw[->] (0,0.3) arc(90:450:0.3);
      \redcircle{(-0.3,-0.8)} node[anchor=east,color=black] {$r$};
      \bluedot{(0.26,-0.65)} node[anchor=west,color=black] {$b$};
      \bluedot{(0.26,-0.95)} node[anchor=west,color=black] {$f$};
      \redcircle{(0.3,0)} node[anchor=west,color=black] {$s$};
      \bluedot{(-0.3,0)} node[anchor=east,color=black] {$\chk{b}$};
    \end{tikzpicture}
    \\
    &\stackrel{\mathclap{\cref{eq:neg-cc-bubble}}}{=}\
    (-1)^{\bar b} \tr(\chk{b})\
    \begin{tikzpicture}[anchorbase]
      \draw[<-] (0,0.3) arc(90:450:0.3);
      \redcircle{(-0.3,0)} node[anchor=north east,color=black] {\dotlabel{t+k-1}};
      \bluedot{(0.26,0.15)} node[anchor=west,color=black] {$b$};
      \bluedot{(0.26,-0.15)} node[anchor=west,color=black] {$f$};
    \end{tikzpicture}
    \ - \sum_{u=0}^{k-1}\
    \begin{tikzpicture}[anchorbase]
      \draw[<-] (0,0.5) arc(90:450:0.3);
      \draw[<-] (0,-0.5) arc(90:450:0.3);
      \redcircle{(-0.3,-0.8)} node[anchor=east,color=black] {\dotlabel{u+t-1}};
      \bluedot{(0.26,-0.65)} node[anchor=west,color=black] {\dotlabel{\psi^{-u}(\chk{b})}};
      \bluedot{(0.26,-0.95)} node[anchor=west,color=black] {\dotlabel{f}};
      \redcircle{(0.3,0.2)} node[anchor=west,color=black] {$k$};
      \greensquare{(0,-0.1)} node[anchor=north east,color=black] {\squarelabel{(u,b)}};
    \end{tikzpicture}
    + \sum_{\substack{r \ge -1,\, s \ge 0 \\ r + s = t-2}} (-1)^{\bar b}\
    \begin{tikzpicture}[anchorbase]
      \draw[<-] (0,-0.5) arc(90:450:0.3);
      \draw[->] (0,0.3) arc(90:450:0.3);
      \redcircle{(-0.3,-0.8)} node[anchor=east,color=black] {$r$};
      \bluedot{(0.26,-0.65)} node[anchor=west,color=black] {$b$};
      \bluedot{(0.26,-0.95)} node[anchor=west,color=black] {$f$};
      \redcircle{(0.3,0)} node[anchor=west,color=black] {$s$};
      \bluedot{(-0.3,0)} node[anchor=east,color=black] {$\chk{b}$};
    \end{tikzpicture}
    \\
    \ &\stackrel[\mathclap{\cref{eq:f-in-basis}}]{\mathclap{\substack{\cref{rel:dot-token-up-slide} \\ \cref{eq:f-right-cupcap-slide}}}}{=}
    \begin{tikzpicture}[anchorbase]
      \draw[<-] (0,0.3) arc(90:450:0.3);
      \redcircle{(-0.3,0)} node[anchor=north east,color=black] {\dotlabel{t+k-1}};
      \bluedot{(0.3,0)} node[anchor=west,color=black] {$f$};
    \end{tikzpicture}
    \ - \sum_{u=0}^{k-1} \
    \begin{tikzpicture}[anchorbase]
      \draw[<-] (0,0.5) arc(90:450:0.3);
      \draw[<-] (0.3,-0.8) arc(0:180:0.3) -- (-0.3,-1.6) arc(180:360:0.3) -- (0.3,-0.8);
      \redcircle{(-0.3,-0.8)} node[anchor=east,color=black] {$u$};
      \bluedot{(-0.3,-1.2)} node[anchor=east,color=black] {$\chk{b}$};
      \redcircle{(-0.3,-1.6)} node[anchor=east,color=black] {$t-1$};
      \bluedot{(0.3,-1.4)} node[anchor=west,color=black] {$f$};
      \redcircle{(0.3,0.2)} node[anchor=west,color=black] {$k$};
      \greensquare{(0,-0.1)} node[anchor=north east,color=black] {\squarelabel{(u,b)}};
    \end{tikzpicture}
    \ + \sum_{\substack{r \ge -1,\, s \ge 0 \\ r + s = t-2}} (-1)^{\bar b}\
    \begin{tikzpicture}[anchorbase]
      \draw[<-] (0,-0.5) arc(90:450:0.3);
      \draw[->] (0,0.3) arc(90:450:0.3);
      \redcircle{(-0.3,-0.8)} node[anchor=east,color=black] {$r$};
      \bluedot{(0.26,-0.65)} node[anchor=west,color=black] {$b$};
      \bluedot{(0.26,-0.95)} node[anchor=west,color=black] {$f$};
      \redcircle{(0.3,0)} node[anchor=west,color=black] {$s$};
      \bluedot{(-0.3,0)} node[anchor=east,color=black] {$\chk{b}$};
    \end{tikzpicture}
    \\
    &\stackrel{\mathclap{\cref{rel:up-down-doublecross}}}{=}
    \begin{tikzpicture}[anchorbase]
      \draw[-<-=.67] (-0.25,0) .. controls (-0.25,-0.3) and (0.25,-0.2) .. (0.25,-0.5) .. controls (0.25,-0.8) and (-0.25,-0.7) .. (-0.25,-1) arc (180:360:0.25) .. controls (0.25,-0.7) and (-0.25,-0.8) .. (-0.25,-0.5) .. controls (-0.25,-0.2) and (0.25,-0.3) .. (0.25,0) arc (0:180:0.25);
      \redcircle{(0.25,0)} node[anchor=west,color=black] {$k$};
      \redcircle{(-0.25,-1)} node[anchor=east,color=black] {\dotlabel{t-1}};
      \bluedot{(0.25,-1)} node[anchor=west,color=black] {$f$};
    \end{tikzpicture}
    \ + \sum_{\substack{r,s \ge 0 \\ r + s = t-2}} (-1)^{\bar b}\
    \begin{tikzpicture}[anchorbase]
      \draw[<-] (0,-0.5) arc(90:450:0.3);
      \draw[->] (0,0.3) arc(90:450:0.3);
      \redcircle{(-0.3,-0.8)} node[anchor=east,color=black] {$r$};
      \bluedot{(0.26,-0.65)} node[anchor=west,color=black] {$b$};
      \bluedot{(0.26,-0.95)} node[anchor=west,color=black] {$f$};
      \redcircle{(0.3,0)} node[anchor=west,color=black] {$s$};
      \bluedot{(-0.3,0)} node[anchor=east,color=black] {$\chk{b}$};
    \end{tikzpicture}
    \ + (-1)^{\bar b}\
    \begin{tikzpicture}[anchorbase]
      \draw[<-] (0,-0.5) arc(90:450:0.3);
      \draw[->] (0,0.3) arc(90:450:0.3);
      \redcircle{(-0.3,-0.8)} node[anchor=east,color=black] {$-1$};
      \bluedot{(0.26,-0.65)} node[anchor=west,color=black] {$b$};
      \bluedot{(0.26,-0.95)} node[anchor=west,color=black] {$f$};
      \redcircle{(0.3,0)} node[anchor=west,color=black] {\dotlabel{t-1}};
      \bluedot{(-0.3,0)} node[anchor=east,color=black] {$\chk{b}$};
    \end{tikzpicture}
    \\
    &\stackrel{\mathclap{\substack{\cref{eq:left-cap-def} \\ \cref{eq:inf-grass1}}}}{=}
    \begin{tikzpicture}[anchorbase]
      \draw[->] (-0.25,0) .. controls (-0.25,-0.3) and (0.25,-0.2) .. (0.25,-0.5) arc (360:180:0.25) .. controls (-0.25,-0.2) and (0.25,-0.3) .. (0.25,0) arc (0:180:0.25);
      \redcircle{(-0.25,-0.5)} node[anchor=east,color=black] {\dotlabel{t-1}};
      \bluedot{(0.26,-0.5)} node[anchor=west,color=black] {$f$};
    \end{tikzpicture}
    \ + \sum_{\substack{r,s \ge 0 \\ r + s = t-2}} (-1)^{\bar b}\
    \begin{tikzpicture}[anchorbase]
      \draw[<-] (0,-0.5) arc(90:450:0.3);
      \draw[->] (0,0.3) arc(90:450:0.3);
      \redcircle{(-0.3,-0.8)} node[anchor=east,color=black] {$r$};
      \bluedot{(0.26,-0.65)} node[anchor=west,color=black] {$b$};
      \bluedot{(0.26,-0.95)} node[anchor=west,color=black] {$f$};
      \redcircle{(0.3,0)} node[anchor=west,color=black] {$s$};
      \bluedot{(-0.3,0)} node[anchor=east,color=black] {$\chk{b}$};
    \end{tikzpicture}
    \ - \delta_{k,0} \tr(fb)\
    \begin{tikzpicture}[anchorbase]
      \draw[->] (0,0.3) arc(90:450:0.3);
      \redcircle{(0.3,0)} node[anchor=west,color=black] {\dotlabel{t-1}};
      \bluedot{(-0.3,0)} node[anchor=east,color=black] {$\chk{b}$};
    \end{tikzpicture}
    \\
    &\stackrel{\mathclap{\cref{eq:f-in-dual-basis}}}{=}
    \begin{tikzpicture}[anchorbase]
      \draw[->] (-0.25,0) .. controls (-0.25,-0.3) and (0.25,-0.2) .. (0.25,-0.5) arc (360:180:0.25) .. controls (-0.25,-0.2) and (0.25,-0.3) .. (0.25,0) arc (0:180:0.25);
      \redcircle{(-0.25,-0.5)} node[anchor=east,color=black] {\dotlabel{t-1}};
      \bluedot{(0.26,-0.5)} node[anchor=west,color=black] {$f$};
    \end{tikzpicture}
    \ + \sum_{\substack{r,s \ge 0 \\ r + s = t-2}} (-1)^{\bar b}\
    \begin{tikzpicture}[anchorbase]
      \draw[<-] (0,-0.5) arc(90:450:0.3);
      \draw[->] (0,0.3) arc(90:450:0.3);
      \redcircle{(-0.3,-0.8)} node[anchor=east,color=black] {$r$};
      \bluedot{(0.26,-0.65)} node[anchor=west,color=black] {$b$};
      \bluedot{(0.26,-0.95)} node[anchor=west,color=black] {$f$};
      \redcircle{(0.3,0)} node[anchor=west,color=black] {$s$};
      \bluedot{(-0.3,0)} node[anchor=east,color=black] {$\chk{b}$};
    \end{tikzpicture}
    \ - \delta_{k,0}\
    \begin{tikzpicture}[anchorbase]
      \draw[->] (0,0.3) arc(90:450:0.3);
      \redcircle{(0.3,0)} node[anchor=west,color=black] {\dotlabel{t-1}};
      \bluedot{(-0.3,0)} node[anchor=east,color=black] {$f$};
    \end{tikzpicture}
    \\
    &\stackrel[\mathclap{\cref{eq:f-right-cupcap-slide}}]{\mathclap{\cref{rel:dotslide-right1}}}{=}\ \
    \begin{tikzpicture}[anchorbase]
      \draw[->] (-0.25,0) .. controls (-0.25,-0.3) and (0.25,-0.2) .. (0.25,-0.5) arc (360:180:0.25) .. controls (-0.25,-0.2) and (0.25,-0.3) .. (0.25,0) arc (0:180:0.25);
      \redcircle{(0.25,0)} node[anchor=west,color=black] {\dotlabel{t-1}};
      \bluedot{(0.26,-0.5)} node[anchor=west,color=black] {$f$};
    \end{tikzpicture}
    \ - \delta_{k,0}\
    \begin{tikzpicture}[anchorbase]
      \draw[->] (0,0.3) arc(90:450:0.3);
      \redcircle{(0.3,0)} node[anchor=west,color=black] {\dotlabel{t-1}};
      \bluedot{(-0.3,0)} node[anchor=east,color=black] {$f$};
    \end{tikzpicture}
    \stackrel[\cref{eq:left-cup-def}]{\cref{rel:leftcurl-l-positive}}{=} \delta_{k,0} \left(
    \begin{tikzpicture}[anchorbase]
      \draw[->] (-0.25,0) .. controls (-0.25,-0.3) and (0.25,-0.2) .. (0.25,-0.5) .. controls (0.25,-0.8) and (-0.25,-0.7) .. (-0.25,-1) arc (180:360:0.25) .. controls (0.25,-0.7) and (-0.25,-0.8) .. (-0.25,-0.5) .. controls (-0.25,-0.2) and (0.25,-0.3) .. (0.25,0) arc (0:180:0.25);
      \redcircle{(0.25,0)} node[anchor=west,color=black] {\dotlabel{t-1}};
      \bluedot{(0.25,-0.5)} node[anchor=west,color=black] {$f$};
    \end{tikzpicture}
    \ -\
    \begin{tikzpicture}[anchorbase]
      \draw[->] (0,0.3) arc(90:450:0.3);
      \redcircle{(0.3,0)} node[anchor=west,color=black] {\dotlabel{t-1}};
      \bluedot{(-0.3,0)} node[anchor=east,color=black] {$f$};
    \end{tikzpicture}
    \right)
    \stackrel{\substack{\cref{eq:tokenslide-rightcross2} \\ \cref{rel:down-up-doublecross}}}{=} 0.
  \end{align*}

  Now suppose $k<0$.  If we let $\cdot$ denote multiplication in $F^\op$, we have
  \begin{align*}
    \omega &\left(
    \sum_{\substack{r,s \ge 0 \\ r + s = t}} \
    \begin{tikzpicture}[anchorbase]
      \draw[<-] (0,0.3) arc(90:450:0.3);
      \draw[->] (0,-0.5) arc(90:450:0.3);
      \redcircle{(-0.3,0)} node[anchor=east,color=black] {\dotlabel{r+k-1}};
      \bluedot{(0.3,0)} node[anchor=west,color=black] {$fb$};
      \redcircle{(0.3,-0.8)} node[anchor=west,color=black] {\dotlabel{s-k-1}};
      \bluedot{(-0.3,-0.8)} node[anchor=east,color=black] {$\chk{b}g$};
    \end{tikzpicture}
    \right)
    = \sum_{\substack{r,s \ge 0 \\ r + s = t}} (-1)^{(\bar f + \bar b)(\bar b + \bar g)}\
    \begin{tikzpicture}[anchorbase]
      \draw[<-] (0,0.3) arc(90:450:0.3);
      \draw[->] (0,-0.5) arc(90:450:0.3);
      \bluedot{(-0.3,0)} node[anchor=east,color=black] {$\chk{b}g$};
      \redcircle{(0.3,0)} node[anchor=west,color=black] {\dotlabel{s-k-1}};
      \bluedot{(0.3,-0.8)} node[anchor=west,color=black] {$fb$};
      \redcircle{(-0.3,-0.8)} node[anchor=east,color=black] {\dotlabel{r+k-1}};
    \end{tikzpicture}
    \\
    &\qquad \qquad \stackrel[\mathclap{\cref{rel:dot-right-cupcap}}]{\mathclap{\substack{\cref{rel:f-left-cap} \\ \cref{rel:f-left-cup}}}}{=}\ \sum_{\substack{r,s \ge 0 \\ r + s = t}} (-1)^{(\bar f + \bar b)(\bar b + \bar g)} \
    \begin{tikzpicture}[anchorbase]
      \draw[<-] (0,0.3) arc(90:450:0.3);
      \draw[->] (0,-0.5) arc(90:450:0.3);
      \redcircle{(-0.3,0)} node[anchor=east,color=black] {\dotlabel{s-k-1}};
      \bluedot{(0.3,0)} node[anchor=west,color=black] {$\psi^{-k}(\chk{b}) \psi^{-k}(g)$};
      \redcircle{(0.3,-0.8)} node[anchor=west,color=black] {\dotlabel{r+k-1}};
      \bluedot{(-0.3,-0.8)} node[anchor=east,color=black] {$\psi^{-k}(f) \psi^{-k}(b)$};
    \end{tikzpicture}
    \\
    &\qquad \qquad = \sum_{\substack{r,s \ge 0 \\ r + s = t}} (-1)^{\bar f \bar g + \bar b}\
    \begin{tikzpicture}[anchorbase]
      \draw[<-] (0,0.3) arc(90:450:0.3);
      \draw[->] (0,-0.5) arc(90:450:0.3);
      \redcircle{(-0.3,0)} node[anchor=east,color=black] {\dotlabel{s-k-1}};
      \bluedot{(0.3,0)} node[anchor=west,color=black] {$ \psi^{-k}(g) \cdot \psi^{-k}(\chk{b})$};
      \redcircle{(0.3,-0.8)} node[anchor=west,color=black] {\dotlabel{r+k-1}};
      \bluedot{(-0.3,-0.8)} node[anchor=east,color=black] {$\psi^{-k}(b) \cdot \psi^{-k}(f)$};
    \end{tikzpicture}
    \\
    &\qquad \qquad = - \delta_{t,0} (-1)^{\bar f \bar g}  \tr \left( \psi^{-k}(g) \cdot \psi^{-k}(f) \right)
    \\
    &\qquad \qquad = - \delta_{t,0} \tr(fg)
    = \omega \big( - \delta_{t,0} \tr(fg) \big),
  \end{align*}
  where, in the fourth equality, we use \cref{eq:inf-grass3} for $-k \ge 0$ case, together with the fact that $\{(-1)^{\bar b} \psi^{-k}(b) : b \in B\}$ is the basis for $F^\op$ left dual to the basis $\{\psi^{-k}(\chk{b}) : b \in B\}$.
  \details{
    For $a,b \in B$, we have
    \begin{multline*}
      (-1)^{\bar b} \tr \left( \psi^{-k}(b) \cdot \psi^{-k}(\chk{a}) \right)
      = (-1)^{\bar b + \bar a \bar b} \tr \left( \psi^{-k}(\chk{a}) \psi^{-k}(b) \right)
      \\
      = (-1)^{\bar b + \bar a \bar b} \tr ( \chk{a} b )
      = (-1)^{\bar b + \bar a \bar b} \delta_{a,b}
      = \delta_{a,b}.
    \end{multline*}
  }
  Since $\omega$ is an involution, \cref{eq:inf-grass3} follows.
\end{proof}

\begin{lem}
  The following relations hold:

  \noindent\begin{minipage}{0.45\linewidth}
    \begin{equation} \label{rel:dot-left-cap-slide}
      \begin{tikzpicture}[anchorbase]
        \draw[<-] (0,0) -- (0,0.3) arc (180:0:.3) -- (0.6,0);
        \redcircle{(0.6,0.3)};
      \end{tikzpicture}
      \ =\
      \begin{tikzpicture}[anchorbase]
        \draw[<-] (0,0) -- (0,0.3) arc (180:0:.3) -- (0.6,0);
        \redcircle{(0.0,0.3)};
      \end{tikzpicture}
      \ +
      \begin{tikzpicture}[anchorbase]
        \draw[<-] (0,-0.3) -- (0,0) arc (180:0:.3) -- (0.6,-0.3);
        \draw[->] (0.6,1) arc (90:-270:.3);
        \bluedot{(0.6,0)} node[anchor=west,color=black] {$\chk{b}$};
        \bluedot{(0.3,0.7)} node[anchor=east,color=black] {\dotlabel{\psi^{-1}(b) - b}};
        \redcircle{(0.9,0.7)} node[anchor=west,color=black] {$k$};
      \end{tikzpicture}\ ,
    \end{equation}
  \end{minipage}%
  \begin{minipage}{0.55\linewidth}
    \begin{equation} \label{rel:dot-left-cup-slide}
      \begin{tikzpicture}[anchorbase]
        \draw[<-] (0,0) -- (0,-0.3) arc (180:360:.3) -- (0.6,0);
        \redcircle{(0.6,-0.3)};
      \end{tikzpicture}
      \ =\
      \begin{tikzpicture}[anchorbase]
        \draw[<-] (0,0) -- (0,-0.3) arc (180:360:.3) -- (0.6,0);
        \redcircle{(0.0,-0.3)};
      \end{tikzpicture}
      - (-1)^{\bar b}
      \begin{tikzpicture}[anchorbase]
        \draw[<-] (0,0.3) -- (0,0) arc (180:360:.3) -- (0.6,0.3);
        \draw[->] (0.6,-0.5) arc (90:450:.3);
        \bluedot{(0.6,0)} node[anchor=west,color=black] {$\chk{b}$};
        \bluedot{(0.3,-0.8)} node[anchor=east,color=black] {\dotlabel{\psi^{-1}(b) - b}};
        \redcircle{(0.9,-0.8)} node[anchor=west,color=black] {$-k$};
      \end{tikzpicture}\ .
    \end{equation}
  \end{minipage}\par\vspace{\belowdisplayskip}

  \noindent In particular, if $\psi = \id$, then dots slide over left cups and caps.
\end{lem}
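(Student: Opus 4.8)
The plan is to first reduce \cref{rel:dot-left-cup-slide} to \cref{rel:dot-left-cap-slide} by symmetry. Applying the involution $\omega$ of \cref{lem:flip-symmetry}, and using \cref{cor:omega-left-cupcap} to track the images of the left cup and cap, the identity $\omega(x) = x'$ (with $x'$ as in \cref{eq:right-mates}), the fact that $\omega$ reverses bubble orientation and sends $k$ to $-k$, and the fact that the Nakayama automorphism of $F^{\op}$ is $\psi^{-1}$, one checks that $\omega$ carries \cref{rel:dot-left-cap-slide} onto \cref{rel:dot-left-cup-slide}. So it suffices to prove \cref{rel:dot-left-cap-slide}, and the final assertion follows at once, since $\psi = \id$ annihilates the token $\psi^{-1}(b) - b$ and hence the entire correction term in both identities.

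For \cref{rel:dot-left-cap-slide}, the idea is to slide the dot off the right ($\sQ_+$) strand of the left cap $d'$ and ``around'' it, reducing to the known effect of a dot passing through the crossings $t'$ (\cref{eq:dotslide-left2}) and $t$ (\cref{rel:dotslide-right1}). I would unfold $d'$ via its definition \cref{eq:left-cap-def}, treating $k \ge 0$ and $k < 0$ separately: for $k \ge 0$ the left cap is a decorated composite of $t'$, a power $x^k$, and the right cap $d$, so the dot slides freely over $d$ by \cref{rel:dot-right-cupcap}, past $x^k$ and tokens by \cref{rel:dot-token-up-slide}, and across $t'$ at the cost of the dual-basis term in \cref{eq:dotslide-left2}; for $k < 0$ one works with the decorated left cap of label $(-k-1,1)$ from the inversion relation, composes it with a crossing, re-expands using \cref{rel:up-down-doublecross,rel:down-up-doublecross}, and again moves the dot across $t'$. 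In either case the crossing part of the resulting diagram recombines --- with the curl-vanishing relations \cref{rel:leftcurl-l-positive,rel:leftcurl-l-negative} and \cref{rel:cc-l-positive,rel:ccc-l-negative} --- into $d'$ bearing a single $\chk{b}$-token on its right strand, matching the shape on the right-hand side of \cref{rel:dot-left-cap-slide}.

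It then remains to recognise the accompanying closed diagram (a curl and/or a bubble carrying a $k$-dot on the dual strand) as the counterclockwise bubble with a $k$-dot and a token $\psi^{-1}(b) - b$. I would collapse the closed part using the definitions \cref{eq:neg-clockwise-bubble,eq:neg-cc-bubble} of the negatively dotted bubbles together with the infinite grassmannian relations of \cref{prop:inf-grass}, while carefully recording the Nakayama twists: a token crossing a left cup or cap contributes $\psi^k$ by \cref{rel:f-left-cap,rel:f-left-cup}, a token crossing a dot contributes $\psi^{\pm 1}$ by \cref{rel:dot-token-up-slide,rel:dot-token-down-slide}, and the double dual basis $\chk{(\chk{b})} = (-1)^{\bar b}\psi^{-1}(b)$ of \cref{eq:double-dual} supplies the remaining $\psi^{-1}$. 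The essential reason the difference $\psi^{-1}(b) - b$ appears, rather than a single monomial, is that the dot on the downward strand in \cref{rel:dot-left-cap-slide} is by convention the right mate $x'$ of \cref{eq:right-mates} --- defined through $c$ and $d$, not through $c'$ and $d'$ --- so matching it against the $(b, \chk{b})$-correction of \cref{eq:dotslide-left2} forces precisely that combination.

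The main obstacle is exactly this bookkeeping: carrying the signs $(-1)^{\bar b}$ and the twists $\psi^{\pm 1}$, $\psi^{\pm k}$ correctly through a rather long diagram chase and isolating the combination $\psi^{-1}(b) - b$, and at the same time verifying that the two separate case analyses ($k \ge 0$ and $k < 0$) both collapse to the single uniform formula in the statement, the conventions for negatively dotted bubbles and for out-of-range dot labels absorbing the degenerate cases (in particular $k = 0$).
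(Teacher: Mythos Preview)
Your overall strategy matches the paper's: reduce \cref{rel:dot-left-cup-slide} to \cref{rel:dot-left-cap-slide} via $\omega$, and for $k \ge 0$ unfold $d'$ via \cref{eq:left-cap-def} and push the dot through $t'$ using \cref{eq:dotslide-left1,eq:dotslide-left2}. The paper does exactly this, computing each side of \cref{rel:dot-left-cap-slide} against the common unfolded form (a $t'$ composed with $x^k$ and a right cap) and subtracting; the double-dual identity \cref{eq:double-dual} is indeed what produces the $\psi^{-1}(b)$ term, as you anticipate.

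For $k < 0$ the paper uses a shortcut you do not describe. Rather than manipulating the implicitly defined decorated left cap directly, it composes both sides of \cref{rel:dot-left-cap-slide} on the bottom with the full invertible row \cref{eq:inversion-relation-neg-l}. Invertibility then reduces the claim to a finite list of simpler identities --- one from the crossing component and one from each decorated-cup component --- each of which is checked directly using \cref{rel:dotslide-right1,rel:dotslide-right2,rel:leftcurl-l-negative,rel:ccc-l-negative} and, for the top case $r = -k-1$, the bubble-reversal identity \cref{eq:central-bubble-reverse}. Your plan of composing with a single crossing and re-expanding via \cref{rel:up-down-doublecross,rel:down-up-doublecross} could in principle be pushed through, but it leaves the remaining components of the inverse unaccounted for and forces you to unwind the implicit definition of the decorated cap; the inversion-map trick bypasses all of that and makes the bookkeeping you flag as the main obstacle essentially disappear.
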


\begin{proof}
  It suffices to prove \cref{rel:dot-left-cap-slide}, since then \cref{rel:dot-left-cup-slide} follows by applying $\omega$.

  First suppose $k \ge 0$.  Then we have
  \[
    \begin{tikzpicture}[anchorbase]
      \draw[<-] (0,0) -- (0,0.3) arc (180:0:.3) -- (0.6,0);
      \redcircle{(0.6,0.3)};
    \end{tikzpicture}
    \ \stackrel{\cref{eq:left-cap-def}}{=}\
    \begin{tikzpicture}[anchorbase]
      \draw[<-] (-0.2,-0.2) .. controls (0.3,0.3) and (0.6,0.3) .. (0.6,0.6) arc(0:180:0.3) .. controls (0,0.3) and (0.3,0.3) .. (0.8,-0.2);
      \redcircle{(0.6,0.6)} node[anchor=west,color=black] {$k$};
      \redcircle{(0.55,0.03)};
    \end{tikzpicture}
    \stackrel[\cref{rel:dot-right-cupcap}]{\cref{eq:dotslide-left2}}{=}
    \begin{tikzpicture}[anchorbase]
      \draw[<-] (-0.2,-0.2) .. controls (0.3,0.3) and (0.6,0.3) .. (0.6,0.6) arc(0:180:0.3) .. controls (0,0.3) and (0.3,0.3) .. (0.8,-0.2);
      \redcircle{(0.6,0.6)} node[anchor=west,color=black] {$k+1$};
    \end{tikzpicture}
    \ + (-1)^{\bar b}
    \begin{tikzpicture}[anchorbase]
      \draw[<-] (0,-0.3) -- (0,0) arc (180:0:.3) -- (0.6,-0.3);
      \draw[->] (0.6,1) arc (90:-270:.3);
      \bluedot{(0,0)} node[anchor=east,color=black] {$b$};
      \bluedot{(0.86,0.55)} node[anchor=west,color=black] {$\chk{b}$};
      \redcircle{(0.86,0.85)} node[anchor=south west,color=black] {$k$};
    \end{tikzpicture}
    =
    \begin{tikzpicture}[anchorbase]
      \draw[<-] (-0.2,-0.2) .. controls (0.3,0.3) and (0.6,0.3) .. (0.6,0.6) arc(0:180:0.3) .. controls (0,0.3) and (0.3,0.3) .. (0.8,-0.2);
      \redcircle{(0.6,0.6)} node[anchor=west,color=black] {$k+1$};
    \end{tikzpicture}
    \ +
    \begin{tikzpicture}[anchorbase]
      \draw[<-] (0,-0.3) -- (0,0) arc (180:0:.3) -- (0.6,-0.3);
      \draw[->] (0.6,1) arc (90:-270:.3);
      \bluedot{(0.6,0)} node[anchor=west,color=black] {$\chk{b}$};
      \bluedot{(0.3,0.7)} node[anchor=east,color=black] {$\psi^{-1}(b)$};
      \redcircle{(0.9,0.7)} node[anchor=west,color=black] {$k$};
    \end{tikzpicture}
    \ .
  \]
  In the final equality above, we move the tokens labeled $b$ and $\chk{b}$ over the left cup and cap using \cref{rel:f-left-cap,rel:f-left-cup}.  Then we used the fact that $\{\psi^k(\chk{b}) : b \in B\}$ is the basis left dual to $\{\psi^k(b) : b \in B\}$ to remove the $\psi^k$.  Finally, we used \cref{eq:double-dual}.
  We also have
  \[
    \begin{tikzpicture}[anchorbase]
      \draw[<-] (0,0) -- (0,0.3) arc (180:0:.3) -- (0.6,0);
      \redcircle{(0.0,0.3)};
    \end{tikzpicture}
    \ \stackrel{\cref{eq:left-cap-def}}{=}\
    \begin{tikzpicture}[anchorbase]
      \draw[<-] (-0.2,-0.2) .. controls (0.3,0.3) and (0.6,0.3) .. (0.6,0.6) arc(0:180:0.3) .. controls (0,0.3) and (0.3,0.3) .. (0.8,-0.2);
      \redcircle{(0.6,0.6)} node[anchor=west,color=black] {$k$};
      \redcircle{(0.05,0.04)};
    \end{tikzpicture}
    \stackrel{\cref{eq:dotslide-left1}}{=}
    \begin{tikzpicture}[anchorbase]
      \draw[<-] (-0.2,-0.2) .. controls (0.3,0.3) and (0.6,0.3) .. (0.6,0.6) arc(0:180:0.3) .. controls (0,0.3) and (0.3,0.3) .. (0.8,-0.2);
      \redcircle{(0.6,0.6)} node[anchor=west,color=black] {$k+1$};
    \end{tikzpicture}
    \ +
    \begin{tikzpicture}[anchorbase]
      \draw[<-] (0,-0.3) -- (0,0) arc (180:0:.3) -- (0.6,-0.3);
      \draw[->] (0.6,1) arc (90:-270:.3);
      \bluedot{(0.6,0)} node[anchor=west,color=black] {$\chk{b}$};
      \bluedot{(0.3,0.7)} node[anchor=east,color=black] {$b$};
      \redcircle{(0.9,0.7)} node[anchor=west,color=black] {$k$};
    \end{tikzpicture}
  \]
  Relation \eqref{rel:dot-left-cap-slide} follows.

  Now suppose $k < 0$.  Composing both sides of \cref{rel:dot-left-cap-slide} on the bottom with the invertible map \cref{eq:inversion-relation-neg-l}, we see that it suffices to prove
  \begin{gather} \label{eq:dot-leftslide-proof1}
    \begin{tikzpicture}[anchorbase]
      \draw[->] (0,0) .. controls (0.3,0.3) and (0.6,0.3) .. (0.6,0.6) arc(0:180:0.3) .. controls (0,0.3) and (0.3,0.3) .. (0.6,0);
      \redcircle{(0.6,0.6)};
    \end{tikzpicture}
    =
    \begin{tikzpicture}[anchorbase]
      \draw[->] (0,0) .. controls (0.3,0.3) and (0.6,0.3) .. (0.6,0.6) arc(0:180:0.3) .. controls (0,0.3) and (0.3,0.3) .. (0.6,0);
      \redcircle{(0,0.6)};
    \end{tikzpicture}
    +
    \begin{tikzpicture}[anchorbase]
      \draw[<-] (0.6,-0.6) .. controls (0.3,-0.3) and (0,-0.3) .. (0,0) arc (180:0:.3) .. controls (0.6,-0.3) and (0.3,-0.3) .. (0,-0.6);
      \draw[->] (0.6,1) arc (90:-270:.3);
      \bluedot{(0.6,0)} node[anchor=west,color=black] {$\chk{b}$};
      \bluedot{(0.3,0.7)} node[anchor=east,color=black] {\dotlabel{\psi^{-1}(b)-b}};
      \redcircle{(0.9,0.7)} node[anchor=west,color=black] {$k$};
    \end{tikzpicture}
    \qquad \text{and}
    \\ \label{eq:dot-leftslide-proof2}
    \begin{tikzpicture}[anchorbase]
      \draw[->-=.15] (0.6,2) arc(0:180:0.3) -- (0,1.3) arc (180:360:0.3) -- (0.6,2);
      \redcircle{(0.6,1.9)} node[anchor=west,color=black] {$r+1$};
      \bluedot{(0.6,1.4)} node[anchor=east,color=black] {$\chk{a}$};
    \end{tikzpicture}
    =
    \begin{tikzpicture}[anchorbase]
      \draw[->-=.15] (0.6,2) arc(0:180:0.3) -- (0,1.3) arc (180:360:0.3) -- (0.6,2);
      \redcircle{(0.6,1.9)} node[anchor=west,color=black] {$r$};
      \bluedot{(0.6,1.4)} node[anchor=east,color=black] {$\chk{a}$};
      \redcircle{(0,2)};
    \end{tikzpicture}
    \ +
    \begin{tikzpicture}[anchorbase]
      \draw[<-] (0,0.3) arc(90:450:0.3);
      \draw[->] (-0.3,-0.8) -- (-0.3,-1.6) arc (180:360:0.3) -- (0.3,-0.8) arc (0:180:0.3);
      \redcircle{(0.3,0)} node[anchor=west,color=black] {$k$};
      \bluedot{(-0.3,0)} node[anchor=east,color=black] {\dotlabel{\psi^{-1}(b)-b}};
      \bluedot{(0.3,-0.8)} node[anchor=west,color=black] {$\chk{b}$};
      \redcircle{(0.3,-1.2)} node[anchor=west,color=black] {$r$};
      \bluedot{(0.3,-1.6)} node[anchor=west,color=black] {$\chk{a}$};
    \end{tikzpicture}
    \quad \text{for all } 0 \le r < -k,\ a \in B.
  \end{gather}
  All the terms in the sum in \cref{eq:dot-leftslide-proof1} are zero by \cref{rel:leftcurl-l-negative}.  Then we compute
  \begin{gather*}
    \begin{tikzpicture}[anchorbase]
      \draw[->] (0,0) .. controls (0.3,0.3) and (0.6,0.3) .. (0.6,0.6) arc(0:180:0.3) .. controls (0,0.3) and (0.3,0.3) .. (0.6,0);
      \redcircle{(0.6,0.6)};
    \end{tikzpicture}
    \ \stackrel{\cref{rel:dotslide-right1}}{=}\
    \begin{tikzpicture}[anchorbase]
      \draw[->] (0,0) .. controls (0.3,0.3) and (0.6,0.3) .. (0.6,0.6) arc(0:180:0.3) .. controls (0,0.3) and (0.3,0.3) .. (0.6,0);
      \redcircle{(0.15,0.15)};
    \end{tikzpicture}
    + (-1)^{\bar b}
    \begin{tikzpicture}[anchorbase]
      \draw[->] (0,-0.3) -- (0,0) arc (180:0:.3) -- (0.6,-0.3);
      \draw[<-] (0.6,1) arc (90:-270:.3);
      \bluedot{(0,0)} node[anchor=west,color=black] {$b$};
      \bluedot{(0.9,0.7)} node[anchor=west,color=black] {$\chk{b}$};
    \end{tikzpicture}
    \stackrel[\cref{eq:f-in-basis}]{\substack{\cref{rel:leftcurl-l-negative} \\ \cref{rel:ccc-l-negative}}}{=} \delta_{k,-1}\
    \begin{tikzpicture}[anchorbase]
      \draw[->] (0,-0.3) -- (0,0) arc (180:0:.3) -- (0.6,-0.3);
    \end{tikzpicture}
    \qquad \text{and}
    \\
    \begin{tikzpicture}[anchorbase]
      \draw[->] (0,0) .. controls (0.3,0.3) and (0.6,0.3) .. (0.6,0.6) arc(0:180:0.3) .. controls (0,0.3) and (0.3,0.3) .. (0.6,0);
      \redcircle{(0,0.6)};
    \end{tikzpicture}
    \ \stackrel{\cref{rel:dotslide-right2}}{=}\
    \begin{tikzpicture}[anchorbase]
      \draw[->] (0,0) .. controls (0.3,0.3) and (0.6,0.3) .. (0.6,0.6) arc(0:180:0.3) .. controls (0,0.3) and (0.3,0.3) .. (0.6,0);
      \redcircle{(0.45,0.15)};
    \end{tikzpicture}
    + \
    \begin{tikzpicture}[anchorbase]
      \draw[->] (0,-0.3) -- (0,0) arc (180:0:.3) -- (0.6,-0.3);
      \draw[<-] (0.6,1) arc (90:-270:.3);
      \bluedot{(0,0)} node[anchor=west,color=black] {$\chk{b}$};
      \bluedot{(0.9,0.7)} node[anchor=west,color=black] {$b$};
    \end{tikzpicture}
    \stackrel[\cref{eq:f-in-dual-basis}]{\substack{\cref{rel:leftcurl-l-negative} \\ \cref{rel:ccc-l-negative}}}{=} \delta_{k,-1}\
    \begin{tikzpicture}[anchorbase]
      \draw[->] (0,-0.3) -- (0,0) arc (180:0:.3) -- (0.6,-0.3);
    \end{tikzpicture}\ .
  \end{gather*}
  Thus \cref{eq:dot-leftslide-proof1} holds.

  It remains to prove \cref{eq:dot-leftslide-proof2}.  If $0 \le r \le -k-3$, then all terms in \cref{eq:dot-leftslide-proof2} are zero by \cref{rel:ccc-l-negative}.  Now suppose $r = -k-2$.  Then all the terms in the sum on the right side of \cref{eq:dot-leftslide-proof2} are zero by \cref{rel:ccc-l-negative}.  We also have
  \[
    \begin{tikzpicture}[anchorbase]
      \draw[->-=.15] (0.6,2) arc(0:180:0.3) -- (0,1.3) arc (180:360:0.3) -- (0.6,2);
      \redcircle{(0.6,1.9)} node[anchor=west,color=black] {$r+1$};
      \bluedot{(0.6,1.4)} node[anchor=east,color=black] {$\chk{a}$};
    \end{tikzpicture}
    \ \stackrel{\cref{eq:f-right-cupcap-slide}}{=}
    \begin{tikzpicture}[anchorbase]
      \draw[->] (0,0.3) arc(90:450:0.3);
      \redcircle{(0.3,0)} node[anchor=west,color=black] {$-k-1$};
      \bluedot{(-0.3,0)} node[anchor=east,color=black] {$\chk{a}$};
    \end{tikzpicture}
    \stackrel{\cref{rel:ccc-l-negative}}{=} \tr(\chk{a})
    = \tr(\psi(\chk{a}))
    \stackrel{\cref{rel:ccc-l-negative}}{=}
    \begin{tikzpicture}[anchorbase]
      \draw[->] (0,0.3) arc(90:450:0.3);
      \redcircle{(0.3,0)} node[anchor=west,color=black] {$-k-1$};
      \bluedot{(-0.3,0)} node[anchor=east,color=black] {$\psi(\chk{a})$};
    \end{tikzpicture}
    \stackrel[\cref{eq:f-right-cupcap-slide}]{\substack{\cref{rel:dot-right-cupcap} \\ \cref{rel:dot-token-up-slide}}}{=}
    \begin{tikzpicture}[anchorbase]
      \draw[->-=.15] (0.6,2) arc(0:180:0.3) -- (0,1.3) arc (180:360:0.3) -- (0.6,2);
      \redcircle{(0.6,1.9)} node[anchor=west,color=black] {$r$};
      \bluedot{(0.6,1.4)} node[anchor=east,color=black] {$\chk{a}$};
      \redcircle{(0,2)};
    \end{tikzpicture}
    \ ,
  \]
  so \cref{eq:dot-leftslide-proof2} holds.  Finally, consider the case $r=-k-1$.  Then we have
  \begin{multline*}
    \begin{tikzpicture}[anchorbase]
      \draw[<-] (0,0.3) arc(90:450:0.3);
      \draw[->] (-0.3,-0.8) -- (-0.3,-1.6) arc (180:360:0.3) -- (0.3,-0.8) arc (0:180:0.3);
      \redcircle{(0.3,0)} node[anchor=west,color=black] {$k$};
      \bluedot{(-0.3,0)} node[anchor=east,color=black] {\dotlabel{\psi^{-1}(b)-b}};
      \bluedot{(0.3,-0.8)} node[anchor=west,color=black] {$\chk{b}$};
      \redcircle{(0.3,-1.2)} node[anchor=west,color=black] {$r$};
      \bluedot{(0.3,-1.6)} node[anchor=west,color=black] {$\chk{a}$};
    \end{tikzpicture}
    \stackrel[\substack{\cref{rel:ccc-l-negative} \\ \cref{eq:f-in-basis}}]{\substack{\cref{rel:dot-token-up-slide} \\ \cref{eq:f-right-cupcap-slide}}}{=}
    \begin{tikzpicture}[anchorbase]
      \draw[<-] (0,0.3) arc(90:450:0.3);
      \redcircle{(0.3,0)} node[anchor=west,color=black] {$k$};
      \bluedot{(-0.3,0)} node[anchor=east,color=black] {\dotlabel{\psi^{-r-1}(\chk{a})-\psi^{-r}(\chk{a})}};
    \end{tikzpicture}
    \\
    \stackrel{\cref{eq:central-bubble-reverse}}{=}
    \begin{tikzpicture}[anchorbase]
      \draw[->] (0,0.3) arc(90:450:0.3);
      \redcircle{(-0.3,0)} node[anchor=east,color=black] {$-k$};
      \bluedot{(0.3,0)} node[anchor=west,color=black] {\dotlabel{\psi^{-r-1}(\chk{a})-\psi^{-r}(\chk{a})}};
    \end{tikzpicture}
    \ \stackrel[\cref{rel:dot-right-cupcap}]{\cref{rel:dot-token-up-slide}}{=}
    \begin{tikzpicture}[anchorbase]
      \draw[->-=.15] (0.6,2) arc(0:180:0.3) -- (0,1.3) arc (180:360:0.3) -- (0.6,2);
      \redcircle{(0.6,1.9)} node[anchor=west,color=black] {$r+1$};
      \bluedot{(0.6,1.4)} node[anchor=east,color=black] {$\chk{a}$};
    \end{tikzpicture}
    -
    \begin{tikzpicture}[anchorbase]
      \draw[->-=.15] (0.6,2) arc(0:180:0.3) -- (0,1.3) arc (180:360:0.3) -- (0.6,2);
      \redcircle{(0.6,1.9)} node[anchor=west,color=black] {$r$};
      \bluedot{(0.6,1.4)} node[anchor=east,color=black] {$\chk{a}$};
      \redcircle{(0,2)};
    \end{tikzpicture}
  \end{multline*}
  \details{
    Using \cref{rel:dot-token-up-slide,eq:f-right-cupcap-slide,rel:ccc-l-negative}, the lower bubble in the first sum is equal to $\tr(\psi^r(\chk{b})\chk{a}) = \tr(\chk{b}\psi^{-r}(\chk{a}))$.  Then we use \cref{eq:f-in-basis}.
  }
  and so \cref{eq:dot-leftslide-proof2} holds.
\end{proof}

\begin{lem} \label{lem:bubble-determinant-identities}
  Recall our convention for computing determinants from \cref{eq:determinant-convention}.  For all $f \in F$ and $r > 0$, we have
  \begin{gather} \label{eq:ccbubble-in-terms-of-cbubble}
    \ccbubble{$f$}{\dotlabel{r-k-1}}
    = \sum_{b_1,\dotsc,b_{r-1} \in B} \det
    \left( \cbubble{$\chk{b}_{j-1}b_j$}{\dotlabel{i-j+k}} \right)_{i,j=1}^r,
    \\ \label{eq:cbubble-in-terms-of-ccbubble}
    \cbubble{$f$}{\dotlabel{r+k-1}}
    = (-1)^{r+1} \sum_{b_1,\dotsc,b_{r-1} \in B} \det
    \left( \ccbubble{$\chk{b}_{j-1}b_j$}{\dotlabel{i-j-k}} \right)_{i,j=1}^r,
  \end{gather}
  where we adopt the convention that $\chk{b}_0 = f$ and $b_r = 1$.
\end{lem}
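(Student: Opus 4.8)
\emph{Proof proposal.} The plan is to recognise \cref{eq:ccbubble-in-terms-of-cbubble,eq:cbubble-in-terms-of-ccbubble} as the graded Frobenius superalgebra versions of the two Jacobi--Trudi identities, and to prove each by showing that both sides obey the same recursion, which is read off from the infinite grassmannian relation \cref{eq:inf-grass3}. Throughout I work inside the commutative superalgebra generated by the (central) dotted bubbles; write $E_r(f)$ for the clockwise bubble carrying $r$ dots and a token $f$ (the morphism on the left of \cref{eq:inf-grass1}) and $H_r(f)$ for the corresponding counterclockwise bubble (left of \cref{eq:inf-grass2}), both $\kk$-linear in $f$ and defined for all $r \in \Z$ via \cref{eq:neg-clockwise-bubble,eq:neg-cc-bubble}. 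In this notation \cref{eq:inf-grass1,eq:inf-grass2} read $E_r(f) = -\delta_{r,k-1}\tr(f)$ for $r \le k-1$ and $H_r(f) = \delta_{r,-k-1}\tr(f)$ for $r \le -k-1$, and the goal \cref{eq:ccbubble-in-terms-of-cbubble} becomes the assertion that $H_{r-k-1}(f) = \mathcal D_r(f)$ for $r \ge 1$, where $\mathcal D_r(f) := \sum_{b_1,\dotsc,b_{r-1} \in B}\det\bigl(E_{i-j+k}(\chk b_{j-1}b_j)\bigr)_{i,j=1}^r$ with the conventions $\chk b_0 = f$, $b_r = 1$, and $\mathcal D_0(f) := \tr(f)$.

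First I would extract the recursion. Specializing the equality of the second and third expressions in \cref{eq:inf-grass3} to $g = 1$ gives, for every $t \in \Z$,
\[ \sum_{\substack{r,s \ge 0\\ r+s = t}} \sum_{b \in B} E_{r+k-1}(fb)\, H_{s-k-1}(\chk b) = -\delta_{t,0}\tr(f). \]
In the $r = 0$ summand the clockwise bubble is $E_{k-1}(fb) = -\tr(fb)$ by \cref{eq:inf-grass1}, and $\sum_{b}\tr(fb)\,H_{t-k-1}(\chk b) = H_{t-k-1}(f)$ by \cref{eq:f-in-dual-basis}; isolating this summand yields, for $t \ge 1$,
\[ H_{t-k-1}(f) = \sum_{i=1}^{t}\sum_{b \in B} E_{i+k-1}(fb)\, H_{t-i-k-1}(\chk b), \]
while $t = 0$ recovers $H_{-k-1}(f) = \tr(f)$ (consistent with \cref{eq:central-bubble-reverse} at $t=1$). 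Dually, isolating instead the $s = 0$ summand, where $H_{-k-1}(\chk b) = \tr(\chk b)$ by \cref{eq:inf-grass2} and $\sum_{b}\tr(\chk b)\,E_{t+k-1}(fb) = E_{t+k-1}(f)$ by \cref{eq:f-in-basis} (applied with the element $1$: $\sum_b \tr(\chk b)\,b = 1$), gives $E_{t+k-1}(f) = -\sum_{i=1}^{t}\sum_b E_{t-i+k-1}(fb)\, H_{i-k-1}(\chk b)$ for $t \ge 1$.

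Next I would show the determinant satisfies the first recursion. Expand $\det\bigl(E_{i-j+k}(\chk b_{j-1}b_j)\bigr)_{i,j=1}^r$ along the first column as in \cref{eq:determinant-convention}. In the minor obtained by deleting row $i$ and the first column, every entry in a row $\ell < i$ and column $j$ is $E_{\ell-j+k}(\chk b_{j-1}b_j)$, which by \cref{eq:inf-grass1} vanishes for $j > \ell+1$ and equals $-\tr(\chk b_\ell b_{\ell+1})$ for $j = \ell+1$; hence this minor is block lower triangular, with an $(i-1)\times(i-1)$ lower triangular block with diagonal entries $-\tr(\chk b_\ell b_{\ell+1})$ ($1 \le \ell \le i-1$) and a complementary $(r-i)\times(r-i)$ block which, after reindexing, is exactly the defining matrix of $\mathcal D_{r-i}(\chk b_i)$. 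The sign $(-1)^{i+1}$ from the Laplace expansion cancels the sign $(-1)^{i-1}$ coming from the triangular block, and after summing over $b_1,\dotsc,b_{r-1}$ the off-diagonal trace factors telescope via \cref{eq:f-in-basis} (each application of $\sum_{b_\ell}\tr(\chk b_\ell b_{\ell+1})\,b_\ell = b_{\ell+1}$), converting $E_{i-1+k}(fb_1)$ into $E_{i-1+k}(fb_i)$. This gives $\mathcal D_r(f) = \sum_{i=1}^r \sum_{b\in B} E_{i+k-1}(fb)\,\mathcal D_{r-i}(\chk b)$, the same recursion satisfied by $H_{r-k-1}$, with the same value $\tr(f)$ at $r = 0$; since the recursion determines its solution uniquely, $H_{r-k-1}(f) = \mathcal D_r(f)$ for all $r \ge 0$, which is \cref{eq:ccbubble-in-terms-of-cbubble}. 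The second identity \cref{eq:cbubble-in-terms-of-ccbubble} is handled the same way, expanding $\det\bigl(H_{i-j-k}(\chk b_{j-1}b_j)\bigr)_{i,j=1}^r$ along the first column: now the triangular block produced by \cref{eq:inf-grass2} has diagonal entries $+\tr(\chk b_\ell b_{\ell+1})$, so the Laplace sign $(-1)^{i+1}$ is no longer cancelled; setting $\mathcal K_r := (-1)^{r+1}E_{r+k-1}$ one checks (using the $E$-recursion of the previous step) that $\mathcal K_r$ obeys the recursion coming from this expansion, the sign $(-1)^{r+1}$ in the statement being precisely the reflection of the sign discrepancy between \cref{eq:inf-grass1} and \cref{eq:inf-grass2}. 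Alternatively, \cref{eq:cbubble-in-terms-of-ccbubble} can be deduced from \cref{eq:ccbubble-in-terms-of-cbubble} by applying the isomorphism $\omega$ of \cref{lem:flip-symmetry} (which interchanges clockwise and counterclockwise bubbles and sends $k \mapsto -k$), with \cref{cor:omega-left-cupcap} and \cref{rel:f-left-cap} used to absorb the $\psi^{k}$-twist and the identification of the left dual basis of $F^{\op}$.

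The main obstacle is bookkeeping rather than conceptual: correctly pinning down the block-triangular shape of the minors (i.e.\ which entries are killed, and which become a trace, under \cref{eq:inf-grass1,eq:inf-grass2}), threading the implicit sums over $B$ through the telescoping contractions governed by \cref{eq:f-in-basis,eq:f-in-dual-basis,eq:f-Euler-commute}, and keeping Koszul signs straight in the super-determinant of \cref{eq:determinant-convention}. In the route for \cref{eq:cbubble-in-terms-of-ccbubble} via the recursion one also needs the token-exchange identity $\sum_b E_a(fb)\,H_c(\chk b) = \sum_b H_c(fb)\,E_a(\chk b)$, which follows by sliding $f$ across the Casimir with \cref{eq:f-Euler-commute,eq:double-dual}; tracking this (equivalently, tracking the $\psi$-twists under $\omega$) is the most delicate point, while everything else is a direct computation once the recursion is in hand.
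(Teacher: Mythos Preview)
Your proposal is correct and follows essentially the same approach as the paper. Both arguments expand the determinant along the first column via \cref{eq:determinant-convention}, use \cref{eq:inf-grass1} (resp.\ \cref{eq:inf-grass2}) to see that the minor $A_{i,1}$ is block lower triangular with scalar diagonal entries $\mp\tr(\chk b_\ell b_{\ell+1})$ in its upper-left block, collapse the trace factors via \cref{eq:dual-basis-def}/\cref{eq:f-in-basis}, and then invoke the infinite grassmannian relation \cref{eq:inf-grass3}; the only cosmetic difference is that the paper packages this as an induction on $r$ (identifying the lower-right block with the $(r-i)$-bubble via the inductive hypothesis and then summing), whereas you first isolate the recursion from \cref{eq:inf-grass3} and then match it against the determinant recursion.
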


\begin{proof}
  We prove \cref{eq:ccbubble-in-terms-of-cbubble} and the leave the proof of \cref{eq:cbubble-in-terms-of-ccbubble}, which is similar, to the reader.  The case $r=1$ follows immediately from \cref{eq:central-bubble-reverse}.  Assume that $r>1$ and that the result holds for $r-1$.  For $b_1,b_2,\dotsc,b_{r-1} \in B$, define the matrix
  \[
    A = \left( \cbubble{$\chk{b}_{j-1}b_j$}{\dotlabel{i-j+k}} \right)_{i,j=1}^r.
  \]
  (We leave the dependence on $b_1,b_2,\dotsc,b_{r-1} \in B$ implicit to simplify the notation.)  We have
  \[
    \det A
    = \sum_{t=1}^r (-1)^{t+1} \cbubble{$fb_1$}{\dotlabel{t+k-1}} \det A_{t,1}.
  \]
  If we consider $A_{t,1}$ as a block matrix with upper-left block of size $(t-1) \times (t-1)$ and lower-right block of size $(r-t) \times (r-t)$, we see that it is block lower triangular.  By \cref{eq:inf-grass1}, the upper-left block is lower triangular with diagonal entries
  \[
    -\tr(\chk{b}_1 b_2),\ -\tr(\chk{b}_2 b_3),\ \dotsc\ ,\ -\tr(\chk{b}_{t-1} b_t).
  \]
  On the other hand, the lower-right block is the matrix
  \[
    \left(
      \cbubble{$\chk{b}_{t+j-1} b_{t+j}$}{\dotlabel{i - j + k}}
    \right)_{i,j=1}^{r-t}.
  \]
  Thus, using the induction hypothesis and \cref{eq:dual-basis-def}, we have
  \begin{equation} \label{eq:det-expansion}
    \sum_{b_1,\dotsc,b_{r-1} \in B} \det A
    = \sum_{t=1}^r \
    \begin{tikzpicture}[anchorbase]
      \draw[->] (0,0.3) arc (90:-270:0.3);
      \bluedot{(0.3,0)} node[anchor=west,color=black] {$fb_1$};
      \redcircle{(-0.3,0)} node[anchor=east] {\dotlabel{t+k-1}};
      \draw[->] (0,-0.5) arc (90:450:0.3);
      \bluedot{(-0.3,-0.8)} node[anchor=east,color=black] {$\chk{b}_1$};
      \redcircle{(0.3,-0.8)} node[anchor=west,color=black] {\dotlabel{r-t-k-1}};
    \end{tikzpicture}
    \stackrel[\cref{eq:f-in-dual-basis}]{\substack{\cref{eq:inf-grass1} \\ \cref{eq:inf-grass3}}}{=}
    \ccbubble{$f$}{\dotlabel{r-k-1}}\ . \qedhere
  \end{equation}
  \details{
    We prove \cref{eq:cbubble-in-terms-of-ccbubble}.  The case $r=1$ follows immediately from \cref{eq:central-bubble-reverse}.  Assume that $r>1$ and that the result holds for $r-1$.  For $b_1,b_2,\dotsc,b_{r-1} \in B$, define the matrix
    \[
      A = \left( \ccbubble{$\chk{b}_{j-1}b_j$}{\dotlabel{i-j-k}} \right)_{i,j=1}^r.
    \]
    (We leave the dependence on $b_1,b_2,\dotsc,b_{r-1} \in B$ implicit to simplify the notation.)  We have
    \[
      \det A
      = \sum_{t=1}^r (-1)^{t+1} \ccbubble{$fb_1$}{\dotlabel{t-k-1}} \det A_{t,1}.
    \]
    If we consider $A_{t,1}$ as a block matrix with upper-left block of size $(t-1) \times (t-1)$ and lower-right block of size $(r-t) \times (r-t)$, we see that it is block lower triangular.  By \cref{eq:inf-grass2}, the upper-left block is lower triangular with diagonal entries
    \[
      \tr(\chk{b}_1 b_2),\ \tr(\chk{b}_2 b_3),\ \dotsc\ ,\ \tr(\chk{b}_{t-1} b_t).
    \]
    On the other hand, the lower-right block is the matrix
    \[
      \left(
        \ccbubble{$\chk{b}_{t+j-1} b_{t+j}$}{\dotlabel{i - j - k}}
      \right)_{i,j=1}^{r-t}.
    \]
    Thus, using the induction hypothesis and \cref{eq:dual-basis-def}, we have
    \[
      \sum_{b_1,\dotsc,b_{r-1} \in B} \det A
      = (-1)^{r+1} \sum_{t=1}^r\
      \begin{tikzpicture}[anchorbase]
        \draw[<-] (0,0.3) arc (90:-270:0.3);
        \bluedot{(-0.3,0)} node[anchor=east,color=black] {$fb_1$};
        \redcircle{(0.3,0)} node[anchor=west,color=black] {\dotlabel{t-k-1}};
        \draw[<-] (0,-0.5) arc (90:450:0.3);
        \bluedot{(0.3,-0.8)} node[anchor=west,color=black] {$\chk{b}_1$};
        \redcircle{(-0.3,-0.8)} node[anchor=east,color=black] {\dotlabel{r-t+k-1}};
      \end{tikzpicture}
      \stackrel[\cref{eq:f-in-dual-basis}]{\substack{\cref{eq:inf-grass2} \\ \cref{eq:inf-grass3}}}{=}
      (-1)^{r+1}\ \ccbubble{$f$}{\dotlabel{r-k-1}}\ . \qedhere
    \]
  }
\end{proof}

\begin{lem} \label{lem:square-left-cap-cup}
  For $f \in F$, we have
  \begin{equation} \label{eq:leftcap-expression}
    \begin{tikzpicture}[>=To,baseline={([yshift=1ex]current bounding box.center)}]
      \draw[<-] (0,0.2) -- (0,0) arc (180:360:.3) -- (0.6,0.2);
      \greensquare{(0.3,-0.3)} node[anchor=north,color=black] {\squarelabel{(r,f)}};
    \end{tikzpicture}
    = -\sum_{s \ge 0} \
    \begin{tikzpicture}[anchorbase]
      \draw[<-] (0,0.3) -- (0,0) arc (180:360:.3) -- (0.6,0.3);
      \draw[->] (0.6,-0.4) arc (90:450:.3);
      \redcircle{(0.6,0)} node[anchor=east,color=black] {$s$};
      \bluedot{(0,0)} node[anchor=east,color=black] {$a$};
      \bluedot{(0.9,-0.7)} node[anchor=west,color=black] {$\chk{a}f$};
      \redcircle{(0.3,-0.7)} node[anchor=east,color=black] {\dotlabel{-r-s-2}};
    \end{tikzpicture}
    \quad, \quad 0 \le r < k,
  \end{equation}
  and
  \begin{equation} \label{eq:leftcup-expression}
    \begin{tikzpicture}[>=To,baseline={([yshift=-2ex]current bounding box.center)}]
      \draw[<-] (0,-0.2) -- (0,0) arc (180:0:.3) -- (0.6,-0.2);
      \greensquare{(0.3,0.3)} node[anchor=south,color=black] {\squarelabel{(r,f)}};
    \end{tikzpicture}
    = -\sum_{s \ge 0} (-1)^{\bar a \bar f + \bar a + \bar f}\
    \begin{tikzpicture}[anchorbase]
      \draw[<-] (0,-0.3) -- (0,0) arc (180:0:.3) -- (0.6,-0.3);
      \draw[->] (0.6,1) arc (90:-270:.3);
      \redcircle{(0.6,0)} node[anchor=west,color=black] {$s$};
      \bluedot{(0,0)} node[anchor=east,color=black] {$a$};
      \bluedot{(0.9,0.7)} node[anchor=west,color=black] {$\chk{a}f$};
      \redcircle{(0.3,0.7)} node[anchor=east,color=black] {\dotlabel{-r-s-2}};
    \end{tikzpicture}
    \quad, \quad 0 \le r < -k.
  \end{equation}
\end{lem}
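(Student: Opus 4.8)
The strategy is to prove \cref{eq:leftcap-expression} (the case $k>0$) directly and then to deduce \cref{eq:leftcup-expression} (the case $k<0$) from it via the isomorphism $\omega$ of \cref{lem:flip-symmetry}: since $\omega$ carries a level $-k>0$ instance of \cref{eq:leftcap-expression} for the Frobenius superalgebra $F^\op$ over to \cref{eq:leftcup-expression}, this reduction works once one combines it with \cref{cor:omega-left-cupcap}, the diagrammatic description of $\omega$, and the $\omega$-compatibility of the negatively dotted bubbles noted just after \cref{eq:neg-cc-bubble}; in particular the factor $(-1)^{\bar a\bar f+\bar a+\bar f}$ in \cref{eq:leftcup-expression} will fall out of this translation. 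So assume $k>0$, fix $0\le r<k$ and $f\in F$, and write $\Theta$ for the right-hand side of \cref{eq:leftcap-expression}; this is a finite sum, since the negatively dotted bubble vanishes for $s>k-1-r$ by \cref{eq:neg-cc-bubble}.

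The plan is to pin $\Theta$ down by two conditions. Composing \cref{rel:up-down-doublecross} on the right with a morphism $\phi\colon\one\to\sQ_+\sQ_-$ gives $\phi = t'\circ t\circ\phi + \sum_{m=0}^{k-1}(\text{decorated left cup labelled }(m,b))\circ g_{m,b}\circ\phi$, with an implicit sum over $b\in B$, where $g_{m,b}\colon\sQ_+\sQ_-\to\one$ is the right cap carrying $m$ dots and the token $\chk{b}$ that figures in \cref{eq:inversion-relation-pos-l}. Hence, to prove \cref{eq:leftcap-expression} — that $\Theta$ is the decorated left cup labelled $(r,f)$ — it suffices to check that (a) $t\circ\Theta=0$ and (b) $g_{p,c}\circ\Theta=\delta_{r,p}\tr(\chk{c}f)$ for all $0\le p\le k-1$ and $c\in B$: granting these, the displayed identity forces $\Theta=\sum_b\tr(\chk{b}f)\,(\text{decorated left cup labelled }(r,b))$, which is the decorated left cup labelled $(r,f)$ by the linearity convention defining it.

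For (b): capping the decorated left cup inside $\Theta$ with $g_{p,c}$ produces a clockwise bubble; sliding its dots over the right cap $d$ using \cref{rel:dot-right-cupcap}, its tokens over $d$ using \cref{eq:f-right-cupcap-slide}, and merging tokens using \cref{rel:token-colide-up} (the residual powers of $\psi$ cancelling), this bubble becomes the clockwise bubble with $p+s$ dots and token $\chk{c}a$. Thus $g_{p,c}\circ\Theta$ equals $-\sum_{s\ge0}$, summed implicitly over $a\in B$, of (the clockwise bubble with $p+s$ dots and token $\chk{c}a$) times (the counterclockwise bubble with $-r-s-2$ dots and token $\chk{a}f$). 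Since a clockwise bubble with fewer than $k-1$ dots vanishes by \cref{eq:inf-grass1}, enlarging the summation so that $p+s$ runs over all of $\Z$ does not change the value, and then the infinite grassmannian relation \cref{eq:inf-grass3} evaluates it to $-\bigl(-\delta_{p,r}\tr(\chk{c}f)\bigr)=\delta_{r,p}\tr(\chk{c}f)$, as required.

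For (a): pull the token $a$ on the upward leg of $c'$ through the crossing with \cref{eq:tokenslide-rightcross1} and slide the $s$ dots on the downward leg of $c'$ through $t$ with \cref{rel:dotslide-right2}. Since $c'=-(\text{decorated left cup labelled }(k-1,1))$ by \cref{eq:left-cup-def}, and since $t$ composed with any decorated left cup vanishes — this being a component of the two-sided inverse relation \cref{eq:inversion-leftcup-def} — the straight-through term drops out, leaving only the correction terms of \cref{rel:dotslide-right2}, each of which factors through $\one$ via the right cap $d$ applied to a decorated $c'$, hence through a clockwise bubble. Collecting these, teleporting tokens across strands with \cref{eq:f-Euler-commute}, and reindexing, one obtains $t\circ\Theta$ as a finite sum of decorated copies of the right cup $c$, each multiplied by a bubble sum to which \cref{eq:inf-grass3} applies — after harmlessly extending the summation to negative dot-counts using \cref{eq:neg-clockwise-bubble} — and which evaluates to $0$ because the total degree in play is always strictly negative. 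Hence $t\circ\Theta=0$, completing the proof. I expect the genuine work to lie in the bookkeeping rather than in this conceptual frame (reduction to \cref{rel:up-down-doublecross}, plus two applications of \cref{eq:inf-grass3}): one must track the Koszul signs coming from odd elements, the powers of the Nakayama automorphism $\psi$ produced whenever a dot is slid past a token, and the reindexing underlying part (a).
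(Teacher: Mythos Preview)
Your proposal is correct and follows essentially the same strategy as the paper: both establish \cref{eq:leftcap-expression} by composing the right-hand side $\Theta$ with the identity decomposition \cref{rel:up-down-doublecross}, verifying that the $g_{p,c}$-components return $\delta_{p,r}\tr(\chk{c}f)$ via \cref{eq:inf-grass3} while the $t$-component vanishes, and then deduce \cref{eq:leftcup-expression} by applying $\omega$. The one place your route is more involved is in the vanishing check~(a): because you slide the token $a$ above $t$ before sliding the dots, the resulting clockwise correction bubbles carry only $\chk{b}$, forcing a teleportation step and a second appeal to \cref{eq:inf-grass3}; the paper instead leaves $a$ on $c'$, so each correction bubble already carries both $\chk{b}$ and $a$, and then simply observes that the bound $s\le k-1-r$ (coming from \cref{eq:inf-grass2} applied to the counterclockwise bubble in $\Theta$) forces each such clockwise bubble to have at most $k-2$ dots, whence it vanishes outright by \cref{eq:inf-grass1}. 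Your argument still goes through, but you could shorten part~(a) considerably by using this observation in place of the teleportation and the second infinite grassmannian relation.
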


\begin{proof}
  By linearity, it suffices to prove the result for $f = b \in B$.  Suppose $0 \le r < k$.  Using \cref{rel:f-left-cap,rel:f-left-cup,rel:dot-right-cupcap}, together with the fact that the sum over $a \in B$ is independent of the basis, we have that the right side of \cref{eq:leftcap-expression} is equal to
  \begin{multline} \label{eq:leftcup-proof-big-sum}
    - \sum_{s \ge 0}\
    \begin{tikzpicture}[anchorbase]
      \draw[<-] (0,0.6) -- (0,0) arc (180:360:.3) -- (0.6,0.6);
      \draw[->] (0.6,-0.4) arc (90:450:.3);
      \redcircle{(0.6,0.3)} node[anchor=east,color=black] {$s$};
      \bluedot{(0.6,0)} node[anchor=west,color=black] {$a$};
      \bluedot{(0.3,-0.7)} node[anchor=east,color=black] {$\chk{a}\psi^{-k}(b)$};
      \redcircle{(0.9,-0.7)} node[anchor=west,color=black] {\dotlabel{-r-s-2}};
    \end{tikzpicture}
    \\
    \stackrel[\cref{rel:f-left-cup}]{\substack{\cref{rel:up-down-doublecross} \\ \cref{rel:dot-right-cupcap}}}{=}
    -\sum_{s \ge 0}\
    \begin{tikzpicture}[anchorbase]
      \draw[<-] (0,0) .. controls (0,-0.3) and (0.5,-0.2) .. (0.5,-0.5) .. controls (0.5,-0.8) and (0,-0.7) .. (0,-1) -- (0,-1.2) arc (180:360:0.25) -- (0.5,-1) .. controls (0.5,-0.7) and (0,-0.8) .. (0,-0.5) .. controls (0,-0.2) and (0.5,-0.3) .. (0.5,0);
      \draw[->] (0.5,-1.6) arc (90:450:.3);
      \redcircle{(0.47,-0.9)} node[anchor=west,color=black] {$s$};
      \bluedot{(0.47,-1.3)} node[anchor=west,color=black] {$a$};
      \bluedot{(0.2,-1.9)} node[anchor=east,color=black] {$\chk{a}\psi^{-k}(b)$};
      \redcircle{(0.8,-1.9)} node[anchor=west,color=black] {\dotlabel{-r-s-2}};
    \end{tikzpicture}
    \ - \sum_{s \ge 0} \sum_{j=0}^{k-1}\
    \begin{tikzpicture}[anchorbase]
      \draw[<-] (0,0.2) -- (0,0) arc (180:360:0.3) -- (0.6,0.2);
      \draw[<-] (0.3,-0.5) arc (90:450:0.3);
      \draw[->] (0.6,-1.2) arc (90:450:0.3);
      \greensquare{(0.3,-0.3)} node[anchor=west,color=black] {\squarelabel{(j,e)}};
      \redcircle{(0,-0.8)} node[anchor=east,color=black] {\dotlabel{j+s}};
      \bluedot{(0.6,-0.8)} node[anchor=west,color=black] {$\psi^{-k}(\chk{e})a$};
      \bluedot{(0.3,-1.5)} node[anchor=east,color=black] {$\chk{a}\psi^{-k}(b)$};
      \redcircle{(0.9,-1.5)} node[anchor=west,color=black] {\dotlabel{-r-s-2}};
    \end{tikzpicture}\ .
  \end{multline}
  By \cref{eq:inf-grass1,eq:inf-grass2},
  \begin{equation} \label{eq:leftcup-proof-bubbles}
    \begin{tikzpicture}[anchorbase]
      \draw[<-] (0.3,-0.5) arc (90:450:0.3);
      \draw[->] (0.6,-1.2) arc (90:450:0.3);
      \redcircle{(0,-0.8)} node[anchor=east,color=black] {\dotlabel{j+s}};
      \bluedot{(0.6,-0.8)} node[anchor=west,color=black] {$\psi^{-k}(\chk{e})a$};
      \bluedot{(0.3,-1.5)} node[anchor=east,color=black] {$\chk{a}\psi^{-k}(b)$};
      \redcircle{(0.9,-1.5)} node[anchor=west,color=black] {\dotlabel{-r-s-2}};
    \end{tikzpicture}
  \end{equation}
  is equal to zero unless $k - 1 - j \le s \le k-1 - r$.  In particular, \cref{eq:leftcup-proof-bubbles} is zero unless $j \ge r$.  If $j \ge r$, then
  \begin{multline*}
    \sum_{s \ge 0}\
    \begin{tikzpicture}[anchorbase]
      \draw[<-] (0.3,-0.5) arc (90:450:0.3);
      \draw[->] (0.6,-1.2) arc (90:450:0.3);
      \redcircle{(0,-0.8)} node[anchor=east,color=black] {\dotlabel{j+s}};
      \bluedot{(0.6,-0.8)} node[anchor=west,color=black] {$\psi^{-k}(\chk{e})a$};
      \bluedot{(0.3,-1.5)} node[anchor=east,color=black] {$\chk{a}\psi^{-k}(b)$};
      \redcircle{(0.9,-1.5)} node[anchor=west,color=black] {\dotlabel{-r-s-2}};
    \end{tikzpicture}
    \stackrel{\cref{eq:inf-grass2}}{=} \sum_{\substack{u,s \in \Z \\ u + s = j-r-2}}\
    \begin{tikzpicture}[anchorbase]
      \draw[<-] (0.3,-0.5) arc (90:450:0.3);
      \draw[->] (0.6,-1.2) arc (90:450:0.3);
      \redcircle{(0,-0.8)} node[anchor=east,color=black] {$u$};
      \bluedot{(0.6,-0.8)} node[anchor=west,color=black] {$\psi^{-k}(\chk{e})a$};
      \bluedot{(0.3,-1.5)} node[anchor=east,color=black] {$\chk{a}\psi^{-k}(b)$};
      \redcircle{(0.9,-1.5)} node[anchor=west,color=black] {$s$};
    \end{tikzpicture}
    \\
    \stackrel{\cref{eq:inf-grass3}}{=} - \delta_{j,r} \tr \left( \psi^{-k}(\chk{e})\psi^{-k}(b) \right)
    = - \delta_{j,r} \delta_{b,e}.
  \end{multline*}

  Now consider the sum
  \begin{equation} \label{eq:leftcup-proof-term-vanish}
    \sum_{s \ge 0}\
    \begin{tikzpicture}[anchorbase]
      \draw[<-] (0,0) .. controls (0,-0.3) and (0.5,-0.2) .. (0.5,-0.5) .. controls (0.5,-0.8) and (0,-0.7) .. (0,-1) -- (0,-1.2) arc (180:360:0.25) -- (0.5,-1) .. controls (0.5,-0.7) and (0,-0.8) .. (0,-0.5) .. controls (0,-0.2) and (0.5,-0.3) .. (0.5,0);
      \draw[->] (0.5,-1.6) arc (90:450:.3);
      \redcircle{(0.47,-0.9)} node[anchor=west,color=black] {$s$};
      \bluedot{(0.47,-1.3)} node[anchor=west,color=black] {$a$};
      \bluedot{(0.2,-1.9)} node[anchor=east,color=black] {$\chk{a}f$};
      \redcircle{(0.8,-1.9)} node[anchor=west,color=black] {\dotlabel{-r-s-2}};
    \end{tikzpicture}\ .
  \end{equation}
  The terms with $s > k-1-r$ are zero by \cref{eq:inf-grass2}.  On the other hand, for $0 \le s \le k - 1 - r$, we have
  \[
    \begin{tikzpicture}[anchorbase]
      \draw[<-] (0,0) .. controls (0,-0.3) and (0.5,-0.2) .. (0.5,-0.5) .. controls (0.5,-0.8) and (0,-0.7) .. (0,-1) -- (0,-1.2) arc (180:360:0.25) -- (0.5,-1) .. controls (0.5,-0.7) and (0,-0.8) .. (0,-0.5) .. controls (0,-0.2) and (0.5,-0.3) .. (0.5,0);
      \redcircle{(0.47,-0.9)} node[anchor=west,color=black] {$s$};
      \bluedot{(0.47,-1.3)} node[anchor=west,color=black] {$a$};
    \end{tikzpicture}
    \ \stackrel{\cref{rel:dotslide-right2}}{=}\
    \begin{tikzpicture}[anchorbase]
      \draw[<-] (0,0) .. controls (0,-0.3) and (0.5,-0.2) .. (0.5,-0.5) -- (0.5,-0.7) .. controls (0.5,-1) and (0,-0.9) .. (0,-1.2) arc (180:360:0.25) .. controls (0.5,-0.9) and (0,-1) .. (0,-0.7) -- (0,-0.5) .. controls (0,-0.2) and (0.5,-0.3) .. (0.5,0);
      \redcircle{(0,-0.5)} node[anchor=south east,color=black] {$s$};
      \bluedot{(0.47,-1.3)} node[anchor=west,color=black] {$a$};
    \end{tikzpicture}
    \ - \sum_{t=0}^{s-1}
    \begin{tikzpicture}[anchorbase]
      \draw[<-] (0,1.8) .. controls (0.25,1.55) and (0.5,1.55) .. (0.5,1.3) arc(360:180:0.25) .. controls (0,1.55) and (0.25,1.55) .. (0.5,1.8);
      \draw[<-] (0.25,0.8) arc(90:180:0.25) -- (0,.25) arc(180:360:0.25) -- (0.5,0.55) arc(0:90:0.25);
      \redcircle{(0,1.3)} node[anchor=east,color=black] {$t$};
      \redcircle{(0.5,0.55)} node[anchor=west,color=black] {\dotlabel{s-t-1}};
      \bluedot{(0.5,0.25)} node[anchor=west,color=black] {$a$};
      \bluedot{(0.5,1.3)} node[anchor=west,color=black] {$b$};
      \bluedot{(0,0.55)} node[anchor=east,color=black] {$\chk{b}$};
    \end{tikzpicture}
    \stackrel[\cref{eq:inf-grass1}]{\cref{rel:leftcurl-l-positive}}{=} 0
  \]
  Thus, the sum \cref{eq:leftcup-proof-term-vanish} is equal to zero.  Combined with the above, this proves \cref{eq:leftcap-expression}.

  The relation \cref{eq:leftcup-expression} follows from \cref{eq:leftcap-expression} by applying $\omega$.
  \details{
    By \cref{eq:omega-greenbox}, applying $\omega$ to the left side of \cref{eq:leftcap-expression} yields the inverted diagram (i.e.\ flipped in the horizontal axis) times $(-1)^{\bar f}$.  Applying $\omega$ to the right side of \cref{eq:leftcap-expression} yields the inverted diagram times $(-1)^{\bar a (\bar f + \bar a)} = (-1)^{\bar a \bar f + \bar a}$.
  }
\end{proof}

Using \cref{lem:square-left-cap-cup,eq:inf-grass1,eq:inf-grass2}, relations \cref{rel:up-down-doublecross,rel:down-up-doublecross} become
\begin{equation} \label{rel:up-down-doublecross-new}

    \ .
  \]
  Then \cref{rel:pitchfork-up-leftcap} follows from \cref{rel:zigzag-leftdown,rel:zigzag-leftup}.  Similarly, when $k \le 0$, \cref{rel:pitchfork-up-leftcup} follows from attaching left cups to the bottom left and bottom right strands of \cref{eq:pitchfork-up-leftcap-neg-l} and using \cref{rel:zigzag-leftdown,rel:zigzag-leftup}.
\end{proof}

\begin{lem}
  The bubble slide relations \cref{rel:clockwise-bubble-slide,rel:counterclockwise-bubble-slide} hold.
\end{lem}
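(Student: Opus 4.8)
The plan is to establish \cref{rel:clockwise-bubble-slide} by a direct diagrammatic computation and then deduce \cref{rel:counterclockwise-bubble-slide} by an entirely parallel argument (alternatively, once the two-sided relation is in hand one checks that \cref{rel:counterclockwise-bubble-slide} is its image under the flip functor $\omega$ of \cref{lem:flip-symmetry}, using \cref{cor:omega-left-cupcap} and the fact that $\omega$ exchanges clockwise and counterclockwise bubbles while sending $F$ to $F^\op$ and $k$ to $-k$, so that $\psi$ is sent to $\psi^{-1}$). We may moreover assume $r\ge k$: if $r\le k-1$ then the clockwise bubble on the left is the scalar $-\delta_{r,k-1}\tr(f)$ by \cref{eq:inf-grass1}, and every bubble in the correction term on the right has dot-label $r-t-2\le k-3<k-1$ and so vanishes, again by \cref{eq:inf-grass1}; both sides are then $-\delta_{r,k-1}\tr(f)$ times the identity strand.

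Fix such an $r$. The first step is to realize the clockwise bubble carrying $r$ dots and the token $\beta_f$ as a closed-up strand, i.e.\ as a cup ($c$ or $c'$) followed by $r$ dots and the token on an upward strand followed by a cap ($d$ or $d'$), and to tensor on the right by the identity strand $\uparrow$. Using the pitchfork relations of \cref{lem:crossings-over-caps-cups} I would slide this identity strand past the left cup and left cap that close the bubble, so that it threads through the loop of the bubble. Pulling the threaded strand back out to the \emph{left} of the bubble is then obstructed precisely by the ``wrong-way'' double-crossing relations \cref{rel:down-up-doublecross-new} and \cref{rel:up-down-doublecross-new}: resolving the double crossing replaces the threaded configuration by $\uparrow$ sitting to the left of the now-unlinked bubble, together with a sum of correction terms, each of which is a dotted curl on the upward strand attached to a dotted/tokened bubble.

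What remains is to massage these corrections into the precise shape on the right of \cref{rel:clockwise-bubble-slide}. Here the relevant tools are: the dotted-curl relations \cref{rel:left-dotted-curl,rel:right-dotted-curl}, which turn each curl back into a product of an upward strand (carrying dots and a token) with a clockwise bubble; the infinite grassmannian relations \cref{eq:inf-grass1,eq:inf-grass2,eq:inf-grass3}, together with \cref{eq:f-in-basis,eq:f-in-dual-basis} and the teleportation identity \cref{eq:f-Euler-commute}, which collapse the products of two bubbles and regroup the resulting double sum into $\sum_{t\ge 0}\sum_{s=0}^t$; and the slide relations for dots and tokens over left cups and caps, \cref{rel:dot-left-cap-slide,rel:dot-left-cup-slide,rel:f-left-cap,rel:f-left-cup}, each application of which shifts an argument by $\psi^{\pm1}$ --- this is exactly the mechanism that produces the twisted token $b\,\psi^{-s}(a)\,\psi^{-t}(\chk{b})$ on the residual strand. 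The signs $(-1)^{\bar a\bar b}$ arise from reordering basis elements past one another, as encoded by \cref{eq:double-dual}.

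The step I expect to be the genuine obstacle is this last one: the bookkeeping of the nested sums and, above all, pinning down the exact powers of $\psi$. The $t$ dots that migrate onto the residual strand have to be commuted past the cup/cap data of the bubble one at a time --- using the iterated form of \cref{rel:dotslide1,rel:dotslide2} to distribute $x^t$ across a crossing --- and each passage over a left cup or cap contributes a factor of $\psi^{-1}$, so the emergence of the telescoping sum $\sum_{s=0}^t\psi^{-s}(a)$ and of the overall $\psi^{-t}(\chk{b})$ is where the argument is most delicate; the super signs must be tracked in parallel. I do not expect any conceptual difficulty beyond this.
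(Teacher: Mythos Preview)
Your plan is essentially the paper's: thread the upward strand through the clockwise bubble by replacing the relevant $\downarrow\uparrow$ (resp.\ $\uparrow\downarrow$) identity with the double crossing plus corrections via \cref{rel:down-up-doublecross-new,rel:up-down-doublecross-new}, use the pitchfork relations \cref{rel:pitchforks-right,rel:pitchfork-up-leftcup} to carry the strand to the other side, then simplify the correction terms with \cref{rel:right-dotted-curl}, teleportation \cref{eq:f-Euler-commute}, and reindexing of dual-basis sums; the paper likewise deduces \cref{rel:counterclockwise-bubble-slide} from \cref{rel:clockwise-bubble-slide} via $\omega$ (followed by closing with right cups/caps).

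One point in your sketch is misidentified and is exactly the step you flagged as delicate. The powers of $\psi$ in $b\,\psi^{-s}(a)\,\psi^{-t}(\chk b)$ do \emph{not} come from sliding tokens or dots over left cups and caps: \cref{rel:f-left-cap,rel:f-left-cup} shift by $\psi^{\pm k}$, not $\psi^{\pm 1}$, and \cref{rel:dot-left-cap-slide,rel:dot-left-cup-slide} carry bubble corrections rather than a clean twist. In the paper the $\psi$-shifts arise from \cref{rel:dot-token-up-slide} (commuting dots past tokens on an upward strand) together with reindexing sums over $B$ by $\psi^k(a)$, $\psi^{r+s-t}(\chk b)$, etc.; the change of summation variable $t=u+r+s$ (and later $t=r+s-1-u$) is what converts the raw correction terms into the nested form $\sum_{t\ge 0}\sum_{s=0}^t$. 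With that mechanism in place your bookkeeping goes through. (The paper also splits into $k<0$ and $k\ge0$ rather than $r<k$ versus $r\ge k$, but your reduction via \cref{eq:inf-grass1} is valid and harmless.)
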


\begin{proof}
  It suffices to prove \cref{rel:clockwise-bubble-slide}, since then \cref{rel:counterclockwise-bubble-slide} follows by applying $\omega$, placing upward strands to the left and right of all diagrams, connecting the tops of the two leftmost straight strands with a right cap, connecting the bottoms of the rightmost strands with a right cup, and using \cref{rel:dot-right-cupcap,eq:f-right-cupcap-slide,rel:right-adjunction-up}.

  First suppose $k < 0$.  We have
  \begin{align*}
    \begin{tikzpicture}[anchorbase]
      \draw[<-] (0,0.3) arc(90:450:0.3);
      \draw[->] (0.8,-1) -- (0.8,1);
      \redcircle{(0.3,0)} node[anchor=west,color=black] {$r$};
      \bluedot{(-0.3,0)} node[anchor=east,color=black] {$f$};
    \end{tikzpicture}
    \ &\stackrel[\substack{\cref{rel:f-left-cup} \\ \cref{rel:right-adjunction-up} \\ \cref{rel:zigzag-leftup}}]{\substack{\cref{rel:dot-right-cupcap} \\ \cref{rel:down-up-doublecross-new}}}{=}\
    \begin{tikzpicture}[anchorbase]
      \draw[<-] (0,0.5) arc(90:450:0.5);
      \draw[->] (0.8,-1) .. controls (0,-0.3) and (0,0.3) .. (0.8,1);
      \redcircle{(-0.48,0.15)} node[anchor=east,color=black] {$r$};
      \bluedot{(-0.48,-0.15)} node[anchor=east,color=black] {$f$};
    \end{tikzpicture}
    \ - \sum_{u,s \ge 0} (-1)^{\bar a \bar b + \bar a \bar f + \bar a + \bar b}\
    \begin{tikzpicture}[anchorbase]
      \draw[->] (0,-1) -- (0,1);
      \draw[->] (2,0.3) arc(90:-270:0.3);
      \redcircle{(0,0.7)} node[anchor=west,color=black] {$u$};
      \bluedot{(0,0.4)} node[anchor=east,color=black] {$\chk{b}$};
      \bluedot{(0,-0.4)} node[anchor=east,color=black] {$f\psi^k(a)$};
      \redcircle{(0,-0.7)} node[anchor=west,color=black] {$s$};
      \redcircle{(1.7,0)} node[anchor=east,color=black] {\dotlabel{-u-s-2}};
      \bluedot{(2.3,0)} node[anchor=west,color=black] {$\chk{a}b$};
      \redcircle{(0,0)} node[anchor=east,color=black] {$r$};
    \end{tikzpicture}
    \\
    &\stackrel[\cref{rel:dot-token-up-slide}]{\substack{\cref{rel:pitchforks-right} \\ \cref{rel:pitchfork-up-leftcup}}}{=}\
    \begin{tikzpicture}[anchorbase]
      \draw[<-] (0,0.5) arc(90:450:0.5);
      \draw[->] (-0.8,-1) .. controls (0,-0.3) and (0,0.3) .. (-0.8,1);
      \redcircle{(-0.48,0.15)} node[anchor=east,color=black] {$r$};
      \bluedot{(-0.48,-0.15)} node[anchor=east,color=black] {$f$};
    \end{tikzpicture}
    \ - \sum_{u,s \ge 0} (-1)^{\bar b + \bar f \bar b}\
    \begin{tikzpicture}[anchorbase]
      \draw[->] (0,-1) -- (0,1);
      \draw[->] (1.8,-0.2) arc(90:-270:0.3);
      \redcircle{(0,0.1)} node[anchor=west,color=black] {$u+r+s$};
      \bluedot{(0,0.6)} node[anchor=west,color=black] {$\psi^{-u}(\chk{b}) \psi^{-u-r}(f) \psi^{k-u-r}(a)$};
      \redcircle{(1.5,-0.5)} node[anchor=east,color=black] {\dotlabel{-u-s-2}};
      \bluedot{(2.1,-0.5)} node[anchor=west,color=black] {$\chk{a}b$};
    \end{tikzpicture}
    \ .
  \end{align*}
  Introducing $t=u+r+s$ and replacing the sum over $u$ with a sum over $t$, the above double sum becomes
  \[
    \sum_{s \ge 0} \sum_{t \ge r+s} (-1)^{\bar b + \bar f \bar b + \bar a \bar b}\
    \begin{tikzpicture}[anchorbase]
      \draw[->] (0,-1) -- (0,1);
      \draw[->] (1.6,-0.2) arc(90:-270:0.3);
      \redcircle{(0,0.1)} node[anchor=west,color=black] {$t$};
      \bluedot{(0,0.6)} node[anchor=west,color=black] {$\psi^{r+s-t}(\chk{b}) \psi^{s-t}(f) \psi^{k+s-t}(a)$};
      \redcircle{(1.3,-0.5)} node[anchor=east,color=black] {\dotlabel{r-t-2}};
      \bluedot{(1.87,-0.35)} node[anchor=south west,color=black] {$b$};
      \bluedot{(1.87,-0.65)} node[anchor=north west,color=black] {$\chk{a}$};
    \end{tikzpicture}
    \ .
  \]
  Sliding the token labelled $b$ and the $r-t-2$ dots over the right cap (and past each other) and the token labelled $\chk{a}$ over the left cup, changing the sum over $a \in B$ to a sum over $\psi^k(a)$, and the sum over $b \in B$ to a sum over $\psi^{r+s-t}(\chk{b})$ (with left dual basis elements $(-1)^{\bar b} \psi^{r+s-t-1}(b)$ by \cref{eq:double-dual}) we obtain
  \begin{equation} \label{eq:bubble-slide-proof-step}
    \sum_{s \ge 0} \sum_{t \ge r+s} (-1)^{\bar f \bar b + \bar a \bar b}\
    \begin{tikzpicture}[anchorbase]
      \draw[->] (0,-1) -- (0,1);
      \draw[->] (3.1,-0.2) arc(90:-270:0.3);
      \redcircle{(0,0.1)} node[anchor=west,color=black] {$t$};
      \bluedot{(0,0.6)} node[anchor=west,color=black] {$b \psi^{s-t}(fa)$};
      \redcircle{(3.4,-0.5)} node[anchor=west,color=black] {\dotlabel{r-t-2}};
      \bluedot{(2.8,-0.5)} node[anchor=east,color=black] {$\psi^{-s-1}(\chk{b}) \chk{a}$};
    \end{tikzpicture}
    \ .
  \end{equation}
  Now
  \begin{multline*}
    (-1)^{\bar a \bar b} \psi^{-s-1}(\chk{b}) \chk{a} \otimes a
    \stackrel{\cref{eq:f-in-dual-basis}}{=} (-1)^{\bar b + \bar b \bar e} \tr \left( \psi^{-s-1}(\chk{b}) \chk{a} e \right) \chk{e} \otimes a
    \\
    \stackrel{\cref{eq:Nakayama-def}}{=} (-1)^{\bar b \bar e} \chk{e} \otimes \tr \left( \chk{a} e \psi^{-s}(\chk{b}) \right) a
    \stackrel{\cref{eq:f-in-basis}}{=} (-1)^{\bar b \bar e} \chk{e} \otimes e \psi^{-s}(\chk{b}).
  \end{multline*}
  Using this and \cref{eq:f-Euler-commute},
  \details{
    We have
    \begin{multline*}
      (-1)^{\bar f \bar b + \bar b \bar a} fa \otimes \chk{a}
      \stackrel{\cref{eq:f-in-basis}}{=} (-1)^{\bar f \bar b + \bar b \bar a} \tr(\chk{e}fa) e \otimes \chk{a}
      = (-1)^{\bar b \bar e} e \otimes \tr(\chk{e}fa) \chk{a}
      \\
      \stackrel{\cref{eq:f-in-dual-basis}}{=} (-1)^{\bar b \bar e} e \otimes \chk{e}f
      = (-1)^{\bar a \bar b} a \otimes \chk{a}f.
    \end{multline*}
  }
  the expression \cref{eq:bubble-slide-proof-step} becomes
  \[
    \sum_{s \ge 0} \sum_{t \ge r+s} (-1)^{\bar a \bar b}\

    \]
    Now we introduce $r=u-1-t$ to replace the sum over $u$ with a sum over $r$, in order to obtain \cref{rel:braid-alternating} in the case $k \le -2$.
  }
\end{proof}

\begin{lem}
  The rotation relations \cref{rel:token-rotation,rel:dot-rotation,rel:crossing-rotation} hold.
\end{lem}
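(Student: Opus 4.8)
The first equality in each of \cref{rel:token-rotation,rel:dot-rotation,rel:crossing-rotation} is the \emph{definition} of the decorated, respectively crossed, downward strand as the corresponding rightward rotation of an upward-strand generator, so the content of the lemma is the second equality in each case, which compares this rightward rotation with the leftward rotation built out of the left cup $c'$ and left cap $d'$. The plan is to compute the leftward rotation of a generator by transferring its decoration (token, dot, or crossing strand) across an adjacent left cup or left cap and then collapsing the resulting $c'$/$d'$ ``snake'' by means of the left adjunction relations \cref{rel:zigzag-leftdown,rel:zigzag-leftup} of \cref{lem:left-zigzag}. The cost of each transfer is recorded by relations already in hand: \cref{rel:f-left-cap,rel:f-left-cup} for tokens, \cref{rel:dot-left-cap-slide,rel:dot-left-cup-slide} for dots, and the pitchfork relations \cref{rel:pitchfork-up-leftcap,rel:pitchfork-down-leftcap,rel:pitchfork-up-leftcup,rel:pitchfork-down-leftcup} of \cref{lem:crossings-over-caps-cups} together with \cref{rel:pitchforks-right} for crossings.

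Concretely, for \cref{rel:token-rotation} I would write the leftward rotation of $\beta_f$ as $\beta_f$ sitting on the middle strand between a left cap and a left cup, slide that token across the adjacent left cap using \cref{rel:f-left-cap} so that it moves onto the other leg with its label twisted by a power of $\psi$, and then straighten the remaining bare $c'$/$d'$ snake with \cref{rel:zigzag-leftdown} (or \cref{rel:zigzag-leftup}). This identifies the leftward rotation of $\beta_f$ with $\beta'_{\psi^{-k}(f)}$, which is the asserted relation after relabelling $f$. For \cref{rel:dot-rotation} the argument is identical, except that sliding the dot across the left cap now incurs the correction term of \cref{rel:dot-left-cap-slide}, namely a central $k$-dotted bubble decorated by $\psi^{-1}(b)-b$ (summed over $b\in B$) together with a token $\chk{b}$ on the strand; after collapsing the snake this correction is reproduced, and since the bubble is strictly central (by the bubble-slide relations \cref{rel:clockwise-bubble-slide,rel:counterclockwise-bubble-slide}, as noted after \cref{eq:central-bubbles}) its location in the diagram is immaterial, yielding exactly the subtracted term of \cref{rel:dot-rotation}. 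In both cases the cup version follows from the cap version by applying the involution $\omega$ of \cref{lem:flip-symmetry}, using \cref{cor:omega-left-cupcap} and the fact that the Nakayama automorphism of $F^\op$ is $\psi^{-1}$.

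For \cref{rel:crossing-rotation} I would show that the middle and right diagrams both equal the right mate $s'$ of \cref{eq:right-mates}. Starting from the middle diagram---the upward crossing $s$ with both of its legs bent around through cups and caps---one uses the pitchfork relations of \cref{lem:crossings-over-caps-cups} and \cref{rel:pitchforks-right} to slide the crossing past each cup and cap, pulling the cups and caps to the outside, and then the adjunction relations \cref{rel:zigzag-leftdown,rel:zigzag-leftup,rel:right-adjunction-up,rel:right-adjunction-down} straighten the resulting zigzags, leaving the bare down--down crossing $s'$. Repeating the computation with the cups and caps bent to the other side (equivalently, applying $\omega$ and invoking \cref{lem:flip-symmetry,cor:omega-left-cupcap}) identifies the right diagram with $s'$ as well, so all three diagrams agree.

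The main obstacle is the dot case: one must verify that the bubble correction produced by \cref{rel:dot-left-cap-slide} survives the subsequent straightening without picking up any further $\psi$-twist or sign, which is precisely where the strict centrality of the bubble and the exact form of \cref{rel:dot-left-cap-slide} are needed. The crossing case, by contrast, goes through with no correction terms at all---this is exactly why \cref{lem:crossings-over-caps-cups} had to be established first.
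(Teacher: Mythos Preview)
Your overall strategy is correct and matches the paper's: the first equality in each of \cref{rel:token-rotation,rel:dot-rotation,rel:crossing-rotation} is a definition (the right mate), and the second is obtained by sliding the relevant decoration off the middle strand of the left-mate zigzag and then collapsing with \cref{rel:zigzag-leftdown,rel:zigzag-leftup}. For the token and crossing cases your argument is essentially the paper's proof.

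There is, however, a genuine subtlety in your treatment of \cref{rel:dot-rotation}. You propose sliding the dot through the \emph{left cap} via \cref{rel:dot-left-cap-slide}, whereas the paper slides it through the \emph{left cup} via \cref{rel:dot-left-cup-slide} (together with \cref{eq:central-bubble-reverse}). The difference matters: in the correction term of \cref{rel:dot-left-cap-slide} the token $\chk{b}$ sits on the \emph{up} side of the cap, which in the left-mate configuration is the middle strand, not the final down strand. Collapsing the snake then forces you to move $\chk{b}$ through a left cap or left cup, and by \cref{rel:f-left-cap,rel:f-left-cup} this introduces a $\psi^{-k}$ twist, so you end up with $\beta'_{\psi^{-k}(\chk{b})}$ rather than $\beta'_{\chk{b}}$. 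Your claim that ``the correction is reproduced'' glosses over this. The approach can still be completed---the extra $\psi^{-k}$ on the token is exactly compensated by the fact that the bubble in \cref{rel:dot-left-cap-slide} carries its token and dots on the opposite sides from the bubble in \cref{rel:dot-rotation}, and commuting them via \cref{rel:dot-token-up-slide,rel:dot-right-cupcap,eq:f-right-cupcap-slide} produces the matching $\psi^{-k}$, after which a change of summation variable finishes the job---but this is more than ``strict centrality of the bubble.'' By contrast, in \cref{rel:dot-left-cup-slide} the token $\chk{b}$ already sits on the \emph{down} side of the cup, i.e.\ on the outer down strand, so the paper's route avoids this twist entirely and only needs \cref{eq:central-bubble-reverse} to swap the bubble orientation.

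One minor point: your remark that ``the cup version follows from the cap version by applying $\omega$'' is misplaced for \cref{rel:dot-rotation}---there is only one nontrivial equality to prove, not a cap version and a cup version.
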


\begin{proof}
  The first equality of \cref{rel:token-rotation} is the definition of the token on a downward strand (see \cref{eq:right-mates}), while the second equality follows from \cref{rel:f-left-cap,rel:zigzag-leftdown}.  Similarly, the first of equality of \cref{rel:dot-rotation} is a definition, while the second equality follows from \cref{rel:dot-left-cup-slide,rel:zigzag-leftdown,eq:central-bubble-reverse}.  Finally, the first equation of \cref{rel:crossing-rotation} follows from the definitions \cref{eq:t-def,eq:right-mates} of $t$ and $s'$, while the second equation follows from \cref{rel:pitchfork-up-leftcap,rel:pitchfork-down-leftcap,rel:zigzag-leftdown}.
\end{proof}

It follows from \cref{rel:crossing-rotation,eq:right-mates} that
\[
  s' =
  \begin{tikzpicture}[anchorbase]
    \draw [<-](0,0) -- (0.6,0.6);
    \draw [<-](0.6,0) -- (0,0.6);
  \end{tikzpicture}
  \ :=\
  \begin{tikzpicture}[anchorbase]
    \draw[<-] (0.3,0) -- (-0.3,1);
    \draw[->] (-0.75,1) -- (-0.75,0.5) .. controls (-0.75,0.2) and (-0.5,0) .. (0,0.5) .. controls (0.5,1) and (0.75,0.8) .. (0.75,0.5) -- (0.75,0);
  \end{tikzpicture}
  \ = \
  \begin{tikzpicture}[anchorbase]
    \draw[<-] (-0.3,0) -- (0.3,1);
    \draw[->] (0.75,1) -- (0.75,0.5) .. controls (0.75,0.2) and (0.5,0) .. (0,0.5) .. controls (-0.5,1) and (-0.75,0.8) .. (-0.75,0.5) -- (-0.75,0);
  \end{tikzpicture}
  \ .
\]

\section{Proofs of theorems \label{sec:proofs}}

\subsection{Proof of \cref{theo:alternate-presentation}}

We first prove the existence of $c'$ and $d'$ satisfying \cref{theo-eq:doublecross-up-down,theo-eq:doublecross-down-up,theo-eq:right-curl,theo-eq:clockwise-circ,theo-eq:left-curl,theo-eq:counterclockwise-circ}. Define $\Heis_{F,k}$ as in \cref{def:H}.  Then define $t'$ and the decorated left cups and caps by \cref{eq:inversion-leftcup-def,eq:inversion-leftcap-def}, define $c'$ and $d'$ by \cref{eq:left-cup-def,eq:left-cap-def}, and define the negatively dotted bubbles by \cref{eq:neg-clockwise-bubble,eq:neg-cc-bubble}.  It follows from \cref{rel:pitchfork-up-leftcap,rel:zigzag-leftdown} that this definition of $t'$ agrees with \cref{eq:t-def-alt}.  Similarly, it follows from \cref{lem:bubble-determinant-identities} that this definition of negatively dotted bubbles agrees with \cref{theo-eq:neg-ccbubble,theo-eq:neg-cbubble}.  Then the relations \cref{theo-eq:doublecross-up-down,theo-eq:doublecross-down-up,theo-eq:right-curl,theo-eq:clockwise-circ,theo-eq:left-curl,theo-eq:counterclockwise-circ} follow from \cref{rel:up-down-doublecross-new,rel:down-up-doublecross-new,eq:inf-grass1,eq:inf-grass2,rel:left-dotted-curl,rel:right-dotted-curl}.

Now let $\mathcal{C}$ be the strict $\kk$-linear graded monoidal supercategory generated by the objects $\sQ_+$, $\sQ_-$, and morphisms $s,x,c,d,c',d'$, and $\beta_f$, $f \in F$, subject to the relations \cref{rel:token-homom,rel:braid-up,rel:doublecross-up,rel:dot-token-up-slide,rel:tokenslide-up-right,rel:dotslide1,rel:right-adjunction-up,rel:right-adjunction-down,theo-eq:doublecross-up-down,theo-eq:doublecross-down-up,theo-eq:right-curl,theo-eq:clockwise-circ,theo-eq:left-curl,theo-eq:counterclockwise-circ}.
Since all of these relations hold in $\Heis_{F,k}$, there is a strict $\kk$-linear monoidal functor $\mathbold{A} \colon \mathcal{C} \to \Heis_{F,k}$ taking the objects $\sQ_\pm$ and morphisms $x,s,c,d,c',d', \beta_f$, $f \in F$, in $\mathcal{C}$ to the elements with the same names in $\Heis_{F,k}$.

We claim that there is a strict $\kk$-linear monoidal functor $\mathbold{B} \colon \Heis_{F,k} \to \mathcal{C}$, sending the objects $\sQ_\pm$ and morphisms $x,s,c,d,c',d', \beta_f$, $f \in F$, in $\mathcal{C}$ to the elements with the same names in $\mathcal{C}$.  After showing the existence of $\mathbold{B}$, we will prove that it is a two-sided inverse of $\mathbold{A}$.  For the existence of $\mathbold{B}$, it suffices to verify that the defining relations of $\Heis_{F,k}$ hold in $\mathcal{C}$.  We will do this in the case $k \ge 0$, since the case $k < 0$ is similar.

In $\mathcal{C}$, we define new morphisms
\[
  \begin{tikzpicture}[>=To,baseline={([yshift=1ex]current bounding box.center)}]
    \draw[<-] (0,0.2) -- (0,0) arc (180:360:.3) -- (0.6,0.2);
    \greensquare{(0.3,-0.3)} node[anchor=north,color=black] {\squarelabel{(r,b)}};
  \end{tikzpicture}
  = - \sum_{s \ge 0} \
  \begin{tikzpicture}[anchorbase]
    \draw[<-] (0,0.3) -- (0,0) arc (180:360:.3) -- (0.6,0.3);
    \draw[->] (0.6,-0.4) arc (90:450:.3);
    \redcircle{(0.6,0)} node[anchor=east,color=black] {$s$};
    \bluedot{(0,0)} node[anchor=east,color=black] {$a$};
    \bluedot{(0.9,-0.7)} node[anchor=west,color=black] {$\chk{a}b$};
    \redcircle{(0.3,-0.7)} node[anchor=east,color=black] {\dotlabel{-r-s-2}};
  \end{tikzpicture}
  \quad, \quad 0 \le r < k,\ b \in B.
\]
We claim that the $1 \times (1+k \dim F)$ matrix
\[
  \left[
    \begin{tikzpicture}[anchorbase]
      \draw [<-](0,0) -- (0.6,0.6);
      \draw [->](0.6,0) -- (0,0.6);
    \end{tikzpicture}
    \quad
    \begin{tikzpicture}[>=To,baseline={([yshift=1ex]current bounding box.center)}]
      \draw[<-] (0,0.2) -- (0,0) arc (180:360:.3) -- (0.6,0.2);
      \greensquare{(0.3,-0.3)} node[anchor=north,color=black] {\squarelabel{(r,b)}};
    \end{tikzpicture},\
    0 \le r \le k-1,\ b \in B
  \right]
\]
is a two-sided inverse of \cref{eq:inversion-relation-pos-l}.  Composing in one order yields the morphism
\[
  \begin{tikzpicture}[anchorbase]
    \draw[->] (0,0) .. controls (0.5,0.5) .. (0,1);
    \draw[<-] (0.5,0) .. controls (0,0.5) .. (0.5,1);
  \end{tikzpicture}
  \ - \sum_{r,s \ge 0} \
  \begin{tikzpicture}[anchorbase]
    \draw[->] (0,0) -- (0,0.7) arc (180:0:0.3) -- (0.6,0);
    \draw[<-] (0,2.1) -- (0,1.8) arc (180:360:0.3) -- (0.6,2.1);
    \redcircle{(0,0.6)} node[anchor=west,color=black] {$r$};
    \bluedot{(0,0.2)} node[anchor=west,color=black] {$\chk{b}$};
    \redcircle{(0.6,1.8)} node[anchor=west,color=black] {$s$};
    \bluedot{(0,1.8)} node[anchor=east,color=black] {$a$};
    \draw[->] (1,1.55) arc(90:450:0.3);
    \bluedot{(1.3,1.25)} node[anchor=west,color=black] {$\chk{a} b$};
    \redcircle{(0.7,1.25)} node[anchor=east,color=black] {\dotlabel{-r-s-2}};
  \end{tikzpicture}
  \ ,
\]
which is the identity by \cref{theo-eq:doublecross-up-down}.  Composing in the other order, we obtain a $(1 + k \dim F) \times (1 + k \dim F)$ matrix.  Its $(1,1)$-entry is the identity by \eqref{theo-eq:doublecross-down-up}.  This is sufficient when $k=0$.  However, when $k > 0$, we also need to verify the following relations for $b \in B$, $0 \le r,s < k$:

\noindent\begin{minipage}{0.33\linewidth}
  \begin{equation} \label{eq:inverse-check1}
    \begin{tikzpicture}[anchorbase]
      \draw[->] (0.6,-0.5) .. controls (0.1,0) and (0,-0.1) .. (0,0.3) -- (0,0.7) arc (180:0:.3) -- (0.6,0.3) .. controls (0.6,-0.1) and (0.5,0) .. (0,-0.5);
      \redcircle{(0,0.6)} node[anchor=west,color=black] {$r$};
      \bluedot{(0,0.3)} node[anchor=west,color=black] {$\chk{b}$};
    \end{tikzpicture}
    = 0,
  \end{equation}
\end{minipage}%
\begin{minipage}{0.33\linewidth}
  \begin{equation} \label{eq:inverse-check2}
    \begin{tikzpicture}[anchorbase]
      \draw[->] (0,0) .. controls (0.3,-0.3) and (0.6,-0.3) .. (0.6,-0.6) arc(360:180:0.3) .. controls (0,-0.3) and (0.3,-0.3) .. (0.6,0);
      \greensquare{(0.3,-0.9)} node[anchor=north,color=black] {$(r,b)$};
    \end{tikzpicture}
    = 0,
  \end{equation}
\end{minipage}
\begin{minipage}{0.33\linewidth}
  \begin{equation} \label{eq:inverse-check3}
    \begin{tikzpicture}[anchorbase]
      \draw[<-] (0.3,0) arc(0:360:0.3);
      \greensquare{(0,-0.3)} node[anchor=north,color=black] {\squarelabel{(s,c)}};
      \bluedot{(-0.25,-0.15)} node[anchor=east,color=black] {$\chk{b}$};
      \redcircle{(-0.25,0.15)} node[anchor=east,color=black] {$r$};
    \end{tikzpicture}
    = \delta_{r,s} \delta_{b,c}.
  \end{equation}
\end{minipage}\par\vspace{\belowdisplayskip}

To prove \cref{eq:inverse-check1}, we compute
\begin{equation} \label{eq:inverse-check1-calc}
  \begin{tikzpicture}[anchorbase]
    \draw[->] (0.6,-0.5) .. controls (0.1,0) and (0,-0.1) .. (0,0.3) -- (0,0.7) arc (180:0:.3) -- (0.6,0.3) .. controls (0.6,-0.1) and (0.5,0) .. (0,-0.5);
    \redcircle{(0,0.6)} node[anchor=west,color=black] {$r$};
    \bluedot{(0,0.3)} node[anchor=west,color=black] {$\chk{b}$};
  \end{tikzpicture}
  \ \stackrel{\cref{eq:t-def-alt}}{=}\
  \begin{tikzpicture}[anchorbase]
    \draw[->] (0,-0.2) .. controls (0.25,0.5) and (0.5,1) .. (0.75,1) .. controls (1.25,1) and (1.25,0.1) .. (0.75,0.1) .. controls (0,0.1) and (0,1) .. (-0.5,1) .. controls (-1,1) and (-1,0.5) .. (-1,-0.2);
    \redcircle{(0.5,0.84)} node[anchor=east,color=black] {$r$};
    \bluedot{(0.3,0.5)} node[anchor=west,color=black] {$\chk{b}$};
  \end{tikzpicture}
  \ \stackrel[\cref{rel:dotslide2}]{\cref{rel:dot-token-up-slide}}{=}\
  \begin{tikzpicture}[anchorbase]
    \draw[->] (0,-0.2) .. controls (0.25,0.5) and (0.5,1) .. (0.75,1) .. controls (1.25,1) and (1.25,0.1) .. (0.75,0.1) .. controls (0,0.1) and (0,1) .. (-0.5,1) .. controls (-1,1) and (-1,0.5) .. (-1,-0.2);
    \redcircle{(0.08,0)} node[anchor=east,color=black] {$r$};
    \bluedot{(0.5,0.84)} node[anchor=south,color=black] {$\psi^{-r}(\chk{b})$};
  \end{tikzpicture}
  \ - \sum_{t=0}^{r-1}\
  \begin{tikzpicture}[anchorbase]
    \draw[<-] (0,-.6) -- (0,0.6) arc (180:0:.3) -- (.6,-.6);
    \draw[->] (2,1.4) -- (2,0.2) arc(360:180:0.3) -- (1.4,1.4) arc(180:0:0.3);
    \redcircle{(0.6,0.2)} node[anchor=east,color=black] {$t$};
    \bluedot{(0.6,0.6)} node[anchor=east,color=black] {$a$};
    \redcircle{(1.4,1.1)} node[anchor=east,color=black] {\dotlabel{r-1-t}};
    \bluedot{(1.4,0.2)} node[anchor=east,color=black] {$\chk{a}$};
    \bluedot{(1.4,1.4)} node[anchor=east,color=black] {\dotlabel{\psi^{-r}(\chk{b})}};
  \end{tikzpicture}
  \ \stackrel[\substack{\cref{rel:dot-token-up-slide} \\ \cref{theo-eq:clockwise-circ}}]{\substack{\cref{rel:tokenslide-up-right} \\ \cref{theo-eq:right-curl}}}{=} 0.
\end{equation}
To prove \cref{eq:inverse-check2}, note that
\[
  \begin{tikzpicture}[anchorbase]
    \draw[->] (0,0) .. controls (0.3,-0.3) and (0.6,-0.3) .. (0.6,-0.6) arc(360:180:0.3) .. controls (0,-0.3) and (0.3,-0.3) .. (0.6,0);
    \greensquare{(0.3,-0.9)} node[anchor=north,color=black] {$(r,b)$};
  \end{tikzpicture}
  = - \sum_{s \ge 0} \
  \begin{tikzpicture}[anchorbase]
    \draw[<-] (0.6,1) .. controls (0.3,0.7) and (0,0.5) .. (0,0.3) -- (0,0) arc (180:360:.3) -- (0.6,0.3) .. controls (0.6,0.5) and (0.3,0.7) .. (0,1);
    \draw[->] (0.6,-0.4) arc (90:450:.3);
    \redcircle{(0.6,0.15)} node[anchor=east,color=black] {$s$};
    \bluedot{(0,0.15)} node[anchor=east,color=black] {$a$};
    \bluedot{(0.9,-0.7)} node[anchor=west,color=black] {$\chk{a} b$};
    \redcircle{(0.3,-0.7)} node[anchor=east,color=black] {\dotlabel{-r-s-2}};
  \end{tikzpicture}\ .
\]
By \cref{theo-eq:neg-ccbubble}, the bubble above is zero if $s \ge k$, while for $0 \le s < k$, the curl is zero by an argument similar to \cref{eq:inverse-check1-calc}.
\details{
  \[
    \begin{tikzpicture}[anchorbase]
      \draw[<-] (0.6,0.5) .. controls (0.1,0) and (0,0.1) .. (0,-0.3) -- (0,-0.7) arc (180:360:.3) -- (0.6,-0.3) .. controls (0.6,0.1) and (0.5,0) .. (0,0.5);
      \redcircle{(0.6,-0.5)} node[anchor=west,color=black] {$s$};
      \bluedot{(0,-0.5)} node[anchor=west,color=black] {$c$};
    \end{tikzpicture}
    \ \stackrel{\cref{eq:t-def}}{=}\
    \begin{tikzpicture}[anchorbase]
      \draw[<-] (0,0.2) .. controls (0.25,-0.5) and (0.5,-1) .. (0.75,-1) .. controls (1.25,-1) and (1.25,-0.1) .. (0.75,-0.1) .. controls (0,-0.1) and (0,-1) .. (-0.5,-1) .. controls (-1,-1) and (-1,-0.5) .. (-1,0.2);
      \redcircle{(1.12,-0.5)} node[anchor=west,color=black] {$s$};
      \bluedot{(0.35,-0.6)} node[anchor=west,color=black] {$c$};
    \end{tikzpicture}
    \ \stackrel[\cref{rel:dotslide2}]{\cref{rel:dot-right-cupcap}}{=}\
    \begin{tikzpicture}[anchorbase]
      \draw[<-] (0,0.2) .. controls (0.25,-0.5) and (0.5,-1) .. (0.75,-1) .. controls (1.25,-1) and (1.25,-0.1) .. (0.75,-0.1) .. controls (0,-0.1) and (0,-1) .. (-0.5,-1) .. controls (-1,-1) and (-1,-0.5) .. (-1,0.2);
      \redcircle{(0,-0.6)} node[anchor=east,color=black] {$s$};
      \bluedot{(0.5,-0.84)} node[anchor=north,color=black] {$c$};
    \end{tikzpicture}
    \ - \sum_{t=0}^{s-1}\
    \begin{tikzpicture}[anchorbase]
      \draw[->] (0,0.6) -- (0,-0.6) arc (180:360:.3) -- (0.6,0.6);
      \draw[<-] (3,-1.4) -- (3,-0.2) arc(0:180:0.3) -- (2.4,-1.4) arc(180:360:0.3);
      \redcircle{(0.6,-0.6)} node[anchor=east,color=black] {$t$};
      \bluedot{(0.6,-0.2)} node[anchor=east,color=black] {$b$};
      \redcircle{(2.4,-0.2)} node[anchor=east,color=black] {\dotlabel{s-1-t}};
      \bluedot{(2.4,-0.8)} node[anchor=east,color=black] {$\chk{b}$};
      \bluedot{(2.4,-1.4)} node[anchor=east,color=black] {$c$};
    \end{tikzpicture}
    \ \stackrel[\substack{\cref{rel:f-left-cup} \\ \cref{theo-eq:clockwise-circ}}]{\substack{\cref{rel:tokenslide-up-left} \\ \cref{theo-eq:right-curl}}}{=} 0.
  \]
  Note that the proof of \cref{rel:f-left-cup} in the case $k > 0$ (obtained by applying $\omega$ to all diagrams in the proof of \cref{rel:f-left-cap} in the case $k < 0$), which we used above, is valid in $\mathcal{C}$.
}

Next we prove \cref{eq:inverse-check3}.  We have
\begin{equation} \label{eq:theo-inverse-check-left-cup}
  \begin{tikzpicture}[anchorbase]
    \draw[<-] (0.3,0) arc(0:360:0.3);
    \greensquare{(0,-0.3)} node[anchor=north,color=black] {\squarelabel{(s,a)}};
    \bluedot{(-0.25,-0.15)} node[anchor=east,color=black] {$\chk{b}$};
    \redcircle{(-0.25,0.15)} node[anchor=east,color=black] {$r$};
  \end{tikzpicture}
  = - \sum_{t \ge 0}\
  \begin{tikzpicture}[anchorbase]
    \draw[->] (0,0.3) arc (90:-270:0.3);
    \bluedot{(-0.3,0)} node[anchor=east,color=black] {$\chk{b}e$};
    \redcircle{(0.3,0)} node[anchor=west] {\dotlabel{r+t}};
    \draw[->] (0,-0.5) arc (90:450:0.3);
    \bluedot{(0.3,-0.8)} node[anchor=west,color=black] {$\chk{e}a$};
    \redcircle{(-0.3,-0.8)} node[anchor=east,color=black] {\dotlabel{-s-t-2}};
  \end{tikzpicture}\ .
\end{equation}
If $r < s$, then all the terms on the right side of \cref{eq:theo-inverse-check-left-cup} vanish by \cref{theo-eq:clockwise-circ,theo-eq:counterclockwise-circ} and we are done.  If $r=s$, then only the $t=0$ term survives, and \cref{eq:inverse-check3} follows from \cref{eq:f-in-basis,theo-eq:clockwise-circ,theo-eq:counterclockwise-circ}.  Now suppose $r > s$.  Note that the proof of \cref{rel:f-left-cup} in the case $k > 0$ (obtained by applying $\omega$ to all diagrams in the proof of \cref{rel:f-left-cap} in the case $k < 0$) is valid in $\mathcal{C}$.  Also, it follows from the computation \cref{eq:C-d'-uniqueness} below that \cref{rel:f-left-cap} holds in $\mathcal{C}$ when $k \ge 0$.  Thus, letting $f = \psi^{-k}(\chk{b}a)$, we have
\begin{multline*}
  \begin{tikzpicture}[anchorbase]
    \draw[<-] (0.3,0) arc(0:360:0.3);
    \greensquare{(0,-0.3)} node[anchor=north,color=black] {\squarelabel{(s,a)}};
    \bluedot{(-0.25,-0.15)} node[anchor=east,color=black] {$\chk{b}$};
    \redcircle{(-0.25,0.15)} node[anchor=east,color=black] {$r$};
  \end{tikzpicture}
  = - \sum_{t \ge 0}\
  \begin{tikzpicture}[anchorbase]
    \draw[->] (0,0.3) arc (90:-270:0.3);
    \bluedot{(-0.3,0)} node[anchor=east,color=black] {$\chk{b}e$};
    \redcircle{(0.3,0)} node[anchor=west] {\dotlabel{r+t}};
    \draw[->] (0,-0.5) arc (90:450:0.3);
    \bluedot{(0.3,-0.8)} node[anchor=west,color=black] {$\chk{e}a$};
    \redcircle{(-0.3,-0.8)} node[anchor=east,color=black] {\dotlabel{-s-t-2}};
  \end{tikzpicture}
  \stackrel[\cref{eq:f-Euler-commute}]{\substack{\cref{rel:f-left-cap} \\ \cref{rel:f-left-cup}}}{=} - \sum_{t \ge 0}\
  \begin{tikzpicture}[anchorbase]
    \draw[->] (0,0.3) arc (90:-270:0.3);
    \bluedot{(0.3,0)} node[anchor=west,color=black] {$fe$};
    \redcircle{(-0.3,0)} node[anchor=east] {\dotlabel{r+t}};
    \draw[->] (0,-0.5) arc (90:450:0.3);
    \bluedot{(-0.3,-0.8)} node[anchor=east,color=black] {$\chk{e}$};
    \redcircle{(0.3,-0.8)} node[anchor=west,color=black] {\dotlabel{-s-t-2}};
  \end{tikzpicture}
  \\
  = - \sum_{u=0}^{r-s}\
  \begin{tikzpicture}[anchorbase]
    \draw[->] (0,0.3) arc (90:-270:0.3);
    \bluedot{(0.3,0)} node[anchor=west,color=black] {$fe$};
    \redcircle{(-0.3,0)} node[anchor=east] {\dotlabel{u+k-1}};
    \draw[->] (0,-0.5) arc (90:450:0.3);
    \bluedot{(-0.3,-0.8)} node[anchor=east,color=black] {$\chk{e}$};
    \redcircle{(0.3,-0.8)} node[anchor=west,color=black] {\dotlabel{r-s-u-k-1}};
  \end{tikzpicture}
  =
  \ccbubble{$f$}{\dotlabel{r-s-k-1}}
  - \sum_{u=1}^{r-s}\
  \begin{tikzpicture}[anchorbase]
    \draw[->] (0,0.3) arc (90:-270:0.3);
    \bluedot{(0.3,0)} node[anchor=west,color=black] {$fe$};
    \redcircle{(-0.3,0)} node[anchor=east] {\dotlabel{u+k-1}};
    \draw[->] (0,-0.5) arc (90:450:0.3);
    \bluedot{(-0.3,-0.8)} node[anchor=east,color=black] {$\chk{e}$};
    \redcircle{(0.3,-0.8)} node[anchor=west,color=black] {\dotlabel{r-s-u-k-1}};
  \end{tikzpicture}
  = 0,
\end{multline*}
where, in the second equality, we changed from a sum over $e \in B$ to a sum over $\psi^{-k}(e)$, and the last equality follows by expansion of a determinant, as in the proof of \cref{lem:bubble-determinant-identities}.
\details{
  The left equality in \cref{eq:det-expansion}, with $r$ replaced by $r-s$, followed from an expansion of determinants; hence it holds in $\mathcal{C}$ in light of \cref{theo-eq:neg-ccbubble}.  Again, by \cref{theo-eq:neg-ccbubble}, the leftmost expression in \cref{eq:det-expansion} in equal to the rightmost expression.  Hence the right equality in \cref{eq:det-expansion} holds, which is exactly what we need.
}
This completes the proof of \cref{eq:inverse-check3}.

Since we have shown that $\mathcal{C}$ satisfies the defining relations of $\Heis_{F,k}$, in $\mathcal{C}$ we can now use all the relations that we deduced from the defining relations of $\Heis_{F,k}$.  We will do so in what follows.

We next show that $c'$ and $d'$ are the \emph{unique} morphisms in $\mathcal{C}$ satisfying \cref{theo-eq:doublecross-up-down,theo-eq:doublecross-down-up,theo-eq:right-curl,theo-eq:clockwise-circ,theo-eq:left-curl,theo-eq:counterclockwise-circ}.  To do this, we prove that these relations can be used to express $c'$ and $d'$ in terms of the other generators.  First note that $t'$ can be characterized as the first entry in the inverse of the morphism \cref{eq:inversion-relation-pos-l} when $k \ge 0$, or as the first entry in the inverse of the morphism \cref{eq:inversion-relation-neg-l} when $k < 0$.  Hence $t'$ does not depend on $c'$ and $d'$, even though it is defined in terms of them.  When $k \ge 0$, as in \cref{eq:inverse-check1-calc} we have
\begin{equation} \label{eq:C-d'-uniqueness}
  \begin{tikzpicture}[anchorbase]
    \draw[->] (0.6,-0.5) .. controls (0.1,0) and (0,-0.1) .. (0,0.3) -- (0,0.7) arc (180:0:.3) -- (0.6,0.3) .. controls (0.6,-0.1) and (0.5,0) .. (0,-0.5);
    \redcircle{(0,0.4)} node[anchor=west,color=black] {$k$};
  \end{tikzpicture}
  \ \stackrel{\cref{eq:t-def-alt}}{=}\
  \begin{tikzpicture}[anchorbase]
    \draw[->] (0,-0.2) .. controls (0.25,0.5) and (0.5,1) .. (0.75,1) .. controls (1.25,1) and (1.25,0.1) .. (0.75,0.1) .. controls (0,0.1) and (0,1) .. (-0.5,1) .. controls (-1,1) and (-1,0.5) .. (-1,-0.2);
    \redcircle{(0.3,0.5)} node[anchor=west,color=black] {$k$};
  \end{tikzpicture}
  \ \stackrel{\cref{rel:dotslide2}}{=}\
  \begin{tikzpicture}[anchorbase]
    \draw[->] (0,-0.2) .. controls (0.25,0.5) and (0.5,1) .. (0.75,1) .. controls (1.25,1) and (1.25,0.1) .. (0.75,0.1) .. controls (0,0.1) and (0,1) .. (-0.5,1) .. controls (-1,1) and (-1,0.5) .. (-1,-0.2);
    \redcircle{(0.08,0)} node[anchor=east,color=black] {$k$};
  \end{tikzpicture}
  \ - \sum_{r=0}^{k-1}\
  \begin{tikzpicture}[anchorbase]
    \draw[<-] (0,-.6) -- (0,0.6) arc (180:0:.3) -- (.6,-.6);
    \draw[->] (2,1.2) -- (2,0.2) arc(360:180:0.3) -- (1.4,1.2) arc(180:0:0.3);
    \redcircle{(0.6,0.2)} node[anchor=east,color=black] {$r$};
    \bluedot{(0.6,0.6)} node[anchor=east,color=black] {$a$};
    \redcircle{(1.4,1.2)} node[anchor=east,color=black] {\dotlabel{k-1-r}};
    \bluedot{(1.4,0.2)} node[anchor=east,color=black] {$\chk{a}$};
  \end{tikzpicture}
  \ \stackrel[\substack{\cref{theo-eq:clockwise-circ} \\ \cref{eq:f-in-basis}}]{\substack{\cref{theo-eq:right-curl} \\ \cref{rel:dot-token-up-slide} \\ \cref{eq:f-right-cupcap-slide}}}{=}\
  \begin{tikzpicture}[anchorbase]
    \draw[<-] (0,-.2) -- (0,0) arc (180:0:.3) -- (.6,-.2);
  \end{tikzpicture}
  \ .
\end{equation}
Hence $d'$ is unique when $k \ge 0$.  Similarly, when $k \le 0$, we have
\[
  \begin{tikzpicture}[anchorbase]
    \draw[->] (0.6,0.5) .. controls (0.1,0) and (0,0.1) .. (0,-0.3) -- (0,-0.7) arc (180:360:.3) -- (0.6,-0.3) .. controls (0.6,0.1) and (0.5,0) .. (0,0.5);
    \redcircle{(0.6,-0.4)} node[anchor=east,color=black] {$k$};
  \end{tikzpicture}
  \ \stackrel{\cref{eq:t-def-alt}}{=}\
  \begin{tikzpicture}[anchorbase]
    \draw[<-] (0,0.2) .. controls (-0.25,-0.5) and (-0.5,-1) .. (-0.75,-1) .. controls (-1.25,-1) and (-1.25,-0.1) .. (-0.75,-0.1) .. controls (0,-0.1) and (0,-1) .. (0.5,-1) .. controls (1,-1) and (1,-0.5) .. (1,0.2);
    \redcircle{(-0.3,-0.5)} node[anchor=east,color=black] {$k$};
  \end{tikzpicture}
  \ \stackrel{\cref{rel:dotslide2}}{=}\
  \begin{tikzpicture}[anchorbase]
    \draw[<-] (0,0.2) .. controls (-0.25,-0.5) and (-0.5,-1) .. (-0.75,-1) .. controls (-1.25,-1) and (-1.25,-0.1) .. (-0.75,-0.1) .. controls (0,-0.1) and (0,-1) .. (0.5,-1) .. controls (1,-1) and (1,-0.5) .. (1,0.2);
    \redcircle{(-0.12,-0.1)} node[anchor=west,color=black] {$k$};
  \end{tikzpicture}
  \ + \sum_{r=0}^{k-1}\
  \begin{tikzpicture}[anchorbase]
    \draw[->] (0,0.6) -- (0,-0.6) arc(360:180:.3) -- (-0.6,0.6);
    \draw[<-] (-2,-1.2) -- (-2,-0.2) arc(180:0:0.3) -- (-1.4,-1.2) arc(360:180:0.3);
    \redcircle{(-0.6,-0.2)} node[anchor=west,color=black] {$r$};
    \bluedot{(-0.6,-0.6)} node[anchor=west,color=black] {$\chk{a}$};
    \redcircle{(-1.4,-1.2)} node[anchor=west,color=black] {\dotlabel{k-1-r}};
    \bluedot{(-1.4,-0.2)} node[anchor=west,color=black] {$a$};
  \end{tikzpicture}
  \ \stackrel[\substack{\cref{theo-eq:counterclockwise-circ} \\ \cref{eq:f-in-dual-basis}}]{\substack{\cref{theo-eq:left-curl} \\ \cref{rel:f-left-cap}}}{=}\
  \begin{tikzpicture}[anchorbase]
    \draw[<-] (0,0.2) -- (0,0) arc (180:360:0.3) -- (0.6,0.2);
  \end{tikzpicture}
  \ .
\]
Hence $c'$ is unique when $k \le 0$.

It remains to prove that $d'$ is unique when $k < 0$ and that $c'$ is unique when $k > 0$.  When $k > 0$, it follows from the above that the $r=k-1$, $b=1$ entry of the inverse of \cref{eq:inversion-relation-pos-l} is
\[
  \begin{tikzpicture}[>=To,baseline={([yshift=1ex]current bounding box.center)}]
    \draw[<-] (0,0.2) -- (0,0) arc (180:360:.3) -- (0.6,0.2);
    \greensquare{(0.3,-0.3)} node[anchor=north,color=black] {\squarelabel{(k-1,1)}};
  \end{tikzpicture}
  = - \sum_{s \ge 0} \
  \begin{tikzpicture}[anchorbase]
    \draw[<-] (0,0.3) -- (0,0) arc (180:360:.3) -- (0.6,0.3);
    \draw[->] (0.6,-0.4) arc (90:450:.3);
    \redcircle{(0.6,0)} node[anchor=east,color=black] {$s$};
    \bluedot{(0,0)} node[anchor=east,color=black] {$b$};
    \bluedot{(0.9,-0.7)} node[anchor=west,color=black] {$\chk{b}$};
    \redcircle{(0.3,-0.7)} node[anchor=east,color=black] {\dotlabel{-r-s-2}};
  \end{tikzpicture}
  \stackrel[\cref{rel:f-left-cap}]{\cref{rel:dot-right-cupcap}}{=} - \sum_{s \ge 0} \
  \begin{tikzpicture}[anchorbase]
    \draw[<-] (0,0.3) -- (0,0) arc (180:360:.3) -- (0.6,0.3);
    \draw[->] (0.6,-0.4) arc (90:450:.3);
    \redcircle{(0.6,0)} node[anchor=east,color=black] {$s$};
    \bluedot{(0,0)} node[anchor=east,color=black] {$b$};
    \bluedot{(0.3,-0.7)} node[anchor=east,color=black] {$\psi^{-k}(\chk{b})$};
    \redcircle{(0.9,-0.7)} node[anchor=west,color=black] {\dotlabel{-r-s-2}};
  \end{tikzpicture}
  \ \stackrel[\cref{eq:f-in-basis}]{\cref{theo-eq:counterclockwise-circ}}{=} -\
  \begin{tikzpicture}[anchorbase]
    \draw[<-] (0,0.2) -- (0,0) arc (180:360:.3) -- (0.6,0.2);
  \end{tikzpicture}\ .
\]
Therefore $c'$ is unique when $k > 0$.  The proof that $d'$ is unique when $k < 0$ is analogous.

We can now complete the proof of theorem.  It is clear from the definitions that $\mathbold{A} \circ \mathbold{B} = \id_{\Heis_{F,k}}$.  It is also clear that $\mathbold{B} \circ \mathbold{A}$ is the identity on $x$, $s$, $c$, and $d$.  It then follows from the uniqueness established above that it is the identity on $c'$ and $d'$.  Hence $\mathbold{B} \circ \mathbold{A} = \id_\mathcal{C}$.  So $\Heis_{F,k}$ and $\mathcal{C}$ are isomorphic, which establishes the equivalent presentation from the statement of the theorem.  Finally, since $\Heis_{F,k}$ and $\mathcal{C}$ are isomorphic, the uniqueness of $c'$ and $d'$ in $\mathcal{C}$ established above demonstrates that they are also the unique isomorphisms in $\Heis_{F,k}$ satisfying \cref{theo-eq:doublecross-up-down,theo-eq:doublecross-down-up,theo-eq:right-curl,theo-eq:clockwise-circ,theo-eq:left-curl,theo-eq:counterclockwise-circ}.

\subsection{Proof of \cref{theo:specializations}}

When $F = \kk$, \cref{def:H} coincides with \cite[Def.~1.1]{Bru17}.  So part~\cref{theo-item:F=k-specialization} follows immediately from \cite[Th.~1.4]{Bru17}.

We now prove part~\ref{theo-item:xi=-1-specialization}.  The need to pass to the underlying category of the $\Pi$-envelope of $\Heis_{F,-1}(R)$ arises from the different notion of supercategory used in \cite{RS17}, where all morphisms are considered to be even of degree zero, but between shifted objects.  See the discussion in \cite[\S1]{BE17}.  We will suppress this difference in the argument to follow.

When $k = -1$, we have
\[
  \begin{tikzpicture}[anchorbase]
    \draw[->] (0,-0.75) .. controls (0,0.5) and (0.5,0.5) .. (0.5,0) .. controls (0.5,-0.5) and (0,-0.5) .. (0,0.75);
  \end{tikzpicture}
  \ \stackrel[\cref{eq:f-in-basis}]{\substack{\cref{rel:right-dotted-curl} \\ \cref{eq:inf-grass1}}}{=}\
  \begin{tikzpicture}[anchorbase]
    \draw[->] (0,-0.75) -- (0,0.75);
    \redcircle{(0,0.05)};
  \end{tikzpicture}
  \ + \
  \begin{tikzpicture}[anchorbase]
    \draw[->] (0,-0.75) -- (0,0.75);
    \draw[<-] (0.95,-0.05) arc(90:450:0.3);
    \bluedot{(0.65,-0.35)} node[anchor=east,color=black] {$\chk{b}$};
    \redcircle{(1.25,-0.35)} node[anchor=west,color=black] {$-1$};
    \bluedot{(0,0.4)} node[anchor=east,color=black] {$b$};
  \end{tikzpicture}
  \ \stackrel[\cref{eq:double-dual}]{\cref{rel:f-left-cup}}{=}\
  \begin{tikzpicture}[anchorbase]
    \draw[->] (0,-0.75) -- (0,0.75);
    \redcircle{(0,0.05)};
  \end{tikzpicture}
  \ + \
  \begin{tikzpicture}[anchorbase]
    \draw[->] (0,-0.75) -- (0,0.75);
    \bluedot{(0,-0.4)} node[anchor=east,color=black] {$\chk{b}$};
    \draw (0.6,0.2) node[shape=circle,draw,inner sep=2pt] (char) {$b$};
  \end{tikzpicture}
  \ .
\]
Thus, if $R = \left\{ \circled{$f$} : f \in F \right\}$, then the right curl is equal to the dot in $\Heis_{F,-1}(R)$.  \Cref{theo:alternate-presentation} gives a presentation of $\Heis_{F^\op,-1}$ with generating morphisms $x$, $s$, $c$, $d$, $c'$, $d'$, and $\beta_f$, $f \in F^\op$.  Comparing the relations \cref{rel:token-colide-up,rel:braid-up,rel:doublecross-up,rel:dot-token-up-slide,rel:tokenslide-up-right,rel:dotslide1,rel:right-adjunction-up,rel:right-adjunction-down,theo-eq:doublecross-up-down,theo-eq:doublecross-down-up,theo-eq:right-curl,theo-eq:clockwise-circ,theo-eq:left-curl,theo-eq:counterclockwise-circ} to the defining relations of $\cH_F'$ given in \cite[\S6]{RS17}, we see that there is a strict monoidal functor $\Heis_{F^\op,-1} \to \cH_F'$ sending $\sQ_+$ to $\mathsf{P}$, $\sQ_-$ to $\sQ$, the morphisms $s,c,d,c',d'$ and $\beta_f$, $f \in F$, to the morphisms in $\cH_F'$ represented by the same diagrams, and $x$ to the right curl.  (Note that the apparent sign difference between \cite[(6.13)]{RS17} and \cref{theo-eq:doublecross-down-up} arises from the fact that $\chk{b}$ denotes the \emph{right} dual in \cite{RS17} and that we consider the opposite superalgebra $F^\op$.)  This functor sends $\circled{$f$}$, $f \in F$, to a figure-eight diagram, which is zero since it contains a left curl.  Thus our functor induces a functor from the additive envelope of $\Heis_{F^\op,-1}(R)$ to $\cH_F'$.  This functor is an isomorphism since it has a two-sided inverse.  Precisely, the inverse sends any diagram representing a morphism in $\cH_F'$ to the morphism in the additive envelope of $\Heis_{F^\op,-1}(R)$ represented by the same diagram.  This functor is well-defined since all of the local relations of \cite[\S6]{RS17} hold in $\Heis_{F^\op,-1}(R)$.  The fact that it is a two-sided inverse is clear.  (Here we use the above fact that the right curl is equal to the dot in $\Heis_{F,-1}(R)$.)

\subsection{Proof of \cref{theo:categorification}}

In this subsection, we assume that $\kk$ is an algebraically closed field of characteristic zero.  Since the proofs of the results to follow are very similar to those of \cite{MS17,RS17}, we only provide sketches.  We let $\{s,\epsilon\}X$, $s \in \Z$, $\epsilon \in \Z_2$, denote the shift of an object $X$ in $\Kar \Heis_{F,k}$.

It follows from \cref{rel:braid-up,rel:doublecross-up,rel:dot-token-up-slide,rel:tokenslide-up-right,rel:dotslide1} and \cref{eq:F-to-sQ_-,rel:doublecross-down,rel:braid-down,rel:dot-token-down-slide,eq:tokenslide-downcross1,rel:dotslide-down1} that, for $n \ge 0$, we have graded superalgebra homomorphisms
\begin{equation} \label{eq:affine-wreath-to-end-alg}
  \cA_n(F) \to \END_{\Heis_{F,k}}(\sQ_+^n),\qquad
  \cA_n(F)^\op \to \END_{\Heis_{F,k}}(\sQ_-^n),
\end{equation}
where $x_i$ is mapped to a dot on the $i$-th strand, $f_i$ is mapped to a token labelled $f$ on the $i$-th strand, and $s_i$ is mapped to a crossing of the $i$-th and $(i+1)$-st strands.  Here we number strands \emph{from right to left}.

For $n \ge 1$ and an idempotent $f \in F$, let $e_{f,(n)} = f^{\otimes n} \frac{1}{n!} \sum_{\pi \in S_n} \pi$.  By an abuse of notation, we also use $e_{f,(n)}$ to denote the images of this element under the maps \cref{eq:affine-wreath-to-end-alg}, and we let $\sQ_\pm^{f,(n)} := (\sQ_\pm^n, e_{f,(n)})$ denote the corresponding object in $\Kar \Heis_{F,k}$.  We will denote the idempotents $e_{f,(n)}$ by boxes labelled $f,(n)$:
\[
  \begin{tikzpicture}[anchorbase]
    \draw (-0.75,-0.25) rectangle (0.75,0.25) node[midway] {$f,(n)$};
    \draw[dashed] (0,-0.75) -- (0,-0.25);
    \draw[dashed,->] (0,0.25) -- (0,0.75);
  \end{tikzpicture}
  \ ,\qquad
  \begin{tikzpicture}[anchorbase]
    \draw (-0.75,-0.25) rectangle (0.75,0.25) node[midway] {$f,(n)$};
    \draw[dashed,<-] (0,-0.75) -- (0,-0.25);
    \draw[dashed] (0,0.25) -- (0,0.75);
  \end{tikzpicture}
  \ ,
\]
where here, and in what follows, we use undecorated dashed strands to represent multiple parallel strands when the number of strands is clear from the context.  We will sometimes omit the strands emanating from the tops of boxes appearing at the top of a diagram, and similarly for boxes appearing at the bottom of a diagram.

Fix idempotents $f,g \in F$ and an enumeration $b_1,\dotsc,b_M$ of a subset of $B$ such that $\{g \chk{b}_i f : 1 \le i \le M\}$ is a basis for $gFf$.  For $r \ge 1$, let
\[
  B_r^{f,g} = \left\{ \big( (s_1,i_1), \dotsc, (s_r,i_r) \big) \in (\{0,1,\dotsc,k-1\} \times \{1,2,\dotsc,M\})^r : (s_1,i_1) \le (s_2,i_2) \le \dotsb (s_r,i_r) \right\},
\]
where we have used the lexicographic order on pairs of integers.  For $z = \big( (s_1,i_1), \dotsc, (s_r,i_r) \big) \in B_r^{f,g}$, we define
\[
  \begin{tikzpicture}[anchorbase]
    \draw[->,dashed] (0,-0.5) -- (0,0.5);
    \opensquare{(0,0)};
    \draw (0,0) node[anchor=east,color=black] {$z$};
  \end{tikzpicture}
  =
  \begin{tikzpicture}[anchorbase]
    \draw[->] (0,-0.5) -- (0,0.5);
    \redcircle{(0,0.25)} node[anchor=east,color=black] {$s_1$};
    \bluedot{(0,-0.25)} node[anchor=east,color=black] {$\chk{b}_{i_1}$};
    \draw[->] (1,-0.5) -- (1,0.5);
    \redcircle{(1,0.25)} node[anchor=east,color=black] {$s_2$};
    \bluedot{(1,-0.25)} node[anchor=east,color=black] {$\chk{b}_{i_2}$};
    \draw (1.5,0) node {$\cdots$};
    \draw[->] (2.5,-0.5) -- (2.5,0.5);
    \redcircle{(2.5,0.25)} node[anchor=east,color=black] {$s_r$};
    \bluedot{(2.5,-0.25)} node[anchor=east,color=black] {$\chk{b}_{i_r}$};
  \end{tikzpicture}
  ,\qquad
  |z| = \sum_{j=1}^r (|b_{i_j}| + s_j \Delta),\quad
  \bar z = \sum_{j=1}^r \bar{b}_{i_j}.
\]
\details{
  Note that the diagrammatic conventions for monoidal supercategories imply that tokens appearing at the same height yield the same morphism as that obtained by shifting the leftmost token slightly up.  See \cite[(1.3)]{BE17}.
}

For $m,n \ge 1$, $0 \le r \le \min\{m,n\}$, and $z \in B_r^{f,g}$, define
\begin{equation}
  \tau_z =\
  \begin{tikzpicture}[anchorbase]
    \draw (-0.4,-1) rectangle (-2.1,-1.5) node[midway] {$g,(m-r)$};
    \draw (0.4,-1) rectangle (2.1,-1.5) node[midway] {$f,(n-r)$};
    \draw (-0.5,1) rectangle (-2,1.5) node[midway] {$f,(n)$};
    \draw (0.5,1) rectangle (2,1.5) node[midway] {$g,(m)$};
    \draw[->,dashed] (-0.8,1) arc (180:360:0.8);
    \opensquare{(0.72,0.6)};
    \draw{(0.72,0.6)} node[color=black,anchor=east] {$z$};
    \draw[->,dashed] (-1.25,-1) .. controls (-1.25,-0.3) and (1.7,0.3) .. (1.7,1);
    \draw[<-,dashed] (1.25,-1) .. controls (1.25,-0.3) and (-1.7,0.3) .. (-1.7,1);
  \end{tikzpicture}
  \ \colon \{|z|, \bar{z}\} \sQ_+^{g,(m-r)} \otimes \sQ_-^{f,(n-r)} \to \sQ_-^{f,(n)} \otimes \sQ_+^{g,(m)}.
\end{equation}

\begin{lem} \label{lem:commutation-identity-decomp}
  In $\Kar \Heis_{F,k}$, the map
  \[
    \sum_{r=0}^{\min\{m,n\}} \sum_{z \in B_r^{f,g}} \tau_z \colon\
    \bigoplus_{r=0}^{\min\{m,n\}} \bigoplus_{z \in B_r^{f,g}} \{|z|,\bar z\} \sQ_+^{g,(m-r)} \sQ_-^{f,(n-r)} \to \sQ_-^{f,(n)} \sQ_+^{g,(m)}
  \]
  is an isomorphism.
\end{lem}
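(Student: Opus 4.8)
### Proof Proposal for Lemma~\ref{lem:commutation-identity-decomp}

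The plan is to reduce the statement to an iterated application of the ``double-crossing'' decompositions \cref{rel:up-down-doublecross-new} and \cref{rel:down-up-doublecross-new} together with the idempotent absorption properties of the symmetrizers $e_{f,(n)}$. First I would establish the case $m=n=1$ (with arbitrary idempotents $f,g$): here the map in question is precisely the matrix whose entries are the sandwiched crossing $e_{g,(1)} t' e_{f,(1)}$ and the sandwiched decorated cup-cap morphisms $e_{g,(0)}(\cdots)e_{f,(0)}$, indexed by $z \in B_1^{f,g}$. Inserting the idempotents $f,g$ on the relevant strands turns the basis $\{\chk b : b \in B\}$ appearing in \cref{rel:up-down-doublecross-new} into the basis $\{g\chk b_i f\}$ of $gFf$ (using \cref{eq:f-in-basis}, \cref{eq:f-in-dual-basis} and the teleportation identity \cref{eq:f-Euler-commute} to move idempotents through tokens), so that \cref{rel:up-down-doublecross-new} and \cref{rel:down-up-doublecross-new} exhibit the two composites of this matrix with the evident candidate inverse as the appropriate identity morphisms plus nilpotent/zero correction terms governed by \cref{eq:inf-grass1}, \cref{eq:inf-grass2}, \cref{eq:inf-grass3}. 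The verification of the off-diagonal vanishing conditions is exactly the computation already carried out in the proof of \cref{theo:alternate-presentation} (relations \cref{eq:inverse-check1}, \cref{eq:inverse-check2}, \cref{eq:inverse-check3}), now performed inside the idempotent-truncated hom-spaces; this is routine given those results.

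Next I would bootstrap from $\sQ_+\sQ_-$ to $\sQ_+^m\sQ_-^n$ by an inductive/filtration argument. Write $\sQ_+^m\sQ_-^n = \sQ_+^{m-1}(\sQ_+\sQ_-)\sQ_-^{n-1}$ and move the single mixed pair $\sQ_+\sQ_-$ leftward past the $\sQ_-^{n-1}$ using the $m=n=1$ case repeatedly; each application either commutes the strand past (the leading term) or produces a bubble-decorated cap-cup term of strictly smaller total strand number. The key structural input is that the symmetrizers $e_{f,(n)}$, $e_{g,(m)}$ absorb adjacent crossings and permit us to collect the resulting terms: after fully commuting, a term in which $r$ of the $\sQ_+$ strands have been capped off against $r$ of the $\sQ_-$ strands contributes, after absorbing symmetrizers on both sides, exactly the morphism $\tau_z$ for a unique $z\in B_r^{f,g}$ — the lexicographic ordering condition in the definition of $B_r^{f,g}$ reflects precisely the fact that permuting the capped strands among themselves (which is allowed, since we sit between symmetrizers) identifies all orderings of a multiset of labels $(s_j,i_j)$ with its sorted representative. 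Thus the span of the images of the $\tau_z$ is all of $\operatorname{END}(\sQ_-^{f,(n)}\sQ_+^{g,(m)})$-reachable from $\bigoplus \sQ_+^{g,(m-r)}\sQ_-^{f,(n-r)}$, giving surjectivity; injectivity follows because the same commutation moves, run in reverse, express the candidate inverse, and the triangular structure (with respect to $r$, with the error terms lowering $r$) shows the two composites are identity plus strictly lower-filtration terms, hence genuinely the identity after the usual Nakayama-type argument — or more cleanly, one simply checks directly that the explicit inverse built from \cref{eq:inversion-leftcup-def}/\cref{eq:inversion-leftcap-def} composed with symmetrizers does the job, exactly as in the $m=n=1$ case.

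The main obstacle I anticipate is the bookkeeping of signs and of the grading/parity shifts $\{|z|,\bar z\}$ when commuting odd tokens past one another and past crossings, and — more substantively — proving that the collection $\{g\chk b_i f\}$ behaves like an honest dual-basis pair inside $gFf$ so that the truncated infinite-grassmannian relations \cref{eq:inf-grass3} still collapse the correction terms. This requires knowing that $\sum_i g\chk b_i f \otimes (\text{partner})$ is basis-independent and that the relevant traces $\operatorname{tr}(\cdots)$ restrict correctly; all of this follows from \cref{eq:f-in-basis}, \cref{eq:f-in-dual-basis}, \cref{eq:f-Euler-commute} once one observes that for an idempotent $f$ the map $h\mapsto fhf$ on $fFf$ is compatible with the Frobenius form, but making this compatibility precise (including for the shifted bubbles in \cref{theo-eq:neg-ccbubble}, \cref{theo-eq:neg-cbubble}) is where the real work lies. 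Everything else is a direct adaptation of the arguments in \cite{MS17} and \cite{RS17}, so following the excerpt's own instruction I would present it as a sketch, citing \cref{theo:alternate-presentation} and the displayed relations above for the atomic $m=n=1$ step and indicating the induction for the general case.
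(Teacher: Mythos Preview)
Your approach is essentially the same as the paper's: both pull strands through one another using the local crossing/dot relations, collect the resulting decorated cap--cup terms indexed by $z \in B_r^{f,g}$, and then defer to \cite{MS17,RS17} for the detailed bookkeeping. The paper's version is far terser than yours --- it is explicitly a sketch, noting only that dots slide through crossings modulo diagrams with fewer dots (by \cref{rel:dotslide1,rel:dotslide-right1,rel:dotslide-down1,eq:dotslide-left1,rel:dotslide2,rel:dotslide-right2,rel:dotslide-down2,eq:dotslide-left2}) and then citing \cite[Prop.~4.2]{MS17}, \cite[Th.~9.2]{RS17}, and \cite[Lem.~8.1, Cor.~8.2]{BSW1} for the rest. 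In particular it does not single out the $m=n=1$ base case or discuss the dual-basis compatibility with the idempotents $f,g$; all of that is absorbed into the external references.

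One structural difference worth flagging: the paper does \emph{not} argue invertibility via the triangular/filtration picture you sketch. Instead, it observes that the strand-pulling procedure produces morphisms $\tau'_z$ for which $\id_{\sQ_-^{f,(n)} \sQ_+^{g,(m)}} = \sum_z \tau_z \circ \tau'_z$ is a decomposition into \emph{orthogonal idempotents}. This immediately gives a right inverse to the row vector $[\tau_z]$; the left inverse is then obtained by a separate categorical argument, for which the paper cites \cite[Lem.~4.1]{MS17}. Your ``identity plus lower-filtration, hence invertible'' route would also work, but the orthogonal-idempotent formulation is cleaner and avoids the Nakayama-type step.
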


\begin{proof}
  First, note that it follows from \cref{rel:dotslide1,rel:dotslide-right1,rel:dotslide-down1,eq:dotslide-left1,rel:dotslide2,rel:dotslide-right2,rel:dotslide-down2,eq:dotslide-left2}, that dots slide through crossings modulo diagrams with fewer dots.  Therefore, the arguments of \cite[Prop.~4.2]{MS17} and \cite[Th.~9.2]{RS17} show that there exist morphisms
  \[
    \tau_z' \colon \sQ_-^{f,(n)} \sQ_+^{g,(m)} \to \{|z|, \bar{z}\} \sQ_+^{g,(m-r)} \sQ_-^{f,(n-r)},\qquad
    0 \le r \le \min\{m,n\},\ z \in B_r^{f,g},
  \]
  such that
  \[
    \id_{\sQ_-^{f,(n)} \sQ_+^{g,(m)}} = \sum_{r=0}^{\min\{m,n\}} \sum_{z \in B_r^{f,g}} \tau_z \circ \tau_z'
  \]
  is a decomposition into orthogonal idempotents.  Namely, one uses the defining relations to pull the strands in $\id_{\sQ_-^{f,(n)} \sQ_+^{g,(m)}}$ across each other as in \cite[Prop.~4.2]{MS17} (see also \cite[Lem.~8.1, Cor.~8.2]{BSW1}) to get a decomposition of the identity of the above form.  This shows that the morphism defined by the row vector $[\tau_z]_{0 \le r \le \min\{m,n\},\, z \in B_r^{f,g}}^T$ has a right inverse.  Then one argues as in \cite[Lem.~4.1]{MS17} that it also has a left inverse.
\end{proof}

Let $V_1,\dotsc,V_N$ be a complete list of simple finite-dimensional left $F$-modules, up to grading shift, parity shift, and isomorphism.  Shifting with respect to the $\Z$-grading if necessary, we may assume that the $V_i$ are non-negatively graded, with nonzero degree zero piece.  Recall that a simple module is said to be of type $Q$ if it is evenly isomorphic to its parity shift, and type $M$ otherwise.  After possibly reordering, we assume that
\[
  V_1,\dotsc,V_R \text{ are of type $M$ and } V_{R+1},\dotsc,V_N \text{ are of type $Q$}.
\]
For $1 \le i \le N$, fix an idempotent $e_i \in F$ such that $F e_i$ is the projective cover of $V_i$.

We will view $\kk[x]/(x^k)$ as a graded super vector space by declaring $x$ to be even of degree $\Delta$.  If $V$ is a graded super vector space and $X$ is an object of $\Kar \Heis_{F,k}$, we define
\[
  V \otimes X := \sum_{v \in V} \{|v|,\bar{v}\} X,
\]
where the sum is over a homogenous basis of $V$.  The graded dimension of $V$ is defined to be
\[
  \grdim V = \sum_{n \in \Z} \left( q^n \dim V_{n,0} + q^n \pi \dim V_{n,1} \right)
  \in \Z_{q,\pi},
\]
where $V_{n,\epsilon}$ is the piece of $V$ of degree $n$ and parity $\epsilon$.  For $r \ge 0$, we define
\[
  S^r(V) := V^{\otimes r}/\left\langle v - \pi v : \pi \in S_r,\ v \in V^{\otimes r} \right\rangle.
\]
(Recall that the action of $S_r$ on $V^{\otimes r}$ is via superpermutations.)

\begin{prop}
  Suppose $f,g \in F$ are idempotents, $n,m \ge 1$, and $i \in \{R+1,\dotsc,N\}$.  In $\Kar \Heis_{F,k}$, we have the following isomorphisms:
  \begin{gather}
    \sQ_+^{f,(n)} \sQ_+^{g,(m)} \cong \sQ_+^{g,(m)} \sQ_+^{f,(n)},\qquad
    \sQ_-^{(f,n)} \sQ_-^{g,(m)} \cong \sQ_-^{g,(m)} \sQ_-^{f,(n)}, \label{eq:Qpm-easy-commute}
    \\
    \bigoplus_{r=0}^n \sQ_+^{e_i,(2r)} \sQ_+^{e_i,(2n-2r)} \cong \bigoplus_{r=0}^{n-1} \sQ_+^{e_i,(2r+1)} \sQ_+^{e_i,(2n-2r-1)}, \label{eq:Q-function-isom+}
    \\
    \bigoplus_{r=0}^n \sQ_-^{e_i,(2r)} \sQ_-^{e_i,(2n-2r)} \cong \bigoplus_{r=0}^{n-1} \sQ_-^{e_i,(2r+1)} \sQ_-^{e_i,(2n-2r-1)}, \label{eq:Q-function-isom-}
    \\
    \sQ_-^{f,(n)} \sQ_+^{g,(m)} \cong \bigoplus_{r \ge 0} S^r \left( gFf \otimes \kk[x]/(x^k) \right) \otimes \left( \sQ_+^{g,(m-r)} \sQ_-^{f,(n-r)} \right). \label{eq:Qpm-hard-commute}
  \end{gather}
\end{prop}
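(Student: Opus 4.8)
The plan is to deduce all four isomorphisms from the structural results already established, principally \cref{lem:commutation-identity-decomp} and the graded superalgebra homomorphisms \cref{eq:affine-wreath-to-end-alg} together with the symmetrizer idempotents $e_{f,(n)}$. First I would prove \cref{eq:Qpm-easy-commute}. Since the map $\cA_{n+m}(F)\to\END(\sQ_+^{n+m})$ is an algebra homomorphism and $S_{n+m}$ acts by permuting strands, the element $\pi_{n,m}$ of $S_{n+m}$ that swaps the first block of $n$ strands past the last block of $m$ strands intertwines $e_{f,(n)}\otimes e_{g,(m)}$ with $e_{g,(m)}\otimes e_{f,(n)}$ (after applying the token relabelling coming from $f^{\otimes n}$ and $g^{\otimes m}$ being central idempotents in $F^{\otimes(n+m)}$). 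Because the longest block permutation is an involution up to a product of simple transpositions and $s^2=\id$ by \cref{rel:doublecross-up}, this morphism is invertible; this is the standard argument that symmetrizer objects commute in a wreath-product-type category. The same argument on $\sQ_-$, using the opposite homomorphism, gives the second isomorphism in \cref{eq:Qpm-easy-commute}.

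Next I would handle \cref{eq:Qpm-hard-commute}, which is the heart of the proposition. This is precisely the image under the idempotent-completion functor of \cref{lem:commutation-identity-decomp} with $F$ replaced by the subspace $gFf$: taking $f,g$ to be the given idempotents, the lemma provides an isomorphism
\[
  \sQ_-^{f,(n)}\sQ_+^{g,(m)}
  \;\cong\;
  \bigoplus_{r=0}^{\min\{m,n\}}\ \bigoplus_{z\in B_r^{f,g}}\ \{|z|,\bar z\}\,\sQ_+^{g,(m-r)}\sQ_-^{f,(n-r)}.
\]
It then remains to reorganize the indexing set. The set $B_r^{f,g}$ consists of weakly increasing $r$-tuples of pairs $(s,i)$ with $0\le s\le k-1$ and $1\le i\le M$, where $\{g\chk b_i f\}$ is a basis of $gFf$; such weakly increasing tuples are exactly a homogeneous basis of $S^r(gFf\otimes\kk[x]/(x^k))$, once $x$ is assigned degree $\Delta$ and parity $0$, and once one checks that the degree $|z|$ and parity $\bar z$ recorded in the definition match the degree and parity of the corresponding monomial $g\chk b_{i_1}f\,x^{s_1}\otimes\cdots\otimes g\chk b_{i_r}f\,x^{s_r}$ in the symmetric power. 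So the direct sum over $z\in B_r^{f,g}$ of shifts $\{|z|,\bar z\}$ is by definition $S^r(gFf\otimes\kk[x]/(x^k))\otimes(-)$, and the stated isomorphism follows. I should note the harmless point that when $r>\min\{m,n\}$ the symmetric power summand is still defined but the object $\sQ_+^{g,(m-r)}\sQ_-^{f,(n-r)}$ is zero (negative symmetrizer), so extending the sum to all $r\ge0$ as in the statement changes nothing.

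Finally, \cref{eq:Q-function-isom+} and \cref{eq:Q-function-isom-} are the ``type $Q$'' refinements. Here $V_i$ is of type $Q$, so $e_iFe_i$ contains an odd involution, equivalently $e_iFe_i\otimes\kk[x]/(x^k)$ is a free module over the rank-one odd Clifford-type algebra, and the relevant statement is the standard identity of super-symmetric functions: for a purely ``$Q$-type'' super vector space $W$ one has $\bigoplus_{r=0}^n S^{2r}(W)\otimes S^{2(n-r)}(W)\cong\bigoplus_{r=0}^{n-1}S^{2r+1}(W)\otimes S^{2(n-r)-1}(W)$, coming from the vanishing of the relevant symmetric-function generating function at $-1$ in the $Q$-case. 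Concretely I would apply \cref{eq:Qpm-easy-commute} to identify $\sQ_+^{e_i,(a)}\sQ_+^{e_i,(b)}$ with the $(a,b)$-graded piece built from $e_i^{\otimes(a+b)}$-symmetrizers, and then invoke the computation of idempotents in the symmetric group algebra twisted by $F$ exactly as in \cite{MS17} and \cite{RS17}; since those references carry out this computation in the same setting, I would simply cite them, as the paragraph preceding the proposition already signals that only sketches are given. The main obstacle is the bookkeeping in \cref{eq:Qpm-hard-commute}: one must verify carefully that the basis $B_r^{f,g}$ of weakly increasing tuples, with its prescribed degrees and parities, is genuinely a homogeneous basis of the $r$-th super-symmetric power $S^r(gFf\otimes\kk[x]/(x^k))$ — in particular that the lexicographic ordering correctly accounts for the sign rule defining $S^r$ on the odd part of $gFf$ — but this is exactly the combinatorial input that \cref{lem:commutation-identity-decomp} was set up to feed, so no new ideas are required.
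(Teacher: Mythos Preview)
Your proposal is correct and follows essentially the same approach as the paper: \cref{eq:Qpm-easy-commute} via the block-permutation crossing (invertible by \cref{rel:doublecross-up,rel:doublecross-down}), \cref{eq:Qpm-hard-commute} directly from \cref{lem:commutation-identity-decomp} with the reindexing of $B_r^{f,g}$ as a homogeneous basis of $S^r(gFf\otimes\kk[x]/(x^k))$ made explicit, and \cref{eq:Q-function-isom+,eq:Q-function-isom-} by citation to \cite{RS17}. One small caveat: your heuristic for the type-$Q$ identities via symmetric powers $S^r(W)$ of a fixed super vector space is not literally the mechanism in \cite{RS17}---the objects $\sQ_\pm^{e_i,(r)}$ are symmetrizer-idempotent truncations rather than symmetric powers, and the argument there proceeds instead through the odd endomorphism of $Fe_i$ and Pieri-type decompositions---but since both you and the paper defer to that reference for the details, this does not affect the validity of the sketch.
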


\begin{proof}
  It follows from \cref{rel:braid-up,rel:doublecross-up,rel:doublecross-down,rel:braid-down} that symmetrizers slide through crossings when all strands are oriented up or all strands are oriented down.  Thus the relations \cref{eq:Qpm-easy-commute} follow as in \cite[Th.~9.2]{RS17}.
  \details{
    The morphisms
    \[
      \begin{tikzpicture}[anchorbase]
        \draw (-0.25,1) rectangle (-1.35,.5) node[midway] {$f,(n)$};
        \draw (0.25,1) rectangle (1.35,.5) node[midway] {$g,(m)$};
        \draw (-0.25,-1) rectangle (-1.35,-.5) node[midway] {$g,(m)$};
        \draw (0.25,-1) rectangle (1.35,-.5) node[midway] {$f,(n)$};
        \draw[->,dashed] (-0.8,-0.5) .. controls (-0.8,0) and (0.8,0) .. (0.8,0.5);
        \draw[->,dashed] (0.8,-0.5) .. controls (0.8,0) and (-0.8,0) .. (-0.8,0.5);
      \end{tikzpicture}
      \qquad \text{and} \qquad
      \begin{tikzpicture}[anchorbase]
        \draw (-0.25,1) rectangle (-1.35,.5) node[midway] {$g,(m)$};
        \draw (0.25,1) rectangle (1.35,.5) node[midway] {$f,(n)$};
        \draw (-0.25,-1) rectangle (-1.35,-.5) node[midway] {$f,(n)$};
        \draw (0.25,-1) rectangle (1.35,-.5) node[midway] {$g,(m)$};
        \draw[->,dashed] (-0.8,-0.5) .. controls (-0.8,0) and (0.8,0) .. (0.8,0.5);
        \draw[->,dashed] (0.8,-0.5) .. controls (0.8,0) and (-0.8,0) .. (-0.8,0.5);
      \end{tikzpicture}
    \]
    are mutually inverse, giving the first isomorphism in \cref{eq:Qpm-easy-commute}.  The second follows by applying $\omega$.
  }
  Relations \cref{eq:Q-function-isom+,eq:Q-function-isom-} also follow as in \cite[Th.~9.2]{RS17}.  Finally, \cref{eq:Qpm-hard-commute} follows from \cref{lem:commutation-identity-decomp}.
\end{proof}

We can now complete the proof of \cref{theo:categorification}.  Using the local relations, an inductive argument implies that all closed diagrams in $\Heis_{F,k}$ can be written as linear combinations of products of bubbles (clockwise or counterclockwise circles with dots and tokens).  By \cref{eq:inf-grass1,eq:inf-grass2,eq:inf-grass3}, all clockwise bubbles can be written in terms of counterclockwise bubbles.  Then it follows from \cref{eq:inf-grass2} that all closed diagrams in $\Heis_{F,k}$ have nonnegative degree.  Furthermore, the condition on $R$ in the final sentence of \cref{theo:categorification} implies that all closed diagrams of degree zero are scalar multiples of the identity diagram.  Then the theorem follows from the arguments of \cite[\S10]{RS17}.

\subsection*{Acknowledgements}

The author would like to thank J.\ Brundan, J.\ Kujawa, and A.\ Licata for helpful conversations.


\bibliographystyle{alphaurl}
\bibliography{Savage-Frobenius-Heisenberg-categorification}

\end{document}